\numberwithin{equation}{section}
\newcommand{\Beq}{\begin{equation}}
	\newcommand{\Eeq}{\end{equation}}
\newcommand{\beq}{\begin{equation*}}
	\newcommand{\eeq}{\end{equation*}}
\newcommand{\bal}{\begin{align}}
	\newcommand{\eal}{\end{align}}
\newcommand{\bp}{\begin{prob}}
	\newcommand{\ep}{\end{prob}}
\newcommand{\bpr}{\begin{proof}}
	\newcommand{\epr}{\end{proof}}
\newcommand{\bel}[1]{\begin{equation}\label{#1}}
	\newcommand{\ee}{\end{equation}}
\newtheorem{theorem}{Theorem}[section]
\theoremstyle{definition}
\newtheorem{assumption}[theorem]{Assumption}
\newtheorem{remark}[theorem]{Remark}
\newtheorem{defn}[theorem]{Definition}
\begin{document}

\title[Structurally damped semilinear evolution equation]
{Structurally damped semilinear evolution equation for positive operators on Hilbert space}
\author[Aparajita Dasgupta]{Aparajita Dasgupta}
\address{
	Aparajita Dasgupta:
	\endgraf
	Department of Mathematics
	\endgraf
	Indian Institute of Technology, Delhi, Hauz Khas
	\endgraf
	New Delhi-110016, India
	\endgraf
	{\it E-mail address:} {\rm adasgupta@maths.iitd.ac.in}
}
\author[Lalit Mohan]{Lalit Mohan}
\address{
	Lalit Mohan:
	\endgraf
	Department of Mathematics
	\endgraf
	Indian Institute of Technology, Delhi, Hauz Khas
	\endgraf
	New Delhi-110016, India
	\endgraf
	{\it E-mail address:} {\rm mohanlalit871@gmail.com}
}
\author[Abhilash Tushir]{Abhilash Tushir}
\address{
Abhilash Tushir:
	\endgraf
	Tata Institute of Fundamental Research
    \endgraf
    Centre For Applicable Mathematics
    \endgraf Bangalore, Karnataka-560065, India
	\endgraf
    {\it E-mail address:} {\rm abhilash2296@gmail.com}
}

\date{\today}
\subjclass{35L05, 35B40, 35L70, 35L71,47J35.}
\keywords{Damped evolution equation, Hilbert space, Decay estimates, Global well-posedness .}

\begin{abstract}
In this study, we analyze the following semilinear damped evolution equation associated with a self-adjoint, positive operator $\mathcal{L}$ on a separable Hilbert space $\mathcal{H}$ with discrete spectrum which, reads as follows:
\begin{equation*}
	\left\{\begin{array}{l}
		u_{tt}(t)+\mathcal{L}^{\theta}u_{t}(t)+\mathcal{L}^{\sigma}u(t)
=0, \quad t>0, \\
		u(0)=u_{0}\in\mathcal{H},\quad
		 u_{t}(0)=u_{1}\in\mathcal{H},
	\end{array}\right.
\end{equation*}
under different damping conditions, including the undamped $(\theta=0)$, effectively damped $(0<2\theta<\sigma)$, critically damped $(2\theta=\sigma)$,  non-effectively damped $(\sigma<2\theta<2\sigma)$, and viscoelastic type damped $(\theta=\sigma)$. This  article is devoted to examining the decay estimates of solutions to the above linear evolution equation subject to initial Cauchy data with Sobolev regularity.  
Furthermore, we also observe loss of decay with relaxation of regularity in some case.
As an application of the decay estimates, we also demonstrate the global existence (in time) of the solution in certain cases, taking into account polynomial-type nonlinearity. 
 \end{abstract}

\maketitle
\tableofcontents
\section{Introduction}\label{intro}
The study of damped evolution equations has become an increasingly prominent area of interest due to their applicability in modeling physical systems exhibiting energy dissipation—such as viscoelastic materials, structural mechanics, and wave propagation with damping. In particular, the interplay between damping mechanisms and nonlinear effects significantly influences the qualitative behavior of solutions, such as decay, stability, and long-term dynamics.

The linear wave equation with various damping conditions has been thoroughly investigated in the Euclidean setting, and an extensive theory based on well-posedness, dispersive estimates, and long-time behavior of solutions was developed. But when one goes beyond the Euclidean framework and examines evolution equations regulated by positive operators in abstract Hilbert spaces, there are many articles for the undamped case, but only a limited number of studies can be identified for the damping cases. More recent developments include sharp decay estimates and global well-posedness in various damping regimes (e.g., \cite{Wak01, Nar12, DE16, Ruz16,mic-wave1,mic-wave2, palmieri-1,palmieri-2}), including operators on manifolds, Lie groups, and non-Euclidean settings.

Recently partial differential equations have been studied in more abstract and geometrically rich settings, such as on Lie groups, manifolds with complex topologies, or quantum mechanics. These settings naturally give rise to self-adjoint positive operators with discrete spectra, for which traditional Fourier analytic tools are not directly applicable. Moreover, in physical models such as quantum magnetism (e.g., the Landau Hamiltonian), or control theory on Lie groups, damping effects play an equally important role in the stability and energy dissipation of the system. Despite this need, most studies in the literature on damped evolution equations remain restricted to specific operators or geometries, and there is a lack of a unified operator-theoretic approach that can simultaneously handle a wide class of operators and damping structures.
This paper aims to fill that gap by developing a general framework for semilinear damped evolution equations governed by fractional powers of  self-adjoint, positive operators on Hilbert spaces.

In this paper, we undertake a rigorous analysis of semilinear evolution equations featuring structural damping modeled by fractional powers of a positive operator on a Hilbert space.
Our goal is to characterize between damping and nonlinear effects by deriving decay estimates for the linear problem and then using these results to establish global existence for the nonlinear model under sufficiently small initial Cauchy data in the possible cases. The approach relies on spectral techniques adapted to the discrete operator framework and a detailed classification of damping regimes, ranging from no damping to critical and non-effective damping scenarios.

We begin by providing the notion of fractional powers of positive operator Hilbert space to set up the stage for the main problem examined in this article.

Let $\mathcal{L}$ be a densely defined self-adjoint, linear, positive operator on a separable Hilbert space $\mathcal{H}$ with the discrete spectrum 
\begin{equation}
    \{\lambda_{\xi}\geq 0:\xi\in \mathcal{I}\},
\end{equation}
where $\mathcal{I}$ is a countable set. For any $\alpha>0$, the fractional power of $\mathcal{L}$ is defined as follows:
\begin{equation}
    \mathcal{L}^{\alpha}u(t)=\mathcal{F}_{\mathcal{L}}^{-1}(\lambda_{\xi}^{\alpha}\mathcal{F}_{\mathcal{L}}u(t,\xi)),
\end{equation}
where $\mathcal{F}_{\mathcal{L}}$ and $\mathcal{F}_{\mathcal{L}}^{-1}$ are the $\mathcal{L}$-Fourier transform and inverse $\mathcal{L}$-Fourier transform, respectively. More details will be supplied in Section \ref{sec:prelim}. \\
\textbf{Assumption on the operator $\mathcal{L}$:} The operator \(\mathcal{L} \) is positive, self-adjoint and it has a discrete spectrum \( \{ \lambda_\xi\geq 0: \xi\in\mathcal{I} \} \), and its eigenfunctions form an orthonormal basis of the Hilbert space \( \mathcal{H} \). Moreover, the spectrum doesn't have cluster points.

Considering an abstract point of view has a key advantage that the results will cover a wide range of models, including those arising in quantum mechanics, sub-Riemannian analysis etc.\\
\textbf{Examples of operator \(\mathcal{L} \):} The framework adopted in this paper covers a variety of important examples of operators \( \mathcal{L} \) with a discrete spectrum and orthonormal eigenfunctions:
\begin{itemize}
    \item the \textbf{Landau Hamiltonian} in 2D quantum mechanics;
    \item \textbf{harmonic and anharmonic oscillators} in physics;
    \item higher-dimensional \textbf{Hamiltonian operators};
    \item \textbf{sums of squares of vector fields} on compact Lie groups; and
    \item differential operators on \textbf{compact manifolds with boundary}.
\end{itemize}
Detailed analysis of such operators and their spectral properties can be found in \cite{mic-wave1,mic-wave2}. 

We consider the following fractional abstract damped wave equation associated with the operator $\mathcal{L}$ on the Hilbert space $\mathcal{H}$:
\begin{equation}\label{main:Hilbert}
	\left\{\begin{array}{l}
		u_{tt}(t)+\mathcal{L}^{\theta}u_{t}(t)+\mathcal{L}^{\sigma}u(t)
=f(u), \quad t>0, \\
		u(0)=u_{0}\in\mathcal{H},\quad
		 u_{t}(0)=u_{1}\in\mathcal{H},
	\end{array}\right.
\end{equation}
where  the exponents \( \theta \geq 0 \) and \( \sigma > 0 \) control the damping and stiffness, respectively, and $f$ satisfies the condition:
\begin{equation}
|f(u)-f(v)| \lesssim |u-v|\left(|u|^{p-1}+|v|^{p-1}\right), ~~p>1~~,\quad \text{and}\quad f(0)=0.
\end{equation}
A crucial classification introduced in \cite{classif} distinguishes between \emph{effective} and \emph{non-effective} damping. The damping is said to be:
\begin{itemize}
\item \textbf{undamped} if $\theta=0$;
    \item \textbf{effective} if \( 2\theta \in (0, \sigma) \);
     \item \textbf{critical (scale-invariant)} when \( 2\theta = \sigma \); 
    \item \textbf{non-effective} if \( 2\theta \in (\sigma, 2\sigma) \); and
    \item \textbf{viscoelastic type
damping} if $\theta=\sigma$.
\end{itemize}
The analysis of damped evolution equations with nonlinearities has deep roots in the theory of semilinear PDEs. The foundational work by H.~Fujita~\cite{Fuj66} on blow-up and global existence in semilinear heat equations introduced the concept of a critical (Fujita) exponent, determining whether small initial data yields global solutions. Similar threshold behavior is found in wave equations with damping, as shown in works such as \cite{Mat76, Gla77, Lev90}, which demonstrated how damping mechanisms can suppress nonlinear blow-up phenomena. 

This study explores all five cases, analyzing how decay rates and solution regularity differ depending on the damping effectiveness. 
The undamped case has been thoroughly investigated in both Euclidean and non-Euclidean settings; see \cite{GR2,mic-wave2,mic-wave1,palmieri-2}
The case of effective damping for decay is thoroughly investigated but mainly in Euclidean settings; see \cite{Tuan2019,DABBICCO2014,Narazaki2013}. For the critical/scale-invariant case, one can refer to \cite{PHAM2015}.
Furthermore, the authors in \cite{Ikheta2014,Abbico2022,Tuan2019a} investigate the decay estimates and long-time behavior of the solutions in the case of non-effective damping. The case of viscoelastic type damping is also considered in several settings; see \cite{Shyam:visco,Ikehata,IKEHATA2017228,LIU2021663}.


The novelty of this work lies in providing a unified and operator-theoretic framework to study semilinear evolution equations with structural damping governed by a general class of positive operators with discrete spectra. Unlike classical treatments limited to the Laplacian or bounded domains, this paper allows for more abstract settings, including operators with non-Euclidean geometry and spectral properties (e.g., the Landau Hamiltonian, sums of squares on Lie groups); see \cite{AD&LM&VK&SSM,Ruz16}.
Using an \( \mathcal{L} \)-adapted non-harmonic Fourier analysis, the authors derive decay estimates in each damping regime, revealing regularity-dependent loss in decay. Furthermore, the analysis extends to global existence results under sufficiently small Cauchy initial data, linking the linear decay behavior to nonlinear stability in a precise, quantifiable way. These results generalize and strengthen known decay and existence theorems in literature while providing tools applicable to a broad range of operators beyond traditional elliptic ones.

For the sake of reading simplicity, let’s first give a brief synopsis of the
paper’s organization. 
\begin{itemize}
\item \textbf{Section~\ref{Sec:mainresults}} states the main results, starting with decay estimates for the linear homogeneous problem under Sobolev regularity assumptions on initial data, then we will observe some loss of decay with relaxation in regularity. 

    \item \textbf{Section~\ref{sec:prelim}} develops the necessary analytical framework, including the definition of fractional powers of the operator \(\mathcal{L}\), the \(\mathcal{L}\)-Fourier transform, and function spaces adapted to \(\mathcal{L}\), such as Sobolev-type spaces \(\mathcal{H}^\alpha_{\mathcal{L}}, \alpha>0\). 
    
    \item \textbf{Section~\ref{section-4}} provides preliminary estimates that apply to both the damped and the undamped cases, and are used in the calculations that follow.
    
        \item \textbf{Section \ref{section-5}} provides the proofs of the main results along with necessary remarks.
    \item \textbf{Section \ref{section-7}} applies the linear decay estimates and a fixed-point argument to establish global (in time) existence for the semilinear problem with small initial data, accounting for nonlinear effects on long-term solution behavior.
\end{itemize}

\section{Main results}\label{Sec:mainresults} 
This section will provide the article's main results, we will start by stating the decay estimates of the linear Cauchy problem given by 
\begin{equation}\label{main:linear:Hilbert}
	\left\{\begin{array}{l}
		u_{tt}(t)+\mathcal{L}^{\theta}u_{t}(t)+\mathcal{L}^{\sigma}u(t)
		=0, \quad t>0, \\
		u(0)=u_{0}\in\mathcal{H},\quad
		u_{t}(0)=u_{1}\in\mathcal{H}.
	\end{array}\right.
\end{equation}

The following notations  will be consistently used throughout the paper for clarity and consistency.
\begin{itemize}
	\item $f \lesssim g:$\,\, There exists a positive constant $C$ such that $f \leq C g,$ where the constant will not be affected by the parameters that appear.
	\item  $\mathcal{H}^{\alpha}_{\mathcal{L}}$ : The Sobolev spaces of order $\alpha>0$ associated with the operator $\mathcal{L}$.
    \item $\|(u,v)\|_{\mathcal{D}}:$ For $\mathcal{D}=X\times Y$, where $X$ and $Y$ are normed linear space, we introduce the norm 
 \begin{equation*}
     \|(u,v)\|_{\mathcal{D}}:=\|u\|_{X}+\|v\|_{Y}.
 \end{equation*}
    \item The constant $\delta$ below may change from line to line in this paper. However, we don't continuously relabel it to prevent notational clutter. Only the presence of a positive $\delta$ independent of $\xi\in\mathcal{I}$ is necessary for our aims.
\end{itemize}
With the notational groundwork in place, we now highlight the main results of the article.
We start our analysis with the undamped case, which serves as a baseline for comparing the different damped regimes detailed later.
\begin{theorem}[\textbf{Undamped}]\label{thm:case0new}
    Let $\theta=0,~\sigma,\beta>0,$ and $k\in\mathbb{N}$. Let $u$ be the solution of the homogeneous Cauchy problem \eqref{main:linear:Hilbert}.
Then the solution $u$ satisfies the following estimates
\begin{align}
\|u(t)\|_{\mathcal{H}}&\lesssim\|u_{0}\|_{\mathcal{H}}+\|u_{1}\|_{\mathcal{H}},\nonumber\\
\|\mathcal{L}^{\beta}u(t)\|_{\mathcal{H}}&\lesssim\begin{cases}
    e^{-\delta t}(\|u_{0}\|_{\mathcal{H}^{2\beta}_{\mathcal{L}}}+\|u_{1}\|_{\mathcal{H}^{2\beta-\sigma}_{\mathcal{L}}})&\text{if } \quad 2\beta>\sigma,\\
e^{-\delta t}(\|u_{0}\|_{\mathcal{H}^{2\beta}_{\mathcal{L}}}+\|u_{1}\|_{\mathcal{H}})&\text{if } \quad2\beta\leq \sigma,
\end{cases} \quad \text{and}\nonumber\\
\|\partial_{t}^{k}u(t)\|_{\mathcal{H}}&\lesssim
e^{-\delta t}\left(\|u_{0}\|_{\mathcal{H}^{k\sigma}_{\mathcal{L}}}+\|u_{1}\|_{\mathcal{H}^{(k-1)\sigma}_{\mathcal{L}}}\right)\quad k\geq 1,
\nonumber
\end{align}
for some $\delta>0$ and for all $t>0$.
\end{theorem}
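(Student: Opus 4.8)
The plan is to diagonalize \eqref{main:linear:Hilbert} with the $\mathcal{L}$-Fourier transform. Since $\theta=0$ gives $\mathcal{L}^{\theta}=\mathrm{Id}$, setting $\widehat{u}(t,\xi)=\mathcal{F}_{\mathcal{L}}u(t,\xi)$ turns the problem into the family of scalar Cauchy problems
\[
\widehat{u}_{tt}(t,\xi)+\widehat{u}_{t}(t,\xi)+\lambda_{\xi}^{\sigma}\,\widehat{u}(t,\xi)=0,\qquad \widehat{u}(0,\xi)=\widehat{u_{0}}(\xi),\quad \widehat{u}_{t}(0,\xi)=\widehat{u_{1}}(\xi),\quad \xi\in\mathcal{I},
\]
with characteristic roots $\mu_{\pm}(\lambda_{\xi})=\tfrac12\big(-1\pm\sqrt{1-4\lambda_{\xi}^{\sigma}}\big)$. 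Writing $\widehat{u}(t,\xi)=\widehat{u_{0}}(\xi)\,\varphi_{0}(t,\xi)+\widehat{u_{1}}(\xi)\,\varphi_{1}(t,\xi)$ for the fundamental solutions ($\varphi_{0}(0,\xi)=1,\ \partial_{t}\varphi_{0}(0,\xi)=0$ and $\varphi_{1}(0,\xi)=0,\ \partial_{t}\varphi_{1}(0,\xi)=1$), Plancherel's identity and the realization of $\|\cdot\|_{\mathcal{H}^{\alpha}_{\mathcal{L}}}$ as the $\ell^{2}$-norm with weights $(1+\lambda_{\xi})^{\alpha}$ reduce everything to pointwise-in-$\xi$ bounds on $\varphi_{0},\varphi_{1}$ and their $t$-derivatives. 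I would split $\mathcal{I}$ into three parts according to the location of $\lambda_{\xi}$: the kernel $\lambda_{\xi}=0$; the ``low'' eigenvalues $0<\lambda_{\xi}^{\sigma}\le R$ for some fixed $R>1/4$, which is a \emph{finite} set since the spectrum has no cluster points; and the ``high'' eigenvalues $\lambda_{\xi}^{\sigma}>R$, where the roots are complex conjugate, $\mu_{\pm}=-\tfrac12\pm i\omega_{\xi}$ with $\omega_{\xi}=\tfrac12\sqrt{4\lambda_{\xi}^{\sigma}-1}$.

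On the kernel the solution is explicit: $\varphi_{0}(t,0)=1$, $\varphi_{1}(t,0)=1-e^{-t}$, so $|\varphi_{0}(t,0)|\le1$, $|\varphi_{1}(t,0)|\le t$, and $\partial_{t}^{k}\varphi_{0}(t,0)=0$, $|\partial_{t}^{k}\varphi_{1}(t,0)|=e^{-t}$ for $k\ge1$. On the finite low set, $\lambda_{\xi}$ is bounded and bounded away from $0$, so every characteristic root has real part $\le-\delta$ for a fixed $\delta>0$ (the extreme cases being $\operatorname{Re}\mu_{+}$ at the smallest positive eigenvalue and, if it occurs, the double root $-1/2$ with its harmless linear-in-$t$ prefactor); hence $\varphi_{0},\varphi_{1}$ and all their $t$-derivatives are $\lesssim e^{-\delta t}$ there, with the finitely many implied constants absorbed. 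The substantive part is the high regime, where $\varphi_{0}(t,\xi)=e^{-t/2}\big(\cos\omega_{\xi}t+\tfrac{1}{2\omega_{\xi}}\sin\omega_{\xi}t\big)$, $\varphi_{1}(t,\xi)=e^{-t/2}\,\tfrac{\sin\omega_{\xi}t}{\omega_{\xi}}$, and the two structural facts $|\mu_{+}(\lambda_{\xi})|=|\mu_{-}(\lambda_{\xi})|=\lambda_{\xi}^{\sigma/2}$ (from $\mu_{+}\mu_{-}=\lambda_{\xi}^{\sigma}$) and $\omega_{\xi}\ge c\,\lambda_{\xi}^{\sigma/2}$ with $c=\sqrt{1-1/(4R)}>0$ give, after differentiating $k$ times and estimating the coefficients $\tfrac{\lambda_{\xi}^{\sigma/2}}{2\omega_{\xi}}$ and $\tfrac{1}{2\omega_{\xi}}$ that appear,
\[
|\partial_{t}^{k}\varphi_{0}(t,\xi)|\lesssim e^{-t/2}\,\lambda_{\xi}^{k\sigma/2},\qquad |\partial_{t}^{k}\varphi_{1}(t,\xi)|\lesssim e^{-t/2}\,\lambda_{\xi}^{(k-1)\sigma/2},\qquad k\ge0.
\]
(These pointwise bounds are exactly of the type collected in Section~\ref{section-4}.)

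Reassembling via Plancherel then yields all three estimates. For the first, $\|\varphi_{0}(t,\cdot)\|_{\infty}\lesssim1$ and $\|\varphi_{1}(t,\cdot)\|_{\infty}\lesssim t$ uniformly in $\xi$, so $\|u(t)\|_{\mathcal{H}}\lesssim\|u_{0}\|_{\mathcal{H}}+t\|u_{1}\|_{\mathcal{H}}$. For $\|\mathcal{L}^{\beta}u(t)\|_{\mathcal{H}}$ the kernel contributes nothing (the weight $\lambda_{\xi}^{\beta}$ vanishes there), the low set contributes an exponentially small term dominated by the stated right-hand sides, and the high regime (case $k=0$) gives $\lambda_{\xi}^{2\beta}|\widehat{u}(t,\xi)|^{2}\lesssim e^{-t}\big(\lambda_{\xi}^{2\beta}|\widehat{u_{0}}(\xi)|^{2}+\lambda_{\xi}^{2\beta-\sigma}|\widehat{u_{1}}(\xi)|^{2}\big)$; summing yields the $2\beta>\sigma$ bound, while for $2\beta\le\sigma$ the exponent $2\beta-\sigma\le0$ is bounded above $R$ on the high set, so $\sum_{\lambda_{\xi}^{\sigma}>R}\lambda_{\xi}^{2\beta-\sigma}|\widehat{u_{1}}(\xi)|^{2}\lesssim\|u_{1}\|_{\mathcal{H}}^{2}$, giving the second case. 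The $\partial_{t}^{k}u$ estimate is the same, now using the $k$-th order bounds above, the kernel contributing only $e^{-t}|\widehat{u_{1}}(0)|\le e^{-t}\|u_{1}\|_{\mathcal{H}^{(k-1)\sigma}_{\mathcal{L}}}$.

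The main obstacle I expect is the transitional frequency band $\lambda_{\xi}^{\sigma}\approx1/4$: there the representation of the solution degenerates---$1/\omega_{\xi}$, equivalently $(1-4\lambda_{\xi}^{\sigma})^{-1/2}$, becomes large and a genuine $t\,e^{-t/2}$ term appears at the double root---so the high-frequency estimates above are unavailable near it. The point that makes this harmless, and the only essential use of the no-cluster-point hypothesis, is that this band meets the spectrum in at most finitely many eigenvalues, each of which simply contributes a fixed constant. Beyond this, the remaining work is the careful bookkeeping of powers of $\lambda_{\xi}$ needed to place the high-frequency sums in the spaces $\mathcal{H}^{2\beta}_{\mathcal{L}},\mathcal{H}^{2\beta-\sigma}_{\mathcal{L}},\mathcal{H}^{k\sigma}_{\mathcal{L}},\mathcal{H}^{(k-1)\sigma}_{\mathcal{L}}$, for which the identities $|\mu_{\pm}(\lambda_{\xi})|=\lambda_{\xi}^{\sigma/2}$ and $\omega_{\xi}\gtrsim\lambda_{\xi}^{\sigma/2}$ are the decisive inputs.
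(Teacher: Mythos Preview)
Your proposal is correct and follows essentially the same approach as the paper: diagonalize via the $\mathcal{L}$-Fourier transform, write down the explicit fundamental solutions, split the spectrum according to the position of $\lambda_\xi^\sigma$ relative to the discriminant threshold $1/4$, and reassemble with Plancherel. The paper uses a four-region partition ($\lambda_\xi^\sigma=0$, $0<\lambda_\xi^\sigma<\tfrac14$, $\lambda_\xi^\sigma=\tfrac14$, $\lambda_\xi^\sigma>\tfrac14$) and handles the second region via the elementary inequality $-1+\sqrt{1-4x}\le-2x$ together with the spectral gap away from $0$, whereas you collapse everything with $0<\lambda_\xi^\sigma\le R$ into one finite ``low'' set and argue directly by finiteness; this is a harmless simplification and the high-frequency analysis is identical. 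One small point: the paper's Sobolev norm is the homogeneous weight $\lambda_\xi^{s}$, not $(1+\lambda_\xi)^{s}$ as you wrote, so when bounding the kernel contribution $e^{-t}|\widehat{u_1}(\xi_0)|$ for $\lambda_{\xi_0}=0$ by $\|u_1\|_{\mathcal{H}^{(k-1)\sigma}_{\mathcal{L}}}$ you are implicitly using the inhomogeneous convention---the paper does the same absorption without comment, so this is consistent with how the result is stated.
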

\begin{remark}
    The decay estimates derived in above theorem strengthen those presented in \cite[Proposition 2.1]{palmieri-1} in the case of compact Lie group, facilitated due to  a more refined partition of $\mathcal{I}$.
\end{remark}
\begin{remark}\label{remrk2:mthm}
    The estimate $\|u(t)\|_{\mathcal{H}}$ witness a complete absence of decay even in the presence of Sobolev data  but the explanation  is somewhat technical and arises out of the Plancherel formula which is due to the possibility of vanishing $\lambda_{\xi}$ for some $\xi$; it will become clearer after the proof is presented.
\end{remark}
The following theorem provides the decay estimates for effective damping, critical damping, non-effective damping, and viscoelastic type damping
\begin{theorem}\label{thm:case0old}
    Let $\theta>0,\sigma,\beta>0,$ and $k\in\mathbb{N}$. Let $u$ be the solution of the homogeneous Cauchy problem \eqref{main:linear:Hilbert}.
Then 
\begin{enumerate}
    \item for $2\theta\in (0,\sigma]$, the solution $u$ satisfies the following estimates: 
    \begin{align}
\|u(t)\|_{\mathcal{H}}&\lesssim\|u_{0}\|_{\mathcal{H}}+t\|u_{1}\|_{\mathcal{H}},\nonumber\\
\|\mathcal{L}^{\beta}u(t)\|_{\mathcal{H}}&\lesssim\begin{cases}
    e^{-\delta t}(\|u_{0}\|_{\mathcal{H}^{2\beta}_{\mathcal{L}}}+\|u_{1}\|_{\mathcal{H}^{2\beta-\sigma}_{\mathcal{L}}})&\text{if } 2\beta>\sigma,\\
e^{-\delta t}(\|u_{0}\|_{\mathcal{H}^{2\beta}_{\mathcal{L}}}+\|u_{1}\|_{\mathcal{H}})&\text{if } 2\beta\leq\sigma,
\end{cases}\quad \text{and}\label{case0:spacedernorm new}\\
\|\partial_{t}^{k}u(t)\|_{\mathcal{H}}&\lesssim\begin{cases}
   e^{-\delta t}\|u_{0}\|_{\mathcal{H}^{\sigma}_{\mathcal{L}}}+\|u_{1}\|_{\mathcal{H}},&\text{if } k=1,\\
 e^{-\delta t}(\|u_{0}\|_{\mathcal{H}^{k\sigma}_{\mathcal{L}}}+\|u_{1}\|_{\mathcal{H}^{(k-1)\sigma}_{\mathcal{L}}}),&\text{if } k\geq 2.
\end{cases}
\label{case0:timedersoln:k>1 new}
\end{align}
\item for $2\theta\in (\sigma,2\sigma]$, the solution $u$ satisfies the following estimates
\begin{align}
\|u(t)\|_{\mathcal{H}}&\lesssim\|u_{0}\|_{\mathcal{H}}+t\|u_{1}\|_{\mathcal{H}},\nonumber\\
\|\mathcal{L}^{\beta}u(t)\|_{\mathcal{H}}&\lesssim\begin{cases}
    e^{-\delta t}(\|u_{0}\|_{\mathcal{H}^{2\beta}_{\mathcal{L}}}+\|u_{1}\|_{\mathcal{H}^{2\beta-\sigma}_{\mathcal{L}}})&\text{if } 2\beta>\sigma,\\
e^{-\delta t}(\|u_{0}\|_{\mathcal{H}^{2\beta}_{\mathcal{L}}}+\|u_{1}\|_{\mathcal{H}})&\text{if } 2\beta\leq\sigma,
\end{cases}\quad \text{and}\label{thm5:spaceder}\\
\|\partial_{t}^{k}u(t)\|_{\mathcal{H}}&\lesssim
\begin{cases}
    e^{-\delta t}\left\|u_{0}\right\|_{\mathcal{H}^{2\theta}_{\mathcal{L}}}+\|u_{1}\|_{\mathcal{H}},&\text{if } k=1,\\
 e^{-\delta t}(\|u_{0}\|_{\mathcal{H}^{2k\theta}_{\mathcal{L}}}+\|u_{1}\|_{\mathcal{H}^{2(k-1)\theta}_{\mathcal{L}}}),&\text{if } k\geq 2,
\end{cases}\label{thm5:timeder}
\end{align}
\end{enumerate}
for some $\delta>0$ and for all $t>0$.
\end{theorem}
Due to the absence of decay in $u$ and its first time derivative, even with the initial data exhibiting Sobolev regularity, additional structural assumptions are required to obtain improved decay estimates. Specifically, if the operator $\mathcal{L}$ is presumed to be strictly positive, or if a positive mass term  is incorporated such that $\mathcal{L}+m$ becomes strictly positive, then one may definitely establish exponential decay for the solution. This adjustment substantially eradicates the impact of appearing low-frequency components, which frequently account for the lack of decay in this setting. More precisely, the following theorem provides exponential with sufficiently regular Cauchy data and strictly positive operator:
\begin{theorem}\label{thm:expdecay}
         Let $\theta,\beta\geq0,\sigma>0,$ and $k\in\mathbb{N}$. Assume that $\mathcal{L}$ be a  self-adjoint and strictly positive operator and $u$ be the solution of the homogeneous Cauchy problem \eqref{main:linear:Hilbert}.
Then \begin{enumerate}
     \item for $2\theta\in [0,\sigma]$, the solution $u$ satisfies the following estimates \begin{equation}
\|\partial_{t}^{k}\mathcal{L}^{\beta} u(t)\|_{\mathcal{H}}\lesssim e^{-\delta t}(\|u_{0}\|_{\mathcal{H}_{\mathcal{L}}^{m_{k,\beta}}}+\|u_{1}\|_{\mathcal{H}_{\mathcal{L}}^{n_{k,\beta}}}),\quad k\geq 1,\beta\geq0,\nonumber
\end{equation}
where $ m_{k,\beta}=k\sigma+2\beta$ and $n_{k,\beta}=\max\{(k-1)\sigma,0\}+\begin{cases}
        2\beta-\sigma\quad &\text{if }2\beta>\sigma,\\
        0&\text{if } 2\beta\leq\sigma,
    \end{cases}$ and
    \item for $2\theta\in (\sigma,2\sigma]$, the solution $u$ satisfies the following estimates
    \begin{equation}
\|\partial_{t}^{k}\mathcal{L}^{\beta} u(t)\|_{\mathcal{H}}\lesssim e^{-\delta t}(\|u_{0}\|_{\mathcal{H}_{\mathcal{L}}^{m_{k,\beta}}}+\|u_{1}\|_{\mathcal{H}_{\mathcal{L}}^{n_{k,\beta}}}),\quad k\geq 1,\beta\geq0,\nonumber
\end{equation}
where $ m_{k,\beta}=2k\theta+2\beta$ and $n_{k,\beta}=\max\{2(k-1)\theta,0\}+\begin{cases}
        2\beta-\sigma\quad &\text{if }2\beta>\sigma,\\
        0&\text{if } 2\beta\leq\sigma.
    \end{cases}$
 \end{enumerate} 
\end{theorem}
\begin{remark}
   The decay estimates in the aforementioned theorem are also aligned with those found in \cite{mic-wave2}, given the explanation provided before stating the above theorem.
\end{remark}
As we can observe in the Theorem \ref{thm:case0old} and \ref{thm:expdecay}, a certain degree of Sobolev regularity in the initial data is typically necessary to obtain meaningful decay estimates on the solution. 
However, in the following theorem, it is observed that even when only the minimal regularity of the initial Cauchy data is utilized, i.e., 
$u_{0},u_{1}\in\mathcal{H}$, some decay can still be obtained in the cases of effective damping, critical damping, and non-effective damping.
 Let's begin with the case of effective damping:
\begin{theorem}[\textbf{Effective damping}]\label{thm:case1}
   Let $2\theta\in (0,\sigma)$, $\beta>0$, and $k\in\mathbb{N}$. Let $u$ be the solution of the following homogeneous Cauchy problem \eqref{main:linear:Hilbert},
then the solution $u$ satisfies the following estimates
\begin{align}
\|u(t)\|_{\mathcal{H}}&\lesssim\|u_{0}\|_{\mathcal{H}}+t\|u_{1}\|_{\mathcal{H}},\nonumber\\
\|\mathcal{L}^{\beta}u\|_{\mathcal{H}}&\lesssim \begin{cases}
        t^{-\frac{\beta}{\theta}}\|u_{0}\|_{\mathcal{H}}+t^{-\frac{2\beta-\sigma}{2\theta}}\|u_{1}\|_{\mathcal{H}}&\text{if } t\in (0,1],\\
        e^{-\delta t}\left(\|u_{0}\|_{\mathcal{H}}+\|u_{1}\|_{\mathcal{H}}\right)&\text{if } t\in [1,\infty),
    \end{cases}\quad \text{when} \quad 2\beta>\sigma,\nonumber\\
    \|\mathcal{L}^{\beta}u\|_{\mathcal{H}}&\lesssim \begin{cases}
        t^{-\frac{\beta}{\theta}}\|u_{0}\|_{\mathcal{H}}+e^{-\delta t}\|u_{1}\|_{\mathcal{H}}&\text{if } t\in (0,1],\\
        e^{-\delta t}\left(\|u_{0}\|_{\mathcal{H}}+\|u_{1}\|_{\mathcal{H}}\right)&\text{if } t\in [1,\infty),
    \end{cases}\quad \text{when} \quad 2\beta\leq\sigma,\nonumber\\
      \|\partial_{t}u\|_{\mathcal{H}}&\lesssim \begin{cases}
        t^{-\frac{\sigma}{2\theta}}\|u_{0}\|_{\mathcal{H}}+\|u_{1}\|_{\mathcal{H}}&\text{if } t\in (0,1],\\
        e^{-\delta t}\|u_{0}\|_{\mathcal{H}}+\|u_{1}\|_{\mathcal{H}}&\text{if } t\in [1,\infty),
    \end{cases}\quad \text{and}\nonumber\\
    \|\partial_{t}^{k}u(t)\|_{\mathcal{H}}&\lesssim
\begin{cases}
 t^{-\frac{k\sigma}{2\theta}}\|u_{0}\|_{\mathcal{H}}+t^{-\frac{(k-1)\sigma}{2\theta}}\|u_{1}\|_{\mathcal{H}}&\text{if } t\in (0,1],\\
   e^{-\delta t}\left(\|u_{0}\|_{\mathcal{H}}+\|u_{1}\|_{\mathcal{H}}\right)&\text{if } t\in [1,\infty),
\end{cases}\quad \text{when} \quad k\geq 2,\nonumber
\end{align}    
for some $\delta>0$ and for all $t>0$. 
\end{theorem}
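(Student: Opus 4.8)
\textbf{Proof plan for Theorem~\ref{thm:case1} (Effective damping).}
The plan is to diagonalize the problem via the $\mathcal{L}$-Fourier transform, so that for each $\xi\in\mathcal{I}$ the Cauchy problem \eqref{main:linear:Hilbert} becomes the scalar ODE
\begin{equation*}
\widehat{u}_{tt}(t,\xi)+\lambda_\xi^{\theta}\widehat{u}_t(t,\xi)+\lambda_\xi^{\sigma}\widehat{u}(t,\xi)=0,\qquad \widehat{u}(0,\xi)=\widehat{u}_0(\xi),\quad \widehat{u}_t(0,\xi)=\widehat{u}_1(\xi).
\end{equation*}
The characteristic roots are $\mu_{\pm}(\xi)=\tfrac12\bigl(-\lambda_\xi^{\theta}\pm\sqrt{\lambda_\xi^{2\theta}-4\lambda_\xi^{\sigma}}\bigr)$, and since $2\theta<\sigma$ the discriminant is negative precisely for large eigenvalues. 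I would split $\mathcal{I}$ into the ``low-frequency'' part $\mathcal{I}_{\mathrm{low}}=\{\xi:\lambda_\xi\le N\}$ (a \emph{finite} set, since the spectrum is discrete without cluster points) where the roots are real, and the ``high-frequency'' part $\mathcal{I}_{\mathrm{high}}=\{\xi:\lambda_\xi>N\}$ where $\mu_\pm=\tfrac12\bigl(-\lambda_\xi^{\theta}\pm \I\sqrt{4\lambda_\xi^{\sigma}-\lambda_\xi^{2\theta}}\bigr)$ with $\mathrm{Re}\,\mu_\pm=-\tfrac12\lambda_\xi^{\theta}$. The threshold $N$ is chosen so that these two regimes separate cleanly, and I expect the preliminary estimates of Section~\ref{section-4} to provide exactly the elementary bounds on $|\widehat{u}(t,\xi)|$, $|\partial_t^k\widehat{u}(t,\xi)|$ in terms of $\lambda_\xi$, $\widehat{u}_0(\xi)$, $\widehat{u}_1(\xi)$ that I will invoke here rather than rederiving.

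Next I would carry out the frequency-wise multiplier estimates. On $\mathcal{I}_{\mathrm{low}}$, the finite cardinality makes all quantities comparable up to constants depending on $N$: here $\lambda_\xi^{\beta}|\widehat{u}(t,\xi)|$ is controlled by the dominant root $\mu_+(\xi)\sim -\lambda_\xi^{\sigma-\theta}$ (from the expansion of the square root), which yields the exponential decay $e^{-\delta t}$ for $t\ge 1$, while for $t\in(0,1]$ one uses the trivial bound together with a Taylor expansion of $e^{\mu_\pm t}-1$ to extract the factors $t^{-\beta/\theta}$, etc.\ — in fact on a finite set these small-time ``singular'' powers are harmless and can be absorbed, so the real content of the $t\in(0,1]$ bounds comes from $\mathcal{I}_{\mathrm{high}}$. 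On $\mathcal{I}_{\mathrm{high}}$, writing the solution as
\begin{equation*}
\widehat{u}(t,\xi)=e^{-\frac12\lambda_\xi^{\theta}t}\Bigl(\widehat{u}_0(\xi)\cos(b_\xi t)+\tfrac{\widehat{u}_1(\xi)+\frac12\lambda_\xi^{\theta}\widehat{u}_0(\xi)}{b_\xi}\sin(b_\xi t)\Bigr),\qquad b_\xi=\tfrac12\sqrt{4\lambda_\xi^{\sigma}-\lambda_\xi^{2\theta}}\sim\lambda_\xi^{\sigma/2},
\end{equation*}
I would multiply by $\lambda_\xi^{\beta}$ (resp.\ differentiate $k$ times in $t$, which brings down factors of size $\lambda_\xi^{\theta}$ and $b_\xi\sim\lambda_\xi^{\sigma/2}$), and then optimize the scalar function $\lambda\mapsto \lambda^{\gamma}e^{-\frac12\lambda^{\theta}t}$ over $\lambda>N$. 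Its maximum over $\lambda>0$ is $\sim t^{-\gamma/\theta}$ attained at $\lambda\sim t^{-1/\theta}$; for $t\in(0,1]$ this maximizer may exceed $N$ so the bound $t^{-\gamma/\theta}$ is genuinely needed (with $\gamma=\beta$ for $\mathcal{L}^{\beta}u$, $\gamma=\beta-\sigma/2$ for the $\widehat{u}_1$ term, $\gamma=k\sigma/2$ and $(k-1)\sigma/2$ for $\partial_t^k u$), whereas for $t\ge1$ the maximizer lies below $N$ so on $\mathcal{I}_{\mathrm{high}}$ one simply has $e^{-\frac12\lambda_\xi^{\theta}t}\le e^{-\frac12 N^{\theta}t}=e^{-\delta t}$ times a fixed polynomial-in-$\lambda_\xi$ factor that is then re-absorbed using additional decay of $e^{-\frac12\lambda_\xi^{\theta}t/2}$. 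Summing the squares over $\xi$ and using Plancherel for the $\mathcal{L}$-Fourier transform converts these pointwise multiplier bounds into the stated $\mathcal{H}$-norm estimates; note that since the multiplier bounds are uniform in $\xi$ and no negative Sobolev norm of the data appears, only $\|u_0\|_{\mathcal{H}}$ and $\|u_1\|_{\mathcal{H}}$ enter, as claimed. The first estimate $\|u(t)\|_{\mathcal{H}}\lesssim\|u_0\|_{\mathcal{H}}+t\|u_1\|_{\mathcal{H}}$ is the crudest one: on $\mathcal{I}_{\mathrm{high}}$ the exponential prefactor gives a uniform bound, and on the finite set $\mathcal{I}_{\mathrm{low}}$ one uses $|e^{\mu_+ t}|\le 1$ and $|\tfrac{e^{\mu_+t}-e^{\mu_-t}}{\mu_+-\mu_-}|\lesssim t$ (valid even when $\mu_+=\mu_-$).

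The main obstacle I anticipate is the careful bookkeeping at and near the threshold $N$: one must verify that the discriminant $\lambda_\xi^{2\theta}-4\lambda_\xi^{\sigma}$ is bounded away from zero on both sides (so that $b_\xi$ does not degenerate and the division by $b_\xi$ in the $\sin$ term is legitimate, and so that on $\mathcal{I}_{\mathrm{low}}$ the two real roots $\mu_\pm$ stay separated except possibly at finitely many $\xi$ where a confluent $te^{\mu t}$ term appears). Since $\sigma>2\theta$, for $\lambda$ large $\lambda^{\sigma}$ dominates $\lambda^{2\theta}$ and the sign is as claimed, but the transition zone where $\lambda^{2\theta}\approx 4\lambda^{\sigma}$ — i.e.\ $\lambda\approx 4^{1/(\sigma-2\theta)}$ — is a \emph{bounded} region of the spectrum containing only finitely many eigenvalues, so any pathology there contributes only a finite sum and is harmless for the stated estimates. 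A secondary technical point is checking the equivalence of $\|\mathcal{L}^{\beta}u\|_{\mathcal{H}}$ with the $\ell^2$-norm of $\lambda_\xi^{\beta}\widehat{u}(t,\xi)$, i.e.\ that the expression is finite for the solution at positive times even when $u_0,u_1\in\mathcal{H}$ only — this is exactly where the small-time blow-up rates $t^{-\beta/\theta}$, $t^{-k\sigma/2\theta}$ come from and must be tracked, but it follows from the optimization above once the multiplier bounds are in hand.
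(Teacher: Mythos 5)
Your proposal follows essentially the same route as the paper's proof: diagonalize via the $\mathcal{L}$-Fourier transform, partition the spectrum according to the sign of the discriminant $\lambda_\xi^{2\theta}-4\lambda_\xi^{\sigma}$, bound the multipliers $\widehat{E}_0,\widehat{E}_1$ and their time derivatives using the optimization $\lambda^{\gamma}e^{-\frac12\lambda^{\theta}t}\lesssim t^{-\gamma/\theta}$ for $t\in(0,1]$ together with a uniform exponential bound for $t\ge 1$, and conclude by Plancherel. The one point to correct is your justification on $\mathcal{I}_{\mathrm{low}}$: the set $\{\xi:\lambda_\xi\le N\}$ need not be finite (eigenvalues may have infinite multiplicity, as for the Landau Hamiltonian listed among the paper's examples), but your bounds there survive because only finitely many \emph{distinct} eigenvalues lie below $N$ and the spectrum has no cluster points, so the relevant constants are uniform in $\xi$ --- which is exactly how the paper argues.
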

The following theorem provides the decay estimates in the case of critical damping.
\begin{theorem}[\textbf{Critical damping}]\label{thm:case2}
Let $2\theta = \sigma$, $\beta>0$, and $k\in\mathbb{N}$. Let $u$ be the solution of the following homogeneous Cauchy problem \eqref{main:linear:Hilbert},
then the solution $u$ satisfies the following estimates
\begin{align}
\|u(t)\|_{\mathcal{H}}&\lesssim\|u_{0}\|_{\mathcal{H}}+t\|u_{1}\|_{\mathcal{H}},\nonumber\\
\|\mathcal{L}^{\beta}u\|_{\mathcal{H}}&\lesssim \begin{cases}
        t^{-\frac{\beta}{\theta}}\|u_{0}\|_{\mathcal{H}}+t^{-\frac{\beta-\theta}{\theta}}\|u_{1}\|_{\mathcal{H}}&\text{if } t\in (0,1],\\
        e^{-\delta t}\left(\|u_{0}\|_{\mathcal{H}}+\|u_{1}\|_{\mathcal{H}}\right)&\text{if } t\in [1,\infty),
    \end{cases}\quad \text{when} \quad \beta>\theta,\nonumber\\
    \|\mathcal{L}^{\beta}u\|_{\mathcal{H}}&\lesssim \begin{cases}
        t^{-\frac{\beta}{\theta}}\|u_{0}\|_{\mathcal{H}}+e^{-\delta t}\|u_{1}\|_{\mathcal{H}}&\text{if } t\in (0,1],\\
        e^{-\delta t}\left(\|u_{0}\|_{\mathcal{H}}+\|u_{1}\|_{\mathcal{H}}\right)&\text{if } t\in [1,\infty),
    \end{cases}\quad \text{when} \quad \beta\leq\theta,\nonumber\\
      \|\partial_{t}u\|_{\mathcal{H}}&\lesssim \begin{cases}
    t^{-1}\|u_{0}\|_{\mathcal{H}}+\|u_{1}\|_{\mathcal{H}}&\text{if } t\in (0,1],\\
        e^{-\delta t}\|u_{0}\|_{\mathcal{H}}+\|u_{1}\|_{\mathcal{H}}&\text{if } t\in [1,\infty),
    \end{cases}\quad \text{and}\nonumber\\
    \|\partial_{t}^{k}u(t)\|_{\mathcal{H}}&\lesssim
\begin{cases}
 t^{-k}\|u_{0}\|_{\mathcal{H}}+t^{-(k-1)}\|u_{1}\|_{\mathcal{H}}&\text{if } t\in (0,1],\\
   e^{-\delta t}\left(\|u_{0}\|_{\mathcal{H}}+\|u_{1}\|_{\mathcal{H}}\right)&\text{if } t\in [1,\infty),
\end{cases}\quad \text{when} \quad k\geq 2,\nonumber
\end{align}    
for some $\delta>0$ and for all $t>0$.
\end{theorem}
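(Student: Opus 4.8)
The plan is to diagonalise the equation by the $\mathcal L$-Fourier transform and reduce it to a family of scalar ODEs. Writing $\wh u(t,\xi)=\mathcal{F}_{\mathcal{L}}u(t,\xi)$, problem \eqref{main:linear:Hilbert} becomes, for each $\xi\in\mathcal I$,
\[
\wh u_{tt}(t,\xi)+\lambda_\xi^{\theta}\wh u_t(t,\xi)+\lambda_\xi^{\sigma}\wh u(t,\xi)=0,\qquad \wh u(0,\xi)=\wh u_0(\xi),\quad \wh u_t(0,\xi)=\wh u_1(\xi).
\]
In the critical case $\sigma=2\theta$ the characteristic polynomial $\mu^2+\lambda_\xi^{\theta}\mu+\lambda_\xi^{2\theta}$ has discriminant $-3\lambda_\xi^{2\theta}$, so for $\lambda_\xi>0$ its roots are $\mu_\pm(\xi)=\tfrac{-1\pm\mathrm{i}\sqrt3}{2}\,\lambda_\xi^{\theta}$, which always satisfy $\operatorname{Re}\mu_\pm(\xi)=-\tfrac12\lambda_\xi^{\theta}$ and $|\mu_\pm(\xi)|=\lambda_\xi^{\theta}$. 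Hence for $\lambda_\xi>0$,
\[
\wh u(t,\xi)=e^{-\frac12\lambda_\xi^{\theta}t}\left(\wh u_0(\xi)\cos\!\Big(\tfrac{\sqrt3}{2}\lambda_\xi^{\theta}t\Big)+\frac{\wh u_1(\xi)+\tfrac12\lambda_\xi^{\theta}\wh u_0(\xi)}{\tfrac{\sqrt3}{2}\lambda_\xi^{\theta}}\sin\!\Big(\tfrac{\sqrt3}{2}\lambda_\xi^{\theta}t\Big)\right),
\]
while for $\lambda_\xi=0$ one has $\wh u(t,\xi)=\wh u_0(\xi)+t\,\wh u_1(\xi)$. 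Using $|\sin x|\le\min\{1,|x|\}$, both formulas give $|\wh u(t,\xi)|\lesssim|\wh u_0(\xi)|+t\,|\wh u_1(\xi)|$ for every $\xi$, which by Plancherel yields the first estimate.

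For the remaining bounds I would differentiate this representation in $t$, multiply by $\lambda_\xi^\beta$ or apply $\partial_t^k$ under the Fourier transform, and estimate the resulting multipliers pointwise in $\xi$ before summing by Plancherel. Since $e^{-\frac12\lambda_\xi^\theta t}\cos(\tfrac{\sqrt3}{2}\lambda_\xi^\theta t)$ and $e^{-\frac12\lambda_\xi^\theta t}\sin(\tfrac{\sqrt3}{2}\lambda_\xi^\theta t)$ are the real and imaginary parts of $e^{\mu_+(\xi)t}$ and $|\mu_\pm(\xi)|=\lambda_\xi^\theta$, each $t$-derivative produces exactly one extra factor $\lambda_\xi^\theta$, and after $k$ differentiations the multiplier is bounded by $\lambda_\xi^{k\theta}e^{-\frac12\lambda_\xi^\theta t}$ times a bounded trigonometric factor. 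Thus every multiplier that occurs is a finite sum of terms of the form $C\,\lambda_\xi^{\gamma}e^{-\frac12\lambda_\xi^\theta t}$ (times something bounded), with $\gamma\in\{\beta,\beta-\theta\}$ for $\mathcal L^\beta u$ — the $-\theta$ coming from the $\lambda_\xi^\theta$ in the denominator of the $\wh u_1$-coefficient — and $\gamma\in\{k\theta,(k-1)\theta\}$ for $\partial_t^k u$. The elementary facts I would record first (as part of the preliminary estimates of Section~\ref{section-4}) are: $\sup_{a\ge0}a^{s}e^{-at/2}\lesssim t^{-s}$ for all $s>0$, $t>0$; and for $a\ge a_0>0$, $t\ge1$, $a^{s}e^{-at/2}\lesssim e^{-\delta t}$ with $\delta=a_0/4$. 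Taking $a=\lambda_\xi^\theta$, $s=\gamma/\theta$ in the first fact gives the polynomial rates on $(0,1]$ (namely $t^{-\beta/\theta}$, $t^{-(\beta-\theta)/\theta}=t^{-(2\beta-\sigma)/(2\theta)}$, $t^{-k}$, $t^{-(k-1)}$), while the second fact gives the exponential decay on $[1,\infty)$; here $a_0:=\inf\{\lambda_\xi^\theta:\lambda_\xi>0\}>0$ because the spectrum of $\mathcal L$ has no cluster points, which makes $\delta$ independent of $\xi$.

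Two points need care. When $\gamma<0$ — precisely the subcase $\beta\le\theta$ for the $\wh u_1$-part of $\mathcal L^\beta u$ — the factor $\lambda_\xi^{\gamma}$ blows up near $\lambda_\xi=0$, but that term is supported on $\{\lambda_\xi>0\}$, where $\lambda_\xi^\theta\ge a_0$; hence $\lambda_\xi^{\gamma}e^{-\frac12\lambda_\xi^\theta t}\le a_0^{\gamma/\theta}e^{-\frac12 a_0 t}$ and the term is $\lesssim e^{-\delta t}\|u_1\|_{\mathcal H}$ for \emph{all} $t>0$, which is exactly why the $\|u_1\|_{\mathcal H}$-coefficient is exponential rather than polynomial in the $\beta\le\theta$ estimate. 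Secondly, the $\lambda_\xi=0$ mode contributes $\partial_t^k(\wh u_0(\xi)+t\wh u_1(\xi))$, i.e. $\wh u_1(\xi)$ when $k=1$ and $0$ when $k\ge2$; this is the source of the bare $\|u_1\|_{\mathcal H}$ summand in the $\partial_t u$ estimate and of the absence of any $u_1$-growth term for $\partial_t^k u$ with $k\ge2$. Summing the pointwise multiplier bounds in $\xi$ via Plancherel then produces all the asserted inequalities.

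The main obstacle is organisational rather than conceptual: one must carry out the $t$-differentiations of the explicit representation carefully enough to be certain that each derivative costs exactly one power of $\lambda_\xi^\theta$ (so that no spurious growth creeps in), and one must track which terms are supported away from $\lambda_\xi=0$ in order to separate polynomial from exponential behaviour exactly as stated. In fact the critical regime is the most comfortable of the damped cases, because $\mu_\pm(\xi)$ are complex of a fixed shape for every positive eigenvalue, so — unlike the effective and non-effective regimes — there is no need to partition $\mathcal I$ according to the sign of a discriminant.
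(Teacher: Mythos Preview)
Your proposal is correct and follows essentially the same approach as the paper: the same two-region partition $\{\lambda_\xi=0\}\cup\{\lambda_\xi>0\}$, the same explicit representation via $\mu_\pm=\tfrac{-1\pm i\sqrt3}{2}\lambda_\xi^\theta$, the same multiplier bounds $\lambda_\xi^\gamma e^{-\frac12\lambda_\xi^\theta t}\lesssim t^{-\gamma/\theta}$ (via $\sup_a a^s e^{-at/2}\lesssim t^{-s}$) on $(0,1]$ and $\lesssim e^{-\delta t}$ on $[1,\infty)$ (via the spectral gap $a_0>0$), and the same treatment of the exceptional cases $\beta\le\theta$ and $k=1$. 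You even anticipate the paper's observation that the critical regime avoids the discriminant-based partition needed in the other damped cases.
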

At first look, the decay estimates in the critical scenario  may appear to follow immediately from the study of the effective damping scenario. This is not the case, though, and the critical regime needs to be handled independently.
Interestingly, the calculations required for the critical situation are far easier than those needed for the effective and non-effective damping cases, even though an independent handling is still necessary. The next theorem provides the decay estimates for the case of non-effective damping:
\begin{theorem}[\textbf{Non-effective damping}]\label{thm:case3}
Let $2\theta\in (\sigma,2\sigma)$, $\beta>0$, and $k\in\mathbb{N}$. Let $u$ be the solution of the following homogeneous Cauchy problem \eqref{main:linear:Hilbert},
then the solution $u$ satisfies the following estimates
\begin{align}
\|u(t)\|_{\mathcal{H}}&\lesssim\|u_{0}\|_{\mathcal{H}}+t\|u_{1}\|_{\mathcal{H}},\nonumber\\
\|\mathcal{L}^{\beta}u\|_{\mathcal{H}}&\lesssim \begin{cases}
        t^{-\frac{\beta}{\sigma-\theta}}\|u_{0}\|_{\mathcal{H}}+t^{-\frac{2\beta-\sigma}{2\sigma-2\theta}}\|u_{1}\|_{\mathcal{H}}&\text{if } t\in (0,1],\\
        e^{-\delta t}\left(\|u_{0}\|_{\mathcal{H}}+\|u_{1}\|_{\mathcal{H}}\right)&\text{if } t\in [1,\infty),
    \end{cases}\quad \text{when} \quad 2\beta>\sigma,\nonumber\\
    \|\mathcal{L}^{\beta}u\|_{\mathcal{H}}&\lesssim \begin{cases}
        t^{-\frac{\beta}{\sigma-\theta}}\|u_{0}\|_{\mathcal{H}}+e^{-\delta t}\|u_{1}\|_{\mathcal{H}}&\text{if } t\in (0,1],\\
        e^{-\delta t}\left(\|u_{0}\|_{\mathcal{H}}+\|u_{1}\|_{\mathcal{H}}\right)&\text{if } t\in [1,\infty),
    \end{cases}\quad \text{when} \quad 2\beta\leq\sigma,\nonumber\\
      \|\partial_{t}u\|_{\mathcal{H}}&\lesssim \begin{cases}
        t^{-\frac{\theta}{\sigma-\theta}}\|u_{0}\|_{\mathcal{H}}+\|u_{1}\|_{\mathcal{H}}&\text{if } t\in (0,1],\\
        e^{-\delta t}\|u_{0}\|_{\mathcal{H}}+\|u_{1}\|_{\mathcal{H}}&\text{if } t\in [1,\infty),
    \end{cases}\quad \text{and}\nonumber\\
    \|\partial_{t}^{k}u(t)\|_{\mathcal{H}}&\lesssim
\begin{cases}
 t^{-\frac{k\theta}{\sigma-\theta}}\|u_{0}\|_{\mathcal{H}}+t^{-\frac{(k-1)\theta}{\sigma-\theta}}\|u_{1}\|_{\mathcal{H}}&\text{if } t\in (0,1],\\
   e^{-\delta t}\left(\|u_{0}\|_{\mathcal{H}}+\|u_{1}\|_{\mathcal{H}}\right)&\text{if } t\in [1,\infty),
\end{cases}\quad \text{when} \quad k\geq 2,\nonumber
\end{align}    
for some $\delta>0$ and for all $t>0$.
\end{theorem}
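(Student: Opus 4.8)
The plan is to diagonalize the problem via the $\mathcal{L}$-Fourier transform: writing $\hat u(t,\xi) = \mathcal{F}_\mathcal{L} u(t,\xi)$, the PDE \eqref{main:linear:Hilbert} decouples into a family of scalar ODEs
\begin{equation*}
\hat u_{tt}(t,\xi) + \lambda_\xi^{\theta}\, \hat u_t(t,\xi) + \lambda_\xi^{\sigma}\, \hat u(t,\xi) = 0, \qquad \hat u(0,\xi)=\hat u_0(\xi),\quad \hat u_t(0,\xi)=\hat u_1(\xi),
\end{equation*}
indexed by the eigenvalues $\lambda_\xi$. By the Plancherel identity for the $\mathcal{L}$-Fourier transform, controlling $\|\mathcal{L}^\beta u(t)\|_{\mathcal H}$ and $\|\partial_t^k u(t)\|_{\mathcal H}$ reduces to getting uniform-in-$\xi$ pointwise bounds on $\lambda_\xi^\beta |\hat u(t,\xi)|$ and $|\partial_t^k \hat u(t,\xi)|$ in terms of $|\hat u_0(\xi)|$, $|\hat u_1(\xi)|$ with the stated powers of $t$. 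The characteristic roots are $\mu_\pm(\xi) = \tfrac12\big(-\lambda_\xi^\theta \pm \sqrt{\lambda_\xi^{2\theta} - 4\lambda_\xi^\sigma}\big)$, and since we are in the non-effective regime $\sigma < 2\theta \le 2\sigma$ we have $\lambda_\xi^{2\theta - \sigma} = \lambda_\xi^\theta \cdot \lambda_\xi^{\theta-\sigma}$ large precisely when $\lambda_\xi$ is large (as $2\theta-\sigma>0$), so the discriminant $\lambda_\xi^{2\theta}-4\lambda_\xi^\sigma$ is positive for large $\lambda_\xi$ and negative for small $\lambda_\xi$. This dictates a partition of the index set $\mathcal I$ into three pieces: a bounded ``zero/small frequency'' part where $\lambda_\xi$ lies near $0$ (finitely many $\xi$, by the no-cluster-point assumption — here one just uses continuity/boundedness of the solution operator on a compact set, giving the crude bound $\|u_0\|_{\mathcal H}+t\|u_1\|_{\mathcal H}$), an ``elliptic/large frequency'' part $\lambda_\xi^{2\theta-\sigma} > 4$ where both roots are real and negative, and possibly a ``middle'' bounded part.

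The main work is in the large-frequency (over-damped) region. There the two real roots satisfy, by a Taylor expansion of the square root, $\mu_-(\xi) \sim -\lambda_\xi^\theta$ (the ``fast'' mode, strongly decaying) and $\mu_+(\xi) \sim -\lambda_\xi^{\sigma-\theta}$ (the ``slow'' mode, since $\mu_+\mu_- = \lambda_\xi^\sigma$ forces $\mu_+ \sim -\lambda_\xi^{\sigma}/\lambda_\xi^\theta = -\lambda_\xi^{\sigma-\theta}$, and $\sigma-\theta>0$ here so this still decays). Solving the ODE explicitly,
\begin{equation*}
\hat u(t,\xi) = \frac{\mu_+ e^{\mu_- t} - \mu_- e^{\mu_+ t}}{\mu_+-\mu_-}\,\hat u_0(\xi) + \frac{e^{\mu_+ t} - e^{\mu_- t}}{\mu_+ - \mu_-}\,\hat u_1(\xi),
\end{equation*}
I would bound each coefficient separately. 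Multiplying by $\lambda_\xi^\beta$ and using $|\mu_+ - \mu_-| \sim \lambda_\xi^\theta$, $|\mu_+| \sim \lambda_\xi^{\sigma-\theta}$, one sees the dominant contribution comes from the slow mode $e^{\mu_+ t}$, so $\lambda_\xi^\beta|\hat u(t,\xi)|$ is controlled by $\lambda_\xi^\beta e^{-c\lambda_\xi^{\sigma-\theta} t}|\hat u_0(\xi)| + \lambda_\xi^{\beta-\sigma}e^{-c\lambda_\xi^{\sigma-\theta}t}\big(\lambda_\xi^{\sigma-\theta}+\lambda_\xi^\theta\big)^{?}\cdots$ — the precise bookkeeping needs care, but the mechanism is clear. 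Then the elementary scalar inequality $s^a e^{-cst} \le C_a t^{-a/(\sigma-\theta)}$ for $s = \lambda_\xi^{\sigma-\theta}\ge 0$, $t>0$, $a\ge 0$, converts the exponential weight into the polynomial rate $t^{-\beta/(\sigma-\theta)}$ for the $u_0$-term, and similarly $\lambda_\xi^{\beta - \sigma}\cdot\lambda_\xi^{\sigma-\theta}e^{-c\lambda_\xi^{\sigma-\theta}t} = \lambda_\xi^{\beta-\theta}e^{-c\lambda_\xi^{\sigma-\theta}t}$ gives $t^{-(2\beta-\sigma)/(2(\sigma-\theta))}$ for the $u_1$-term (noting $(\beta-\theta)/(\sigma-\theta) = (2\beta-2\theta)/(2\sigma-2\theta)$ — I'd need to reconcile this with the stated exponent $(2\beta-\sigma)/(2\sigma-2\theta)$, likely the $u_1$ coefficient actually carries an extra $\lambda_\xi^{-\theta}$ relative to what I just wrote, coming from the $1/(\mu_+-\mu_-)$ factor, turning $\lambda_\xi^{\beta}$ into $\lambda_\xi^{\beta-\theta}$ but then the surviving power after using one factor of the exponential is $\lambda_\xi^{\beta - \theta}$, and $(\beta-\theta)/(\sigma-\theta)$ should equal $(2\beta-\sigma)/(2\sigma-2\theta)$ only when $2\theta=\sigma$, so the small-frequency vs large-frequency split must be arranged so that on the large part $\lambda_\xi\ge 1$ and a worse power can be absorbed). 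For $t\in[1,\infty)$, on the large-frequency part $\lambda_\xi^{\sigma-\theta}$ is bounded below by a positive constant, so $e^{-c\lambda_\xi^{\sigma-\theta}t}\le e^{-c't}$ and all polynomial weights $\lambda_\xi^{\beta}$, $\lambda_\xi^{\beta-\sigma}$ etc. are absorbed into the exponential $e^{-\delta t}$ — this gives the $[1,\infty)$ rows. The $\partial_t^k$ estimates follow by differentiating the explicit formula, which brings down factors $\mu_\pm^k$; the slow mode contributes $\mu_+^k \sim \lambda_\xi^{k(\sigma-\theta)}$ and after using the exponential this produces $t^{-k\theta/(\sigma-\theta)}$ for the $u_0$-term (here one extra division by $\mu_+-\mu_-\sim\lambda_\xi^\theta$ relative to the naive count, giving net power $\lambda_\xi^{k(\sigma-\theta)-\theta+\theta}=\lambda_\xi^{k(\sigma-\theta)}$... again the bookkeeping is the crux), and analogously $t^{-(k-1)\theta/(\sigma-\theta)}$ for $u_1$. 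Many of these steps are shared with Sections~\ref{section-4} and the effective-damping proof in Section~\ref{section-5}, so I would invoke those preliminary estimates wherever the root asymptotics coincide.

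I expect the \emph{main obstacle} to be the precise asymptotic analysis of $\mu_\pm(\xi)$ and of the ODE-solution coefficients uniformly across the transition zone — i.e., handling the $\xi$ for which $\lambda_\xi^{2\theta-\sigma}$ is comparable to $4$, where the discriminant nearly vanishes and the naive bounds by $1/|\mu_+-\mu_-|$ blow up. The standard remedy is to note this transition set is \emph{finite} (again by the no-cluster-point hypothesis on the spectrum), treat it by brute force as a finite-dimensional system with continuous, hence bounded, solution operator on compact time intervals (and exponentially decaying for large $t$ since all eigenvalues of the generator have negative real part, there being no $\lambda_\xi=0$ in this block for $\sigma<2\theta$ unless $\lambda_\xi=0$ which sits in the crude block), and thereby absorb it into the constants. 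The second delicate point is getting the \emph{exponents} exactly as stated rather than slightly weaker powers; this requires being careful about which of $\mu_+$, $\mu_-$, and $\mu_+-\mu_-$ appears in each coefficient and extracting only as many factors of the exponential as needed, keeping the rest as a bounded quantity on the relevant frequency block — a routine but error-prone optimization that I would carry out term by term, cross-checking against the critical case $2\theta=\sigma$ (Theorem~\ref{thm:case2}) as a sanity limit and against the effective case (Theorem~\ref{thm:case1}) under the formal substitution $\sigma-\theta\leftrightarrow\theta$.
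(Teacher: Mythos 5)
Your proposal follows essentially the same route as the paper's proof: diagonalize via the $\mathcal{L}$-Fourier transform and Plancherel, partition $\mathcal{I}$ by the size of $\lambda_\xi^{2\theta-\sigma}$ relative to $4$ (the paper uses $\mathcal{R}_1,\dots,\mathcal{R}_4$ for $\lambda_\xi^{2\theta-\sigma}=0$, $\in(0,4)$, $=4$, $>4$), do the main work on the over-damped block where the slow root is $\sim-\lambda_\xi^{\sigma-\theta}$, convert $\lambda_\xi^{a}e^{-c\lambda_\xi^{\sigma-\theta}t}$ into $t^{-a/(\sigma-\theta)}$ on $(0,1]$ and into $e^{-\delta t}$ on $[1,\infty)$ via the spectral gap, and absorb the bounded-frequency and transition blocks using the no-cluster-point hypothesis, exactly as in the paper. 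The one exponent you flagged as needing reconciliation does work out: on $\mathcal{R}_4$ the paper bounds $|\tau_+-\tau_-|=\lambda_\xi^{\sigma/2}\sqrt{\lambda_\xi^{2\theta-\sigma}-4}\gtrsim\lambda_\xi^{\sigma/2}$, so $\lambda_\xi^{\beta}|\widehat{E}_1(t,\xi)|\lesssim\lambda_\xi^{\beta-\sigma/2}e^{-\lambda_\xi^{\sigma-\theta}t}\lesssim t^{-(2\beta-\sigma)/(2\sigma-2\theta)}$, which is precisely the stated rate, while your sharper bound $|\tau_+-\tau_-|\sim\lambda_\xi^{\theta}$ yields $t^{-(\beta-\theta)/(\sigma-\theta)}$, a smaller power on $(0,1]$ (since $\theta>\sigma/2$ here) that also implies the claim.
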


\begin{remark}
    Here are some crucial observations from the decay estimates:
\begin{enumerate}
    \item In contrast to the classical wave equation in Euclidean case, we obtain exponential decay for higher-order time and spatial derivatives of the solution in the damped cases, at least for large timeframes $t\geq 1$; see Theorem \ref{thm:case1}, \ref{thm:case2}, and \ref{thm:case3}. This is because of the discrete spectrum of the positive operator $\mathcal{L}$. However, such estimates are not possible in the case of operator having continuous spectrum; see \cite{Bui,Iketha:struc}.
    \item However, close to the initial time $t=0$, the behavior is significantly distinct: the solution frequently exhibits a singularity, and the decay—if it occurs—is at most polynomial on the interval $t\in(0,1]$.  
   \item  The absence of decay or the presence of polynomial-time decay in the interval $t\in(0,1]$ is generally inadequate to demonstrate global well-posedness. The estimates in Theorem \ref{thm:expdecay} are best suited to prove global well-posedness, while the estimates in Theorem \ref{thm:case0old} can be used to obtain at least local well-posedness.
\end{enumerate}
\end{remark}
\begin{remark}
    The proofs for the effective, critical, and non-effective damping cases in Theorem \ref{thm:case0old} will be provided as remarks, following the proofs of Theorems \ref{thm:case1}, \ref{thm:case2}, and \ref{thm:case3}, respectively. The case of viscoelastic-type damping in Theorem \ref{thm:case0old} will be provided independently.
\end{remark}


 This concludes the outline of decay estimates; hereafter in Section \ref{section-7}, we will provide an application of the decay estimates.
\section{Preliminaries}\label{sec:prelim}
The key components of Fourier analysis in relation to our indicated space and the  self-adjoint, positive operator $\mathcal{L}$ will be reviewed in this section, see \cite{DMT17,Ruz16,ruzhansky2017nonharmonic,MR&JPVR} for a thorough exposition.
More precisely, this section defines the $\mathcal{L}$-Fourier transform,  $\mathcal{L}$-convolution and the Sobolev spaces associated with the operator $\mathcal{L}$ and its properties.  At the end, we will conclude this section by recalling some important inequalities.

For the  self-adjoint, positive operator $\mathcal{L}$,
		the space $H_{\mathcal{L}}^{\infty}:=$ $\operatorname{Dom}\left(\mathcal{L}^{\infty}\right)$ is referred to as  the space of test functions   given by 
		$$
\operatorname{Dom}\left(\mathcal{L}^{\infty}\right):=\bigcap_{k=1}^{\infty} \operatorname{Dom}\left(\mathcal{L}^{k}\right),
		$$
		where 	$\operatorname{Dom}\left(\mathcal{L}^{k}\right):=\left\{f \in \mathcal{H}: \mathcal{L}^{j} f \in \mathcal{H}, j=0,1,2, \ldots, k-1\right\}
		$
        represents the domain of $\mathcal{L}^{k}$ and the
	 Fréchet topology of $H_{\mathcal{L}}^{\infty}$ is given by the family of norms
	$$
\|\varphi\|_{H_{\mathcal{L}}^{k}}:=\max _{j \leq k}\left\|\mathcal{L}^{j} \varphi\right\|_{\mathcal{H}},~~ k \in \mathbb{N}_{0}, ~\varphi \in H_{\mathcal{L}}^{\infty}.
	$$
    We define the space of $\mathcal{L}$-distributions as the space of linear continuous functionals on $H_{\mathcal{L}}^{\infty}$ i.e., 
	$\mathcal{D}^{\prime}(\mathcal{L}) :=\mathcal{L}(H_{\mathcal{L}}^{\infty}, \mathbb{C} )$.
    
Let $\{\lambda_{\xi}:\xi\in \mathcal{I}\}$ denote the set of eigenvalues of the  self-adjoint, positive operator $\mathcal{L}$, with $u_{\xi}\in\mathcal{H}$ denoting the associated eigenfunctions. The space of rapidly decreasing function on $\mathcal{I}$ denoted by $\mathcal{S}(\mathcal{I})$ is defined as follows:
	  $\phi \in \mathcal{S}(\mathcal{I})$ if for every $N\in\mathbb{N}_{0},$ there exists a constant $C_{\phi, N}$ satisfying
	$$
	|\phi(\xi)| \leq C_{\phi,N}\langle\xi\rangle^{-N}\quad \text{for all }\xi \in \mathcal{I},
	$$
	 where $\langle\xi\rangle=(1+\lambda_{\xi})^{\frac{1}{2}}$. The topology on $\mathcal{S}(\mathcal{I})$ is given by the seminorms $p_{k}, k \in \mathbb{N}_{0}$ and is defined by 
	$$
	p_{k}(\phi):=\sup _{\xi \in \mathcal{I}}\langle\xi\rangle^{k}|\phi(\xi)|.
	$$
We can now introduce the notion of $\mathcal{L}$-Fourier transform.
	\begin{defn}
	    [Fourier transform]\label{LFT}
		The $\mathcal{L}$-Fourier transform
		$\mathcal{F}_{\mathcal{L}} : H_{\mathcal{L}}^{\infty} \rightarrow \mathcal{S}(\mathcal{I})
		$
		is given by
		$$
		\widehat{f}(\xi):=\left(\mathcal{F}_{\mathcal{L}} f\right)(\xi)=\langle f,u_{\xi}\rangle.
		$$
		 Consequently, the inverse Fourier transform $\mathcal{F}_{\mathcal{L}}^{-1}: \mathcal{S}(\mathcal{I}) \rightarrow H_{\mathcal{L}}^{\infty}$ is given by
		$$
\left(\mathcal{F}_{\mathcal{L}}^{-1} h \right)(x)=\sum_{\xi \in \mathcal{I}} h(\xi) u_{\xi}(x),~ h \in \mathcal{S}(\mathcal{I})
		$$
and the Fourier inversion formula takes the form
		$$
		f(x)=\sum_{\xi \in \mathcal{I}} \widehat{f}(\xi) u_\xi(x),~~ f \in H_{\mathcal{L}}^{\infty}.
		$$
	\end{defn}
	 The Plancherel's formula is given by
        \begin{equation}\label{planchform}
  \|f\|_{\mathcal{H}}=\left(\sum_{\xi\in\mathcal{I}}|\widehat{f}(\xi)|^2\right)^{1/2}.
        \end{equation}
		The  $\mathcal{L}$-convolution of  $f_1, f_2 \in H_{\mathcal{L}}^{\infty}$  is defined by
        \begin{equation}\label{convolution}
            	\left(f_1*_{\mathcal{L}} f_2\right)(x):=\sum_{\xi \in \mathcal{I}} \widehat{f_1}(\xi) \widehat{f_2}(\xi)  u_\xi(x).
        \end{equation}
        We now recall Sobolev spaces generated by the operator $\mathcal{L}$.
		For $f \in \mathcal{D}^{\prime}(\mathcal{L})$ and $s>0,$ we say that $f \in \mathcal{H}_{\mathcal{L}}^{s}$ iff $\mathcal{L}^{s/2}f\in\mathcal{H}$ and the associated Sobolev norm is defined as
		$$
\|f\|_{\mathcal{H}_{\mathcal{L}}^{s}}:=\|\mathcal{L}^{s/2}f\|_{\mathcal{H}}=\left(\sum_{\xi \in \mathcal{I}}\lambda_{\xi}^{s} |\widehat{f}(\xi)|^2\right)^{1/2}.
		$$
    Moreover, for every $s >0,$ the Sobolev space $\mathcal{H}_{ \mathcal{L}}^{s}$ is a Hilbert space with the inner product given by 
	$$
	\langle f, g\rangle_{\mathcal{H}_{ \mathcal{L}}^{s}}:=\sum_{\xi \in \mathcal{I}}\lambda_{\xi}^{s} \widehat{f}(\xi) \overline{\widehat{g}(\xi)}.
	$$
   The following well-known inequalities will be our crucial ingredients to develop the linear estimates for the Cauchy problem \eqref{main:linear:Hilbert}.
    \begin{enumerate}
        \item\label{rmk1} For $x\in[0,\frac{1}{4}]$, the following inequalities hold:
        \begin{equation}
   -4x\leq -1+\sqrt{1-4x}\leq -2x\quad \text{and}\quad
   -2\leq -1-\sqrt{1-4x}\leq -1.\label{INEQREL1}
\end{equation}
\item For any $\gamma,\delta>0$, the function $f(t)=t^{\gamma}e^{-\delta t}$ and $f(t)=(1+t)^{\gamma}e^{-\delta t}$ is bounded on $[0,\infty)$. Furthermore,
for any $\gamma>0$ and $\beta>\delta>0$, we have 
    \begin{equation}\label{rmk2}
        t^{\gamma}e^{-\beta t}, (1+t)^{\gamma}e^{-\delta t} \leq C(\gamma,\delta)e^{-(\beta-\delta)t} \quad \text{for all } t> 0,
    \end{equation}
    for some positive constant $C(\gamma,\delta)>0$.
    \item Since $\mathcal{L}$ has a discrete spectrum and has no cluster point, therefore,  for given any two fixed numbers $\gamma,a>0$  there exist  constants
$\delta_{a,\gamma}>0$ and $\delta^{a,\gamma}>0$ such that
    \begin{equation}\label{rmk3}
       \sup_{\xi\in\mathcal{I}}\{\lambda_{\xi}^{\gamma }:0<\lambda_{\xi}^{\gamma}<a \}<\delta^{a,\gamma} <a<\delta_{a,\gamma}<\inf_{\xi\in\mathcal{I}}\{\lambda_{\xi}^{\gamma }:\lambda_{\xi}^{\gamma}>a \}.
    \end{equation}
Since ensuring that $\delta$ remains independent of $\xi\in\mathcal{I}$ is adequate for our calculations, we will suppress the dependence of $\delta$ on the parameters $a$ and $\gamma$.
    \end{enumerate}
\section{Preparatory Estimates}\label{section-4}
This section is devoted for some primary computation that will be required to derive the decay estimates for the homogeneous Cauchy problem \eqref{main:linear:Hilbert}.

 Taking the $\mathcal{L}$-Fourier transform \eqref{LFT} to the  Cauchy problem \eqref{main:linear:Hilbert}, we obtain
 \begin{align}\label{hom:eq1}
	\begin{cases}
		\partial^{2}_{t}\widehat{u}(t,\xi)+\lambda_{\xi}^{\theta}\widehat{u}_{t}(t,\xi)+\lambda_{\xi}^{\sigma} \widehat{u}(t,\xi)=0,& \xi\in\mathcal{I},~t>0,\\ \widehat{u}(0,\xi)=\widehat{u}_0(\xi), \quad \partial_{t}\widehat{u}(0,\xi)=\widehat{u}_{1}(\xi), &\xi\in\mathcal{I},
	\end{cases} 
\end{align}
where $\{\lambda_{\xi}\}_{\xi\in\mathcal{I}}$  are  discrete eigenvalues of the operator $\mathcal{L}$.
Consequently,
the characteristic equation of the Cauchy problem \eqref{hom:eq1} is given by
\[\tau ^2+ \lambda_{\xi}^{\theta} \tau  +\lambda_{\xi}^{\sigma} =0,\]
and the characteristic roots are given by
\begin{equation*}
    \tau_{\pm}=\begin{cases}
	-\frac{1}{2}\lambda_{\xi}^{\theta}\pm \frac{1}{2}\sqrt{ \lambda_{\xi}^{2\theta}-4\lambda_{\xi}^{\sigma}}& \text{if }  4\lambda_{\xi}^{\sigma}<\lambda_{\xi}^{2\theta},\\
		-\frac{1}{2}\lambda_{\xi}^{\theta}&\text{if }  4\lambda_{\xi}^{\sigma}=\lambda_{\xi}^{2\theta},\\
			-\frac{1}{2}\lambda_{\xi}^{\theta}\pm \frac{i}{2}\sqrt{4\lambda_{\xi}^{\sigma}-\lambda_{\xi}^{2\theta}}&\text{if } 4\lambda_{\xi}^{\sigma}>\lambda_{\xi}^{2\theta}.
	\end{cases}
\end{equation*}
Depending on the range of $\theta$ and $\sigma$, the characteristic roots take the form
\begin{align}
\text{undamped i.e., }\theta=0 : ~~\tau_{\pm}&=\begin{cases}
	-\frac{1}{2}\pm \sqrt{\frac{1}{4}-\lambda_{\xi}^{\sigma}}& \text{if }  \lambda_{\xi}^{\sigma}<\frac{1}{4},\\
		-\frac{1}{2}&\text{if }  \lambda_{\xi}^{\sigma}=\frac{1}{4},\\
			-\frac{1}{2}\pm i\sqrt{\lambda_{\xi}^{\sigma}-\frac{1}{4}}&\text{if } \lambda_{\xi}^{\sigma}>\frac{1}{4},
	\end{cases}\label{case0:root}\\
    \text{effective-damping i.e., } 2\theta\in(0,\sigma): ~~\tau_{\pm}&=\begin{cases}
	-\frac{1}{2}\lambda_{\xi}^{\theta}\pm \sqrt{\frac{1}{4} \lambda_{\xi}^{2\theta}-\lambda_{\xi}^{\sigma}}& \text{if }  \lambda_{\xi}^{\sigma-2\theta}<\frac{1}{4},\\
		-\frac{1}{2}\lambda_{\xi}^{\theta}&\text{if }  \lambda_{\xi}^{\sigma-2\theta}=\frac{1}{4},\\
			-\frac{1}{2}\lambda_{\xi}^{\theta}\pm i\sqrt{\lambda_{\xi}^{\sigma}-\frac{1}{4}\lambda_{\xi}^{2\theta}}&\text{if } \lambda_{\xi}^{\sigma-2\theta}>\frac{1}{4},
	\end{cases}\label{case1:root}\\
   \text{critical damping i.e., } 2\theta=\sigma :~~ \tau_{\pm}&= -\frac{1}{2}\lambda_{\xi}^{\theta}\pm i\frac{\sqrt{3}}{2}\lambda_{\xi}^{\theta},\label{case2:root}\\
   \text{non-effective-damping i.e., } 2\theta\in(\sigma,2\sigma):~~\tau_{\pm}&=\begin{cases}
		-\frac{1}{2}\lambda_{\xi}^{\theta}\pm i\sqrt{\lambda_{\xi}^{\sigma}-\frac{1}{4}\lambda_{\xi}^{2\theta}}&\text{if } \lambda_{\xi}^{2\theta-\sigma}<4,\\
		-\frac{1}{2}\lambda_{\xi}^{\theta}&\text{if }  \lambda_{\xi}^{2\theta-\sigma}=4,\\
            -\frac{1}{2}\lambda_{\xi}^{\theta}\pm \sqrt{\frac{1}{4} \lambda_{\xi}^{2\theta}-\lambda_{\xi}^{\sigma}}& \text{if }  \lambda_{\xi}^{2\theta-\sigma}>4,
	\end{cases}\label{case3:root}\\
    \text{viscoelastic type damping i.e., } \theta=\sigma:~~   \tau_{\pm}&=\begin{cases}
		-\frac{1}{2}\lambda_{\xi}^{\theta}\pm \frac{i}{2}\sqrt{4\lambda_{\xi}^{\theta}-\lambda_{\xi}^{2\theta}}&\text{if } \lambda_{\xi}^{\theta}<4,\\
		-\frac{1}{2}\lambda_{\xi}^{\theta}&\text{if }  \lambda_{\xi}^{\theta}=4,\\
            -\frac{1}{2}\lambda_{\xi}^{\theta}\pm \frac{1}{2}\sqrt{\lambda_{\xi}^{2\theta}-4\lambda_{\xi}^{\theta}}& \text{if }  \lambda_{\xi}^{\theta}>4.
	\end{cases}\label{case3:root new}
\end{align}
 The solution of the Cauchy problem \eqref{main:linear:Hilbert} can be written as
\begin{equation}
u(t)=E_{0}(t)*_{\mathcal{L}}u_{0}+E_{1}(t)*_{\mathcal{L}}u_{1},\nonumber
\end{equation}
where $*_{\mathcal{L}}$ is the convolution defined in \eqref{convolution}, equivalently
\begin{equation}\label{usoln:fourier}
    \widehat{u}(t,\xi)=\widehat{E}_{0}(t,\xi)\widehat{u}_{0}(\xi)+\widehat{E}_{1}(t,\xi)\widehat{u}_{1}(\xi),
\end{equation}
where 
\begin{equation}\label{e0e1formula}
    \widehat{E}_{0}(t,\xi)=\begin{cases}
        \frac{\tau_{+}e^{\tau_{-}t}-\tau_{-}e^{\tau_{+}t}}{\tau_{+}-\tau_{-}}&\text{if } \tau_{+}\neq \tau_{-},\\
        \left(1-\tau_{+} t\right)e^{\tau_{+} t}&\text{if }\tau_{+}=\tau_{-},
    \end{cases}~~\text{and}~~\widehat{E}_{1}(t,\xi)=\begin{cases}
        \frac{e^{\tau_{+}t}-e^{\tau_{-}t}}{\tau_{+}-\tau_{-}}&\text{if } \tau_{+}\neq \tau_{-},\\
         te^{\tau_{+} t}&\text{if }\tau_{+}=\tau_{-}.
    \end{cases}
\end{equation}
Consequently, for $k\in\mathbb{N}$ we have
\begin{equation}\label{timedersoln:fourier}
    \partial_{t}^{k}\widehat{u}(t,\xi)=\partial_{t}^{k}\widehat{E}_{0}(t,\xi)\widehat{u}_{0}(\xi)+\partial_{t}^{k}\widehat{E}_{1}(t,\xi)\widehat{u}_{1}(\xi),
\end{equation}
where
\begin{align}\label{timederive0}
    \partial^{k}_{t}\widehat{E}_{0}(t,\xi)
     &=-\tau_{+}\tau_{-}\partial^{k-1}_{t}\widehat{E}_{1}(t,\xi)=-\lambda_{\xi}^{\sigma}\partial^{k-1}_{t}\widehat{E}_{1}(t,\xi),\quad\text{and}\\ \partial^{k}_{t}\widehat{E}_{1}(t,\xi)&=\begin{cases}
          \frac{(\tau_{+})^{k}e^{\tau_{+}t}-(\tau_{-})^{k}e^{\tau_{-}t}}{\tau_{+}-\tau_{-}}&\text{if } \tau_{+}\neq \tau_{-},\\
         (\tau_{+})^{k-1}(k+t\tau_{+})e^{\tau_{+} t}&\text{if }\tau_{+}=\tau_{-}.
     \end{cases}\label{timederive1}
\end{align}

\section{Proof of main results}\label{section-5}
This section presents the proof of our main results, which is reliant upon the estimates of $\widehat{E}_{0}(t,\xi)$ and $\widehat{E}_{1}(t,\xi)$, as well as their spatial and temporal derivatives.  The proofs of Theorem \ref{thm:case0old} and \ref{thm:expdecay} will be presented gradually, accompanied by a set of remarks subsequent to the derivation of decay estimates in each respective case. 
 \begin{proof}[Proof of Theorem \ref{thm:case0new}]
Let's begin to estimate $\widehat{E}_{0}(t,\xi)$ and $\widehat{E}_{1}(t,\xi)$ on $\mathcal{I}$. To simplify the presentation, we introduce the following partition of $\mathcal{I}$ in this case:
    \begin{enumerate}
    \item $\mathcal{R}_{1}=\{\xi\in \mathcal{I}: \lambda_{\xi}^{\sigma}=0\},$
    \item  $\mathcal{R}_{2}=\left\{\xi\in \mathcal{I}: 0<\lambda_{\xi}^{\sigma}<\frac{1}{4}\right\}$,
    \item $\mathcal{R}_{3}=\left\{\xi\in \mathcal{I}: \lambda_{\xi}^{\sigma}=\frac{1}{4}\right\}$, and
    \item $ \mathcal{R}_{4}=\left\{\xi\in \mathcal{I}: \lambda_{\xi}^{\sigma}>\frac{1}{4}\right\}$.
\end{enumerate}
Combining the relation \eqref{case0:root} with representation \eqref{e0e1formula}, we get
\begin{equation}\label{case0:e0def}
    \widehat{E}_{0}(t,\xi)=\begin{cases}1 &\text{if } \xi\in\mathcal{R}_{1},\\
    \left[\cosh \left(\sqrt{\frac{1}{4}-\lambda_{\xi}^{\sigma}} t\right)+\frac{\sinh \left(\sqrt{\frac{1}{4}-\lambda_{\xi}^{\sigma}} t\right)}{2\sqrt{\frac{1}{4}-\lambda_{\xi}^{\sigma}}} \right]e^{-\frac{1}{2}t}& \text {if } \xi\in\mathcal{R}_{2},\\
   \left[1+\frac{1}{2}t\right]e^{-\frac{1}{2}t}& \text {if } \xi\in\mathcal{R}_{3}, \\\left[ \cos \left(\sqrt{\lambda_{\xi}^{\sigma}-\frac{1}{4}} t\right)+\frac{\sin\left(\sqrt{\lambda_{\xi}^{\sigma}-\frac{1}{4}} t\right)}{2\sqrt{\lambda_{\xi}^{\sigma}-\frac{1}{4}}}\right]e^{-\frac{1}{2}t}  & \text {if }  \xi\in\mathcal{R}_{4},\end{cases}
\end{equation}
and
\begin{equation}\label{case0:e1def}
    \widehat{E}_{1}(t,\xi)=\begin{cases}1-e^{-t} &\text{if } \xi\in\mathcal{R}_{1},\\
    \frac{\sinh \left(\sqrt{\frac{1}{4}-\lambda_{\xi}^{\sigma}} t\right)}{\sqrt{\frac{1}{4}-\lambda_{\xi}^{\sigma}}}e^{-\frac{1}{2}t} & \text {if } \xi\in\mathcal{R}_{2},\\ te^{-\frac{1}{2}t} & \text {if } \xi\in\mathcal{R}_{3},\\ \frac{\sin \left(\sqrt{\lambda_{\xi}^{\sigma}-\frac{1}{4}} t\right)}{\sqrt{\lambda_{\xi}^{\sigma}-\frac{1}{4}}}e^{-\frac{1}{2}t} & \text {if } \xi\in\mathcal{R}_{4}.\end{cases}
\end{equation}
Now we will estimate $\widehat{E}_{0}(t,\xi)$ and $\widehat{E}_{1}(t,\xi)$ on $\mathcal{R}_{j}$ for $j=1,2,3,4$.

\medskip

\noindent\textbf{Estimate on $\mathcal{R}_{1}$:} From \eqref{case0:e0def} and \eqref{case0:e1def},  we have
\begin{equation*}
    |\widehat{E}_{0}(t,\xi)|= 1\quad\text{and}\quad     |\widehat{E}_{1}(t,\xi)|\lesssim 1,\nonumber
\end{equation*}
and  hence the representation \eqref{usoln:fourier} gives
\begin{equation}\label{c1R11:usol}
    |\widehat{u}(t,\xi)|\lesssim |\widehat{u}_{0}(\xi)| +|\widehat{u}_{1}(\xi)|.
\end{equation}
Furthermore, for any $\beta>0$ we have
\begin{equation}\label{c1R11:spacedersol}
    |\lambda_{\xi}^{\beta}\widehat{u}(t,\xi)|=0.
\end{equation}
Using the relations \eqref{timederive0} and \eqref{timederive1} for the equations \eqref{case0:e0def} and \eqref{case0:e1def}, respectively, we have
\begin{align}
   \partial^{k}_{t}\widehat{E}_{0}(t,\xi)=0\quad&\text{and}\quad \partial^{k}_{t}\widehat{E}_{1}(t,\xi)= (-1)^{k-1}e^{-t}, \quad \text{for } k\geq 1, \nonumber
\end{align}
and hence the representation \eqref{timedersoln:fourier} along with the above relations gives
\begin{equation}\label{c1R11:timedersoln}
       |\partial^{k}_{t}\widehat{u}(t,\xi)|\lesssim e^{-t}|\widehat{u}_{1}(\xi)|\quad\text{for } k\geq 1.
\end{equation}
\noindent\textbf{Estimate on $\mathcal{R}_{2}$:} From \eqref{case0:e0def} and \eqref{case0:e1def}, we have
\begin{equation}
    |\widehat{E}_{0}(t,\xi)|\lesssim  \left[1+\frac{1}{2\sqrt{\frac{1}{4}-\lambda_{\xi}^{\sigma}}}\right]e^{\left(-\frac{1}{2}+\sqrt{\frac{1}{4}-\lambda_{\xi}^{\sigma}}\right)t}\lesssim e^{-\lambda_{\xi}^{\sigma}t},\nonumber
\end{equation}
and
\begin{equation}
      |\widehat{E}_{1}(t,\xi)|\lesssim  \frac{1}{\sqrt{\frac{1}{4}-\lambda_{\xi}^{\sigma}}}e^{\left(-\frac{1}{2}+\sqrt{\frac{1}{4}-\lambda_{\xi}^{\sigma}}\right)t}\lesssim e^{-\lambda_{\xi}^{\sigma} t},\nonumber
\end{equation}
where we have utilized the relation \eqref{INEQREL1}
to estimate the exponential factor and the fact that $\{\lambda_{\xi}\}_{\xi\in\mathcal{I}}$ is discrete, which allows us to bound  $\frac{1}{\sqrt{\frac{1}{4}-\lambda_{\xi}^{\sigma}}}$ uniformly. Consequently, the representation \eqref{usoln:fourier} along with the above estimates gives
\begin{equation}\label{c1R12:usoln}
    |\widehat{u}(t,\xi)|\lesssim e^{-\lambda_{\xi}^{\sigma}t}\left(|\widehat{u}_{0}(\xi)| +|\widehat{u}_{1}(\xi)|\right)\lesssim e^{-\delta t}\left(|\widehat{u}_{0}(\xi)| +|\widehat{u}_{1}(\xi)|\right),
\end{equation}
where the constant $\delta$ is obtained by using the relation \eqref{rmk3} which is due to discrete spectrum.
Furthermore, for any $\beta>0$ we have
\begin{equation}\label{c1R12:spaceder}
\lambda_{\xi}^{\beta}|\widehat{u}(t,\xi)|\lesssim \lambda_{\xi}^{\beta}e^{-\lambda_{\xi}^{\sigma}t}\left(|\widehat{u}_{0}(t,\xi)|+|\widehat{u}_{1}(t,\xi)|\right)\lesssim e^{-\delta t}\left(|\widehat{u}_{0}(t,\xi)|+|\widehat{u}_{1}(t,\xi)|\right),
\end{equation}
since $\lambda_{\xi}^{\beta}$ is uniformly bounded on $\mathcal{R}_{2}$.
Recalling the relation \eqref{timederive1} for $\widehat{E}_{1}(t,\xi)$, we have
\begin{equation*}
    \partial_{t}^{k}\widehat{E}_{1}(t,\xi)=\frac{\left(-\frac{1}{2}+\sqrt{\frac{1}{4}-\lambda_{\xi}^{\sigma}}\right)^{k}}{2\sqrt{\frac{1}{4}-\lambda_{\xi}^{\sigma}}}e^{\left(-\frac{1}{2}+\sqrt{\frac{1}{4}-\lambda_{\xi}^{\sigma}}\right)t}-\frac{\left(-\frac{1}{2}-\sqrt{\frac{1}{4}-\lambda_{\xi}^{\sigma}}\right)^{k}}{2\sqrt{\frac{1}{4}-\lambda_{\xi}^{\sigma}}}e^{\left(-\frac{1}{2}-\sqrt{\frac{1}{4}-\lambda_{\xi}^{\sigma}}\right)t}.
\end{equation*}
This gives
\begin{equation*}
    |\partial_{t}^{k}\widehat{E}_{1}(t,\xi)|\lesssim e^{-\lambda_{\xi}^{\sigma}t}+e^{-\frac{1}{2}t}\lesssim  e^{-\min\{\frac{1}{2},\delta \}t}\lesssim e^{-\delta t},\nonumber
\end{equation*}
for some modified $\delta>0$, where we have used the facts that $\mathcal{L}$ has a discrete spectrum with no cluster point, and $\frac{\left(-\frac{1}{2}+\sqrt{\frac{1}{4}-\lambda_{\xi}^{\sigma}}\right)^{k}}{2\sqrt{\frac{1}{4}-\lambda_{\xi}^{\sigma}}}$ and $\frac{\left(-\frac{1}{2}-\sqrt{\frac{1}{4}-\lambda_{\xi}^{\sigma}}\right)^{k}}{2\sqrt{\frac{1}{4}-\lambda_{\xi}^{\sigma}}}$ are uniformly bounded on $\mathcal{R}_{2}$.
Consequently, using the relation \eqref{timederive0}, we obtain
\begin{equation*}
    |\partial_{t}^{k}\widehat{E}_{0}(t,\xi)|\lesssim \lambda_{\xi}^{\sigma}e^{-\delta t}\lesssim e^{-\delta t},
\end{equation*}
since $\lambda_{\xi}^{\sigma}$ is uniformly bounded on $\mathcal{R}_{2}$.
Combining the above estimate with the representation \eqref{timedersoln:fourier}, we get
\begin{equation}\label{c1R12:timedersoln}
    |\partial_{t}^{k}\widehat{u}(t,\xi)|\lesssim e^{-\delta t} \left(|\widehat{u}_{0}(t,\xi)|+|\widehat{u}_{1}(t,\xi)|\right).
\end{equation}
\noindent\textbf{Estimate on $\mathcal{R}_{3}$:} From \eqref{case0:e0def} and \eqref{case0:e1def}, we have
\begin{equation}
     |\widehat{E}_{0}(t,\xi)|\lesssim (1+t)e^{-\frac{1}{2}t}\quad\text{and}\quad     |\widehat{E}_{1}(t,\xi)|\lesssim te^{-\frac{1}{2}t},\nonumber
\end{equation}
this gives
\begin{equation}\label{c1R13:usoln}
    |\widehat{u}(t,\xi)|\lesssim (1+t) e^{-\frac{1}{2}t}|\widehat{u}_{0}(\xi)| +t e^{-\frac{1}{2} t}|\widehat{u}_{1}(\xi)|.
\end{equation}
Consequently, for any $\beta>0$, 
\begin{equation}\label{c1R13:spacedersoln}
    \lambda_{\xi}^{\beta}|\widehat{u}(t,\xi)|\lesssim (1+t)e^{-\frac{1}{2} t}|\widehat{u}_{0}(\xi)| +te^{-\frac{1}{2} t}|\widehat{u}_{1}(\xi)|,
\end{equation}
Furthermore, using the relation \eqref{timederive1} for $\widehat{E}_{1}^{1}(t,\xi)$, we have
\begin{equation}
    |\partial^{k}_{t}\widehat{E}_{1}(t,\xi)|=\left|\left(-\frac{1}{2}\right)^{k}\left(k-\frac{1}{2}t\right)e^{-\frac{1}{2}t}\right|\lesssim (1+t)e^{-\frac{1}{2} t},\nonumber
\end{equation}
and consequently, using the relation
\eqref{timederive0}, we obtain
\begin{equation}
    |\partial^{k}_{t}\widehat{E}_{0}(t,\xi)|\lesssim (1+t)e^{-\frac{1}{2} t}.\nonumber
\end{equation}
Combining the above estimate with the representation \eqref{timedersoln:fourier}, we have
\begin{equation}\label{c1R13:timedersoln}
    |\partial_{t}^{k}\widehat{u}(t,\xi)|\lesssim (1+t)e^{-\frac{1}{2} t}\left(|\widehat{u}_{0}(\xi)| +|\widehat{u}_{1}(\xi)|\right),\quad \text{for } k\geq 1.
\end{equation}
\noindent\textbf{Estimate on $\mathcal{R}_{4}$:} From \eqref{case0:e0def} and \eqref{case0:e1def}, we get
\begin{align*}
     |\widehat{E}_{0}(t,\xi)|,~ |\widehat{E}_{1}(t,\xi)|\lesssim e^{-\frac{1}{2}t},
     \end{align*}
where we have utilized \eqref{rmk3} to bound the denominator term in \eqref{case0:e0def} and \eqref{case0:e1def}. Consequently, we get
\begin{equation}\label{c1R14:usoln}
    |\widehat{u}(t,\xi)|\lesssim e^{-\frac{1}{2}t}\left(|\widehat{u}_{0}(\xi)| +|\widehat{u}_{1}(\xi)|\right).
\end{equation}
Further, for any $\beta>0$, the term $|\lambda_{\xi}^{\beta}\widehat{E}_{0}(t,\xi)|$ and $|\lambda_{\xi}^{\beta}\widehat{E}_{1}(t,\xi)|$ can be estimated as
\begin{equation*}
     |\lambda_{\xi}^{\beta}\widehat{E}_{0}(t,\xi)|\lesssim \lambda_{\xi}^{\beta}e^{-\frac{1}{2}t}\quad \text{and}\quad |\lambda_{\xi}^{\beta}\widehat{E}_{1}(t,\xi)|\lesssim  \frac{\lambda_{\xi}^{\beta}}{\sqrt{\lambda_{\xi}^{\sigma}-\frac{1}{4}}}e^{-\frac{1}{2}t}\lesssim \lambda_{\xi}^{\beta-\frac{\sigma}{2}}e^{-\frac{1}{2}t},
\end{equation*}
since $\frac{1}{\sqrt{1-\frac{1}{4}\lambda_{\xi}^{-\sigma}}}$ is uniformly bounded due to discrete spectrum.
Consequently, using the above estimates with \eqref{usoln:fourier} we obtain
\begin{equation}\label{c1R14:spacedersoln}
|\lambda_{\xi}^{\beta}\widehat{u}(t,\xi)|\lesssim e^{-\frac{1}{2}t}
\left(\lambda_{\xi}^{\beta}|\widehat{u}_{0}(\xi)|+\lambda_{\xi}^{\beta-\frac{\sigma}{2}}|\widehat{u}_{1}(\xi)|\right).
\end{equation}
Recalling the relation \eqref{timederive1} for $\widehat{E}_{1}(t,\xi)$, we have
\small{\begin{equation*}\label{eider}
    \partial_{t}^{k}\widehat{E}_{1}(t,\xi)=\frac{\left(-\frac{1}{2}+i\sqrt{\lambda_{\xi}^{\sigma}-\frac{1}{4}}\right)^{k}}{2i\sqrt{\lambda_{\xi}^{\sigma}-\frac{1}{4}}}e^{\left(-\frac{1}{2}+i\sqrt{\lambda_{\xi}^{\sigma}-\frac{1}{4}}\right)t}-\frac{\left(-\frac{1}{2}-i\sqrt{\lambda_{\xi}^{\sigma}-\frac{1}{4}}\right)^{k}}{2i\sqrt{\lambda_{\xi}^{\sigma}-\frac{1}{4}}}e^{\left(-\frac{1}{2}-i\sqrt{\lambda_{\xi}^{\sigma}-\frac{1}{4}}\right)t},
\end{equation*}}
and using the boundedness of $\frac{1}{\sqrt{\lambda_{\xi}^{\sigma}-\frac{1}{4}}}$, we obtain
\small{\begin{align*}
    |\partial_{t}^{k}\widehat{E}_{1}(t,\xi)|\lesssim\left(\left|\left(-\frac{1}{2}+i\sqrt{\lambda_{\xi}^{\sigma}-\frac{1}{4}}\right)^{k-1}\right|+\left|\left(-\frac{1}{2}-i\sqrt{\lambda_{\xi}^{\sigma}-\frac{1}{4}}\right)^{k-1}\right|\right)e^{-\frac{1}{2}t}\lesssim \lambda_{\xi}^{\frac{k-1}{2}\sigma}e^{-\frac{1}{2}t}.
\end{align*}}
where we have used the fact that $|z^{k-1}|=|z|^{k-1}$. 
 Consequently using the relation \eqref{timederive0}, we get
\begin{equation}
    |\partial_{t}^{k}\widehat{E}_{0}(t,\xi)|\lesssim \lambda_{\xi}^{\sigma}|\partial_{t}^{k-1}\widehat{E}_{1}(t,\xi)|\lesssim \lambda_{\xi}^{\frac{k}{2}\sigma}e^{-\frac{1}{2} t}.\nonumber
\end{equation}
Hence we obtain
\begin{equation}\label{c1R14:timedersoln}
      |\partial_{t}^{k}\widehat{u}(t,\xi)|\lesssim e^{-\frac{1}{2}t}\left(\lambda_{\xi}^{\frac{k}{2}\sigma}|\widehat{u}_{0}(\xi)| +\lambda_{\xi}^{\frac{k-1}{2}\sigma}|\widehat{u}_{1}(\xi)|\right),\quad \text{for } k\geq 1.
\end{equation}
We are now in position to compute the estimate for $\|u(t)\|_{\mathcal{H}}, \|\mathcal{L}^{\beta}u(t)\|_{\mathcal{H}},$ and $\|\partial_{t}^{k}u(t)\|_{\mathcal{H}}$.

\medskip

\noindent\textbf{Estimate for $\|u(t)\|_{\mathcal{H}}$:} Recalling the Plancherel formula \eqref{planchform} along with the estimates \eqref{c1R11:usol}, \eqref{c1R12:usoln}, \eqref{c1R13:usoln}, and \eqref{c1R14:usoln}, we have
\begin{multline}\label{case1:uest new}
\|u(t)\|_{\mathcal{H}}^{2}=\sum_{\xi\in\mathcal{I}}|\widehat{u}(t,\xi)|^{2}=\sum_{j=1}^{4}\sum_{\xi\in\mathcal{R}_{j}}|\widehat{u}(t,\xi)|^{2}
\lesssim \sum_{\xi\in \mathcal{R}_{1}}\left(|\widehat{u}_{0}(\xi)|^{2} +|\widehat{u}_{1}(\xi)|^{2}\right)+\sum_{\xi\in \mathcal{R}_{2}}e^{-\delta t}\left(|\widehat{u}_{0}(\xi)|^{2} +|\widehat{u}_{1}(\xi)|^{2}\right)\\+ \sum_{\xi\in \mathcal{R}_{3}}(1+t)^{2}e^{-t}\left(|\widehat{u}_{0}(\xi)|^{2} +|\widehat{u}_{1}(\xi)|^{2}\right)
+\sum_{\xi\in \mathcal{R}_{4}}e^{-t}\left(|\widehat{u}_{0}(\xi)|^{2} +|\widehat{u}_{1}(\xi)|^{2}\right)
\lesssim\|u_{0}\|^{2}_{\mathcal{H}}+\|u_{1}\|^{2}_{\mathcal{H}}.
\end{multline}
\noindent\textbf{Estimate for $\|\mathcal{L}^{\beta}u(t)\|_{\mathcal{H}}$:} Similar to the above estimate using Plancherel formula along with the estimate
\eqref{c1R11:spacedersol}, \eqref{c1R12:spaceder}, \eqref{c1R13:spacedersoln}, and \eqref{c1R14:spacedersoln}, we obtain
\begin{multline}\label{case1:spacederuest}
\left\|\mathcal{L}^{\beta}u(t)\right\|_{\mathcal{H}}^{2}=\sum_{j=1}^{4}\sum_{\xi\in\mathcal{R}_{j}}\lambda_{\xi}^{2\beta}|\widehat{u}(t,\xi)|^{2}\lesssim \sum_{\xi\in \mathcal{R}_{2}}e^{-\delta t}\left(|\widehat{u}_{0}(\xi)|^{2} +|\widehat{u}_{1}(\xi)|^{2}\right)
+ \sum_{\xi\in \mathcal{R}_{3}}(1+t)^{2}e^{-t}\left(|\widehat{u}_{0}(\xi)|^{2} +|\widehat{u}_{1}(\xi)|^{2}\right)\\+\sum_{\xi\in \mathcal{R}_{4}}e^{-t}
\left(\lambda_{\xi}^{2\beta}|\widehat{u}_{0}(\xi)|^{2}+\lambda_{\xi}^{2\beta-\sigma}|\widehat{u}_{1}(\xi)|^{2}\right)\lesssim e^{-\delta t}\begin{cases}
\left(\|u_{0}\|^{2}_{\mathcal{H}^{2\beta}_{\mathcal{L}}}+\|u_{1}\|^{2}_{\mathcal{H}^{2\beta-\sigma}_{\mathcal{L}}}\right)&\text{if } 2\beta>\sigma,\\
\left(\|u_{0}\|^{2}_{\mathcal{H}^{2\beta}_{\mathcal{L}}}+\|u_{1}\|^{2}_{\mathcal{H}}\right)&\text{if } 2\beta\leq\sigma,
\end{cases}
\end{multline}
for some $\delta>0$, and we utilize the relation $(1+t)^{2}e^{-t}\lesssim e^{-\frac{1}{2}t}$ in the last inequality.

\medskip

\noindent\textbf{Estimate for $\|\partial_{t}^{k}u(t)\|_{\mathcal{H}}$:}
Combining the estimates \eqref{c1R11:timedersoln}, \eqref{c1R12:timedersoln}, \eqref{c1R13:timedersoln}, and \eqref{c1R14:timedersoln}, we get
\begin{multline}
\|\partial_{t}^{k}u(t)\|_{\mathcal{H}}^{2}\lesssim \sum_{\xi\in \mathcal{R}_{1}}e^{-2t}  |\widehat{u}_{1}(\xi)|^{2}+ \sum_{\xi\in \mathcal{R}_{2}}e^{-\delta t}  \left(|\widehat{u}_{0}(\xi)|^{2} +|\widehat{u}_{1}(\xi)|^{2}\right)
+ \sum_{\xi\in \mathcal{R}_{3}}(1+t)^{2}e^{-t}\left(|\widehat{u}_{0}(\xi)|^{2} +|\widehat{u}_{1}(\xi)|^{2}\right)\\+\sum_{\xi\in \mathcal{R}_{4}}e^{-t}\left(\lambda_{\xi}^{k\sigma}|\widehat{u}_{0}(\xi)|^{2} +\lambda_{\xi}^{(k-1)\sigma}|\widehat{u}_{1}(\xi)|^{2}\right)\lesssim
e^{-\delta t}\left(\|u_{0}\|^{2}_{\mathcal{H}^{k\sigma}_{\mathcal{L}}}+\|u_{1}\|^{2}_{\mathcal{H}^{(k-1)\sigma}_{\mathcal{L}}}\right).\nonumber
\end{multline}
This completes the proof.
\end{proof}
\begin{remark}
The following important observations can be drawn from the above proof:
\begin{itemize}
    \item The Remark \ref{remrk2:mthm} is now more understandable. Specifically, the estimate \eqref{case1:uest new} shows that the regions $\mathcal{R}_{2}$, $\mathcal{R}_{3}$, and $\mathcal{R}_{4}$ experience exponential decay. Nevertheless, the possibility of decay in the total energy norm $\|u(t)\|_{\mathcal{H}}$ is obstructed by the lack of any decay in $\mathcal{R}_{1}$.
    \item In $\mathcal{R}_{1}$, $\mathcal{R}_{2}$, and $\mathcal{R}_{3}$, the factor involving $\lambda_{\xi}$ can always be uniformly bounded; see \eqref{rmk3}. But in $\mathcal{R}_{4}$, such a uniform bound is not achievable, which highlights the minimal regularity required to control the associated norm.
\end{itemize}
\end{remark}
 Let us now begin to prove the decay estimates in the case of effective damping.
\begin{proof}[Proof of Theorem \ref{thm:case1}]
Let's begin to estimate $\widehat{E}_{0}(t,\xi)$ and $\widehat{E}_{1}(t,\xi)$ on $\mathcal{I}$. In this case, we introduce the following partition of $\mathcal{I}$:
    \begin{enumerate}
    \item $\mathcal{R}_{1}=\{\xi\in \mathcal{I}: \lambda_{\xi}^{\sigma-2\theta}=0\},$
    \item  $\mathcal{R}_{2}=\left\{\xi\in \mathcal{I}: 0<\lambda_{\xi}^{\sigma-2\theta}<\frac{1}{4}\right\}$,
    \item $\mathcal{R}_{3}=\left\{\xi\in \mathcal{I}: \lambda_{\xi}^{\sigma-2\theta}=\frac{1}{4}\right\}$, and
    \item $ \mathcal{R}_{4}=\left\{\xi\in \mathcal{I}: \lambda_{\xi}^{\sigma-2\theta}>\frac{1}{4}\right\}$.
\end{enumerate}
Combining the relation \eqref{case1:root} with representation \eqref{e0e1formula}, we get
\begin{align}\label{case1:e0def}
    \widehat{E}_{0}(t,\xi)&=\begin{cases}1 &\text{if } \xi\in\mathcal{R}_{1},\\
    \left[\cosh \left(\sqrt{\frac{1}{4}\lambda_{\xi}^{2\theta}-\lambda_{\xi}^{\sigma}} t\right)+\frac{\lambda_{\xi}^{\theta}\sinh \left(\sqrt{\frac{1}{4}\lambda_{\xi}^{2\theta}-\lambda_{\xi}^{\sigma}} t\right)}{2\sqrt{\frac{1}{4}\lambda_{\xi}^{2\theta}-\lambda_{\xi}^{\sigma}}} \right]e^{-\frac{1}{2}\lambda_{\xi}^{\theta}t}& \text {if } \xi\in\mathcal{R}_{2},\\
   \left[1+\frac{1}{2}\lambda_{\xi}^{\theta}t\right]e^{-\frac{1}{2}\lambda_{\xi}^{\theta}t}& \text {if } \xi\in\mathcal{R}_{3}, \\\left[ \cos \left(\sqrt{\lambda_{\xi}^{\sigma}-\frac{1}{4}\lambda_{\xi}^{2\theta}} t\right)+\frac{\lambda_{\xi}^{\theta}\sin\left(\sqrt{\lambda_{\xi}^{\sigma}-\frac{1}{4}\lambda_{\xi}^{2\theta}} t\right)}{2\sqrt{\lambda_{\xi}^{\sigma}-\frac{1}{4}\lambda_{\xi}^{2\theta}}}\right]e^{-\frac{1}{2}\lambda_{\xi}^{\theta}t}  & \text {if }  \xi\in\mathcal{R}_{4},\end{cases}
\end{align}
and
\begin{align}\label{case1:e1def}
    \widehat{E}_{1}(t,\xi)&=\begin{cases}t &\text{if } \xi\in\mathcal{R}_{1},\\
    \frac{\sinh \left(\sqrt{\frac{1}{4}\lambda_{\xi}^{2\theta}-\lambda_{\xi}^{\sigma}} t\right)}{\sqrt{\frac{1}{4}\lambda_{\xi}^{2\theta}-\lambda_{\xi}^{\sigma}}}e^{-\frac{1}{2}\lambda_{\xi}^{\theta}t} & \text {if } \xi\in\mathcal{R}_{2},\\ te^{-\frac{1}{2}\lambda_{\xi}^{\theta}t} & \text {if } \xi\in\mathcal{R}_{3},\\ \frac{\sin \left(\sqrt{\lambda_{\xi}^{\sigma}-\frac{1}{4}\lambda_{\xi}^{2\theta}} t\right)}{\sqrt{\lambda_{\xi}^{\sigma}-\frac{1}{4}\lambda_{\xi}^{2\theta}}}e^{-\frac{1}{2}\lambda_{\xi}^{\theta}t} & \text {if } \xi\in\mathcal{R}_{4}.\end{cases}
\end{align}
Now we will estimate $\widehat{E}_{0}(t,\xi)$ and $\widehat{E}_{1}(t,\xi)$ on $\mathcal{R}_{j}$ for $j=1,2,3,4$.

\medskip

\noindent\textbf{Estimate on $\mathcal{R}_{1}$:} From \eqref{case1:e0def} and \eqref{case1:e1def},  we have
\begin{equation*}
    |\widehat{E}_{0}(t,\xi)|= 1\quad\text{and}\quad     |\widehat{E}_{1}(t,\xi)|= t,\label{E_0 in R_1,1}\nonumber
\end{equation*}
and  hence the representation \eqref{usoln:fourier} gives
\begin{equation}\label{R11:usol}
    |\widehat{u}(t,\xi)|\lesssim |\widehat{u}_{0}(\xi)| +t|\widehat{u}_{1}(\xi)|.
\end{equation}
Furthermore, for any $\beta>0$ we have
\begin{equation}\label{R11:spacedersol}
    |\lambda_{\xi}^{\beta}\widehat{u}(t,\xi)|=0.
\end{equation}
Using the relations \eqref{timederive0} and \eqref{timederive1} for the equations \eqref{case1:e0def} and \eqref{case1:e1def}, respectively, we have
\begin{align}
   \partial^{k}_{t}\widehat{E}_{0}(t,\xi)=0\quad&\text{and}\quad \partial^{k}_{t}\widehat{E}_{1}(t,\xi)=   \begin{cases}
       1&\text{if } k=1,\\
       0&\text{if } k\geq 2,
   \end{cases}\nonumber
\end{align}
and hence the representation \eqref{timedersoln:fourier} along with the above relations gives
\begin{equation}\label{R11:timedersoln}
       |\partial_{t}\widehat{u}(t,\xi)|\lesssim |\widehat{u}_{1}(\xi)|\quad\text{and}\quad
    |\partial^{k}_{t}\widehat{u}(t,\xi)|=0\quad\text{for } k\geq 2.
\end{equation}
\noindent\textbf{Estimate on $\mathcal{R}_{2}$:} From \eqref{case1:e0def} and \eqref{case1:e1def}, we have
\begin{equation}
    |\widehat{E}_{0}(t,\xi)|\lesssim  \left[1+\frac{\lambda_{\xi}^{\theta}}{2\sqrt{\frac{1}{4}\lambda_{\xi}^{2\theta}-\lambda_{\xi}^{\sigma}}}\right]e^{\left(-\frac{1}{2}\lambda_{\xi}^{\theta}+\sqrt{\frac{1}{4}\lambda_{\xi}^{2\theta}-\lambda_{\xi}^{\sigma}}\right)t}\lesssim e^{-\lambda_{\xi}^{\sigma-\theta}t},\nonumber
\end{equation}
and
\begin{equation}
      |\widehat{E}_{1}(t,\xi)|\lesssim  \frac{1}{\sqrt{\frac{1}{4}\lambda_{\xi}^{2\theta}-\lambda_{\xi}^{\sigma}}}e^{\left(-\frac{1}{2}\lambda_{\xi}^{\theta}+\sqrt{\frac{1}{4}\lambda_{\xi}^{2\theta}-\lambda_{\xi}^{\sigma}}\right)t}\lesssim e^{-\lambda_{\xi}^{\sigma-\theta}t},\nonumber
\end{equation}
where we used the argument similar to $\widehat{E}_{0}$ and $\widehat{E}_{1}$ on $\mathcal{R}_{2}$. Consequently, the representation \eqref{usoln:fourier} along with the above estimates gives
\begin{equation}\label{R12:usoln}
    |\widehat{u}(t,\xi)|\lesssim e^{-\lambda_{\xi}^{\sigma-\theta}t}\left(|\widehat{u}_{0}(\xi)| +|\widehat{u}_{1}(\xi)|\right)\lesssim e^{-\delta t}\left(|\widehat{u}_{0}(\xi)| +|\widehat{u}_{1}(\xi)|\right),
\end{equation}
where the constant $\delta$ is obtained by using the relation \eqref{rmk3}.
Furthermore, for any $\beta>0$ we have
\begin{equation}\label{R12:spaceder}
\lambda_{\xi}^{\beta}|\widehat{u}(t,\xi)|\lesssim \lambda_{\xi}^{\beta}e^{-\lambda_{\xi}^{\sigma-\theta}t}\left(|\widehat{u}_{0}(t,\xi)|+|\widehat{u}_{1}(t,\xi)|\right)\lesssim e^{-\delta t}\left(|\widehat{u}_{0}(t,\xi)|+|\widehat{u}_{1}(t,\xi)|\right).
\end{equation}
Recalling the relation \eqref{timederive1} for $\widehat{E}_{1}(t,\xi)$, we have
\small{\begin{equation}
    \partial_{t}^{k}\widehat{E}_{1}(t,\xi)=\frac{\left(-\frac{1}{2}\lambda_{\xi}^{\theta}+\sqrt{\frac{1}{4}\lambda_{\xi}^{2\theta}-\lambda_{\xi}^{\sigma}}\right)^{k}}{\sqrt{\frac{1}{4}\lambda_{\xi}^{2\theta}-\lambda_{\xi}^{\sigma}}}e^{\left(-\frac{1}{2}\lambda_{\xi}^{\theta}+\sqrt{\frac{1}{4}\lambda_{\xi}^{2\theta}-\lambda_{\xi}^{\sigma}}\right)t}-\frac{\left(-\frac{1}{2}\lambda_{\xi}^{\theta}-\sqrt{\frac{1}{4}\lambda_{\xi}^{2\theta}-\lambda_{\xi}^{\sigma}}\right)^{k}}{\sqrt{\frac{1}{4}\lambda_{\xi}^{2\theta}-\lambda_{\xi}^{\sigma}}}e^{\left(-\frac{1}{2}\lambda_{\xi}^{\theta}-\sqrt{\frac{1}{4}\lambda_{\xi}^{2\theta}-\lambda_{\xi}^{\sigma}}\right)t}\nonumber
\end{equation}}
whence we get
\begin{equation}
    |\partial_{t}^{k}\widehat{E}_{1}(t,\xi)|\lesssim e^{-\lambda_{\xi}^{\sigma-\theta}t}+e^{-\frac{1}{2}\lambda_{\xi}^{\theta}t}\lesssim  e^{-\delta t},\nonumber
\end{equation}
where we used the argument similar to the estimate of $\partial_{t}^{k}\widehat{E}_{1}$ on $\mathcal{R}_{2}$ in Theorem \ref{thm:case0new} and
 consequently, using the relation \eqref{timederive0}, we obtain
\begin{equation*}
    |\partial_{t}^{k}\widehat{E}_{0}(t,\xi)|\lesssim \lambda_{\xi}^{\sigma}e^{-\delta t}
    \lesssim e^{-\delta t}.
\end{equation*}
Combining the above estimate with the representation \eqref{timedersoln:fourier}, we get
\begin{equation}\label{R12:timedersoln}
    |\partial_{t}^{k}\widehat{u}(t,\xi)|\lesssim e^{-\delta t} \left(|\widehat{u}_{0}(t,\xi)|+|\widehat{u}_{1}(t,\xi)|\right).
\end{equation}
\noindent\textbf{Estimate on $\mathcal{R}_{3}$:} From \eqref{case1:e0def} and \eqref{case1:e1def}, we have
\begin{equation}
     |\widehat{E}_{0}(t,\xi)|= \left(1+2^{\frac{\sigma}{2\theta-\sigma}}t\right)e^{-2^{\frac{\sigma}{2\theta-\sigma}}t}\lesssim(1+t)e^{-\delta t}\quad\text{and}\quad     |\widehat{E}_{1}(t,\xi)|= te^{-2^{\frac{\sigma}{2\theta-\sigma}}t}\lesssim te^{-\delta t},\nonumber
\end{equation}
this gives
\begin{equation}\label{R13:usoln}
    |\widehat{u}(t,\xi)|\lesssim (1+t) e^{-\delta t}|\widehat{u}_{0}(\xi)| +t e^{-\delta t}|\widehat{u}_{1}(\xi)|.
\end{equation}
Consequently, for any $\beta>0$, 
\begin{equation}\label{R13:spacedersoln}
    \lambda_{\xi}^{\beta}|\widehat{u}(t,\xi)|\lesssim (1+t)e^{-\delta t}|\widehat{u}_{0}(\xi)| +te^{-\delta t}|\widehat{u}_{1}(\xi)|,
\end{equation}
Similar to the estimates of $\partial^{k}_{t}\widehat{E}_{1}$ and $\partial^{k}_{t}\widehat{E}_{0}$ on $\mathcal{R}_{3}$ in undamped case, we obtain
\begin{equation*}
    |\partial^{k}_{t}\widehat{E}_{1}(t,\xi)|\lesssim (1+t)e^{-\delta t}\quad \text{and} \quad |\partial^{k}_{t}\widehat{E}_{0}(t,\xi)|\lesssim (1+t)e^{-\delta t}.
\end{equation*}
Combining the above estimate with the representation \eqref{timedersoln:fourier}, we have
\begin{equation}\label{R13:timedersoln}
    |\partial_{t}^{k}\widehat{u}(t,\xi)|\lesssim (1+t)e^{-\delta t}\left(|\widehat{u}_{0}(\xi)| +|\widehat{u}_{1}(\xi)|\right),\quad \text{for } k\geq 1.
\end{equation}
\noindent\textbf{Estimate on $\mathcal{R}_{4}$:} From \eqref{case1:e0def} and \eqref{case1:e1def}, using the fact that $\frac{\lambda_{\xi}^{\theta}}{2\sqrt{\lambda_{\xi}^{\sigma}-\frac{1}{4}\lambda_{\xi}^{2\theta}}}$ is bounded, we get
\begin{align}
     |\widehat{E}_{0}(t,\xi)|\lesssim e^{-\frac{1}{2}\lambda_{\xi}^{\theta}t}\lesssim e^{-\delta t}\quad&\text{and}\quad     |\widehat{E}_{1}(t,\xi)|\lesssim  e^{-\frac{1}{2}\lambda_{\xi}^{\theta}t}\lesssim e^{-\delta t}.\nonumber
     \end{align}
Consequently, we get
\begin{equation}\label{R14:usoln}
    |\widehat{u}(t,\xi)|\lesssim e^{-\delta t}\left(|\widehat{u}_{0}(\xi)| +|\widehat{u}_{1}(\xi)|\right).
\end{equation}
Further, for any $\beta>0$, the term $|\lambda_{\xi}^{\beta}\widehat{E}_{0}(t,\xi)|$ and $|\lambda_{\xi}^{\beta}\widehat{E}_{1}(t,\xi)|$ can be estimated as
\begin{equation}\label{case1:spderM1}
     |\lambda_{\xi}^{\beta}\widehat{E}_{0}(t,\xi)|\lesssim \lambda_{\xi}^{\beta} e^{-\frac{1}{2}\lambda_{\xi}^{\theta}t}=\left(\lambda_{\xi}^{\theta}t\right)^{\frac{\beta}{\theta}} e^{-\frac{1}{2}\lambda_{\xi}^{\theta}t}t^{-\frac{\beta}{\theta}}
     \lesssim\begin{cases}
         t^{-\frac{\beta}{\theta}}&\text{if } t\in (0,1],\\
         e^{-\delta t}&\text{if } t\in [1,\infty),
     \end{cases}
\end{equation}
and
\begin{equation}\label{case1:spderM12}
     |\lambda_{\xi}^{\beta}\widehat{E}_{1}(t,\xi)|\lesssim \lambda_{\xi}^{\beta-\frac{\sigma}{2}} e^{-\frac{1}{2}\lambda_{\xi}^{\theta}t}=\left(\lambda_{\xi}^{\theta}t\right)^{\frac{\beta-\frac{\sigma}{2}}{\theta}} e^{-\frac{1}{2}\lambda_{\xi}^{\theta}t}t^{-\frac{\beta-\frac{\sigma}{2}}{\theta}}
     \lesssim\begin{cases}
         t^{-\frac{2\beta-\sigma}{2\theta}}&\text{if } t\in (0,1],\\
         e^{-\delta t}&\text{if } t\in [1,\infty),
     \end{cases}
\end{equation}
whenever $2\beta>\sigma$, while
\begin{equation}\label{case1:spderM12new}
     |\lambda_{\xi}^{\beta}\widehat{E}_{1}(t,\xi)|\lesssim \lambda_{\xi}^{\beta-\frac{\sigma}{2}} e^{-\frac{1}{2}\lambda_{\xi}^{\theta}t}\lesssim e^{-\frac{1}{2}\lambda_{\xi}^{\theta}t}
     \lesssim
         e^{-\delta t},
\end{equation}
whenever $2\beta\leq \sigma$, where we have ultilize the relation \eqref{rmk2}.
Recalling the relation \eqref{timederive1} for $\widehat{E}_{1}(t,\xi)$, we have
\begin{align}\label{R14:timedersoln}
    |\partial_{t}^{k}\widehat{E}_{1}(t,\xi)|
    &\lesssim \lambda_{\xi}^{\frac{k-1}{2}\sigma}e^{-\frac{1}{2}\lambda_{\xi}^{\theta}t}=\left(\lambda_{\xi}^{\theta}t\right)^{\frac{(k-1)\sigma}{2\theta}} e^{-\frac{1}{2}\lambda_{\xi}^{\theta}t}t^{-\frac{(k-1)\sigma}{2\theta}}\lesssim\begin{cases}
         t^{-\frac{(k-1)\sigma}{2\theta}}&\text{if } t\in (0,1],\\
         e^{-\delta t}&\text{if } t\in [1,\infty),
     \end{cases}
\end{align}
and
\begin{align}\label{R14:timedersoln2}
    |\partial_{t}^{k}\widehat{E}_{0}(t,\xi)|
    &\lesssim \lambda_{\xi}^{\frac{k}{2}\sigma}e^{-\frac{1}{2}\lambda_{\xi}^{\theta}t}=\left(\lambda_{\xi}^{\theta}t\right)^{\frac{k\sigma}{2\theta}} e^{-\frac{1}{2}\lambda_{\xi}^{\theta}t}t^{-\frac{k\sigma}{2\theta}}\lesssim\begin{cases}
         t^{-\frac{k\sigma}{2\theta}}&\text{if } t\in (0,1],\\
         e^{-\delta t}&\text{if } t\in [1,\infty).
     \end{cases}
\end{align}
We are now in position to compute the estimate for $\|u(t)\|_{\mathcal{H}}, \|\mathcal{L}^{\beta}u(t)\|_{\mathcal{H}},$ and $\|\partial_{t}^{k}u(t)\|_{\mathcal{H}}$.

\medskip

\noindent\textbf{Estimate for $\|u(t)\|_{\mathcal{H}}$:} Recalling the Plancherel formula \eqref{planchform} along with the estimates \eqref{R11:usol}, \eqref{R12:usoln}, \eqref{R13:usoln}, and \eqref{R14:usoln}, we have
\begin{align}\label{case1:uest}
\|u(t)\|_{\mathcal{H}}^{2}
&\lesssim \sum_{\xi\in \mathcal{R}_{1}}\left(|\widehat{u}_{0}(\xi)|^{2} +t^{2}|\widehat{u}_{1}(\xi)|^{2}\right)+\sum_{\xi\in \mathcal{R}_{2}}e^{-\delta t}\left(|\widehat{u}_{0}(\xi)|^{2} +|\widehat{u}_{1}(\xi)|^{2}\right)\nonumber\\
&+ \sum_{\xi\in \mathcal{R}_{3}}(1+t)^{2}e^{-\delta t}\left(|\widehat{u}_{0}(\xi)|^{2} +|\widehat{u}_{1}(\xi)|^{2}\right)+\sum_{\xi\in \mathcal{R}_{4}}e^{-\delta t}\left(|\widehat{u}_{0}(\xi)|^{2} +|\widehat{u}_{1}(\xi)|^{2}\right)\lesssim\|u_{0}\|^{2}_{\mathcal{H}}+t^{2}\|u_{1}\|^{2}_{\mathcal{H}}.
\end{align}

\medskip

\noindent\textbf{Estimate for $\|\mathcal{L}^{\beta}u(t)\|_{\mathcal{H}}$:} Similar to the above estimate using Plancherel formula along with the estimate
\eqref{R11:spacedersol}, \eqref{R12:spaceder}, \eqref{R13:spacedersoln}, \eqref{case1:spderM1}, \eqref{case1:spderM12} and \eqref{case1:spderM12new}, we obtain
\begin{align}\label{case1:spacederuest new}
\left\|\mathcal{L}^{\beta}u(t)\right\|_{\mathcal{H}}^{2}
&\lesssim \sum_{\xi\in \mathcal{R}_{2}}e^{-\delta t}\left(|\widehat{u}_{0}(\xi)|^{2} +|\widehat{u}_{1}(\xi)|^{2}\right)
+ \sum_{\xi\in \mathcal{R}_{3}}(1+t)^{2}e^{-\delta t}\left(|\widehat{u}_{0}(\xi)|^{2} +|\widehat{u}_{1}(\xi)|^{2}\right)\nonumber\\&+\sum_{\xi\in \mathcal{R}_{4}}\begin{cases}
\begin{cases}
        t^{-\frac{2\beta}{\theta}}|\widehat{u}_{0}(\xi)|^{2}+t^{-\frac{2\beta-\sigma}{\theta}}|\widehat{u}_{1}(\xi)|^{2}&\text{if } t\in (0,1],\\
        e^{-\delta t}\left(|\widehat{u}_{0}(\xi)|^{2}+|\widehat{u}_{1}(\xi)|^{2}\right)&\text{if } t\in [1,\infty).
    \end{cases}&\text{if } 2\beta>\sigma,\\
\begin{cases}
        t^{-\frac{2\beta}{\theta}}|\widehat{u}_{0}(\xi)|^{2}+e^{-\delta t}|\widehat{u}_{1}(\xi)|^{2}&\text{if } t\in (0,1],\\
        e^{-\delta t}\left(|\widehat{u}_{0}(\xi)|^{2}+|\widehat{u}_{1}(\xi)|^{2}\right)&\text{if } t\in [1,\infty),
    \end{cases}&\text{if } 2\beta\leq\sigma.
\end{cases}\nonumber\\
&\lesssim \begin{cases}
\begin{cases}
        t^{-\frac{2\beta}{\theta}}\|u_{0}\|_{\mathcal{H}}^{2}+t^{-\frac{2\beta-\sigma}{\theta}}\|u_{1}\|_{\mathcal{H}}^{2}&\text{if } t\in (0,1],\\
        e^{-\delta t}\left(\|u_{0}\|_{\mathcal{H}}^{2}+\|u_{1}\|_{\mathcal{H}}^{2}\right)&\text{if } t\in [1,\infty).
    \end{cases}&\text{if } 2\beta>\sigma,\\
\begin{cases}
        t^{-\frac{2\beta}{\theta}}\|u_{0}\|_{\mathcal{H}}^{2}+e^{-\delta t}\|u_{1}\|_{\mathcal{H}}^{2}&\text{if } t\in (0,1],\\
        e^{-\delta t}\left(\|u_{0}\|_{\mathcal{H}}^{2}+\|u_{1}\|_{\mathcal{H}}^{2}\right)&\text{if } t\in [1,\infty),
    \end{cases}&\text{if } 2\beta\leq\sigma.
\end{cases}
\end{align}

\medskip

\noindent\textbf{Estimate for $\|\partial_{t}^{k}u(t)\|_{\mathcal{H}}$:}
Combining the estimates \eqref{R11:timedersoln}, \eqref{R12:timedersoln}, \eqref{R13:timedersoln}, \eqref{R14:timedersoln} and \eqref{R14:timedersoln2}, we get
\begin{multline}\label{case1:timederuest}
\|\partial_{t}u(t)\|_{\mathcal{H}}^{2}
\lesssim \sum_{\xi\in \mathcal{R}_{1}}
    |\widehat{u}_{1}(\xi)|^{2}+\sum_{\xi\in \mathcal{R}_{2}}e^{-\delta t} \left(|\widehat{u}_{0}(\xi)|^{2} +|\widehat{u}_{1}(\xi)|^{2}\right)+ \sum_{\xi\in \mathcal{R}_{3}}(1+t)^{2}e^{-\delta t}\left(|\widehat{u}_{0}(\xi)|^{2} +|\widehat{u}_{1}(\xi)|^{2}\right)\\+\sum_{\xi\in \mathcal{R}_{4}}\begin{cases}
        t^{-\frac{\sigma}{\theta}}|\widehat{u}_{0}(\xi)|^{2}+|\widehat{u}_{1}(\xi)|^{2}&\text{if } t\in (0,1],\\
        e^{-\delta t}|\widehat{u}_{0}(\xi)|^{2}+|\widehat{u}_{1}(\xi)|^{2}&\text{if } t\in [1,\infty),
    \end{cases}\lesssim \begin{cases}
        t^{-\frac{\sigma}{\theta}}\|u_{0}\|_{\mathcal{H}}^{2}+\|u_{1}\|_{\mathcal{H}}^{2}&\text{if } t\in (0,1],\\
        e^{-\delta t}\|u_{0}\|_{\mathcal{H}}^{2}+\|u_{1}\|_{\mathcal{H}}^{2}&\text{if } t\in [1,\infty),
    \end{cases}
\end{multline} 
for some  $\delta>0$. Furthermore, for $k\geq 2$, consider 
\begin{multline}\label{case1:timederuestk}
\|\partial_{t}^{k}u(t)\|_{\mathcal{H}}^{2}
\lesssim \sum_{\xi\in \mathcal{R}_{2}}e^{-\delta t} \left(|\widehat{u}_{0}(\xi)|^{2} +|\widehat{u}_{1}(\xi)|^{2}\right)+ \sum_{\xi\in \mathcal{R}_{3}}(1+t)^{2}e^{-\delta t}\left(|\widehat{u}_{0}(\xi)|^{2} +|\widehat{u}_{1}(\xi)|^{2}\right)\\+\sum_{\xi\in \mathcal{R}_{4}}\begin{cases}
 t^{-\frac{k\sigma}{\theta}}|\widehat{u}_{0}(\xi)|^{2}+t^{-\frac{(k-1)\sigma}{\theta}}|\widehat{u}_{1}(\xi)|^{2}&\text{if } t\in (0,1],\\
   e^{-\delta t}\left(|\widehat{u}_{0}(\xi)|^{2}+|\widehat{u}_{1}(\xi)|^{2}\right)&\text{if } t\in [1,\infty),
\end{cases}\lesssim
\begin{cases}
 t^{-\frac{k\sigma}{\theta}}\|u_{0}\|_{\mathcal{H}}^{2}+t^{-\frac{(k-1)\sigma}{\theta}}\|u_{1}\|_{\mathcal{H}}^{2}&\text{if } t\in (0,1],\\
   e^{-\delta t}\left(\|u_{0}\|_{\mathcal{H}}^{2}+\|u_{1}\|_{\mathcal{H}}^{2}\right)&\text{if } t\in [1,\infty),
\end{cases}.
\end{multline}
This completes the proof.
\end{proof}
\begin{remark}
    In contrast to the previous instance, where exponential decay was accomplished with Sobolev regularity on the initial data, in this situation we are able to eliminate the $\lambda_{\xi}$ factor in the region $\mathcal{R}_{4}$ by using the relation \eqref{rmk2}. 
\end{remark}
Having established the decay estimates in the case of effective damping, we can now obtain improved decay estimates with Sobolev regularity on the initial Cauchy data, as stated in Theorem \ref{thm:case0old} for effective damping, with some modifications outlined in the following remark.
\begin{remark}\label{rmk:eff-damping}
Note that the estimates \eqref{case1:spderM1} and \eqref{case1:spderM12} on $\mathcal{R}_{4}$ can also be estimated as
\begin{equation}\label{case1:spderM1modify}
     |\lambda_{\xi}^{\beta}\widehat{E}_{0}(t,\xi)|\lesssim \lambda_{\xi}^{\beta}e^{-\delta t}\quad \text{and}\quad |\lambda_{\xi}^{\beta}\widehat{E}_{1}(t,\xi)|\lesssim  \frac{\lambda_{\xi}^{\beta}}{2\sqrt{\lambda_{\xi}^{\sigma}-\frac{1}{4}\lambda_{\xi}^{2\theta}}}e^{-\delta t}\lesssim \lambda_{\xi}^{\beta-\frac{\sigma}{2}}e^{-\delta t}.
\end{equation}
Consequently, using the estimates \eqref{case1:spderM1modify} with \eqref{usoln:fourier} we obtain
\begin{equation}
|\lambda_{\xi}^{\beta}\widehat{u}(t,\xi)|\lesssim e^{-\delta t}
\left(\lambda_{\xi}^{\beta}|\widehat{u}_{0}(\xi)|+\lambda_{\xi}^{\beta-\frac{\sigma}{2}}|\widehat{u}_{1}(\xi)|\right).\nonumber
\end{equation}
Hence, the estimate of $\|\mathcal{L}^{\beta}u\|_{\mathcal{H}}$ will be replaced by
\begin{equation}
    \|\mathcal{L}^{\beta}u\|_{\mathcal{H}}^{2}\lesssim e^{-\delta t}\begin{cases}
\left(\|u_{0}\|^{2}_{\mathcal{H}^{2\beta}_{\mathcal{L}}}+\|u_{1}\|^{2}_{\mathcal{H}^{2\beta-\sigma}_{\mathcal{L}}}\right)&\text{if } 2\beta>\sigma,\\
\left(\|u_{0}\|^{2}_{\mathcal{H}^{2\beta}_{\mathcal{L}}}+\|u_{1}\|^{2}_{\mathcal{H}}\right)&\text{if } 2\beta\leq\sigma.
    \end{cases}\nonumber
\end{equation}
 Similarly, the estimate \eqref{R14:timedersoln} and \eqref{R14:timedersoln2} can also be estimated as
\begin{align}
    |\partial_{t}^{k}\widehat{E}_{1}(t,\xi)|\lesssim \lambda_{\xi}^{\frac{k-1}{2}\sigma}e^{-\delta t}\quad\text{and}\quad |\partial_{t}^{k}\widehat{E}_{0}(t,\xi)|\lesssim  \lambda_{\xi}^{\frac{k}{2}\sigma}e^{-\delta t}.\nonumber
\end{align}
So, we obtain
\begin{equation}
      |\partial_{t}^{k}\widehat{u}(t,\xi)|\lesssim e^{-\delta t}\left(\lambda_{\xi}^{\frac{k}{2}\sigma}|\widehat{u}_{0}(\xi)| +\lambda_{\xi}^{\frac{k-1}{2}\sigma}|\widehat{u}_{1}(\xi)|\right),\quad \text{for } k\geq 1,\nonumber
\end{equation}
and hence the estimate \eqref{case1:timederuest} and \eqref{case1:timederuestk} will be replaced by
\begin{equation}
    \|\partial_{t}u\|_{\mathcal{H}}^{2}\lesssim e^{-\delta t}\left\|u_{0}\right\|^{2}_{\mathcal{H}^{\sigma}_{\mathcal{L}}}+\|u_{1}\|^{2}_{\mathcal{H}},\nonumber
\end{equation}
and 
\begin{align}
\|\partial_{t}^{k}u(t)\|_{\mathcal{H}}^{2}&\lesssim
e^{-\delta t}\left(\|u_{0}\|^{2}_{\mathcal{H}^{k\sigma}_{\mathcal{L}}}+\|u_{1}\|^{2}_{\mathcal{H}^{(k-1)\sigma}_{\mathcal{L}}}\right)\nonumber
\end{align}
for $k\geq 2$. This observation proves Theorem \ref{thm:case0old} for the case $2\theta \in (0, \sigma)$.
\end{remark}
Let us now begin to prove the decay estimates in the critical case.
\begin{proof}[Proof of Theorem \ref{thm:case2}]
In this case it is enough to consider the following partition of $\mathcal{I}$:
 
    \begin{enumerate}
    \item $\mathcal{R}_{1}=\{\xi\in \mathcal{I}: \lambda_{\xi}=0\}$, and
    \item $\mathcal{R}_{2}=\left\{\xi\in \mathcal{I}: \lambda_{\xi}> 0\right\}$.
\end{enumerate}
Combining the relation \eqref{case2:root} with representation \eqref{e0e1formula}, we get
\begin{equation}
    \widehat{E}_{0}(t,\xi)=\begin{cases}1 &\text{if } \xi\in\mathcal{R}_{1},\\
    \left[ \cos \left(\frac{\sqrt{3}}{2}\lambda_{\xi}^{\theta}t\right)+\frac{1}{\sqrt{3}}\sin\left(\frac{\sqrt{3}}{2}\lambda_{\xi}^{\theta} t\right)\right]e^{-\frac{1}{2}\lambda_{\xi}^{\theta}t} & \text {if } \xi\in\mathcal{R}_{2},\end{cases}\label{E_0 in R^2_2}
\end{equation}
and
\begin{align}\label{E_1 in R^2_2}
    \widehat{E}_{1}(t,\xi)
    =\begin{cases}t &\text{if } \xi\in\mathcal{R}_{1},\\
    \frac{\sin \left(\sqrt{\frac{3}{4}}\lambda_{\xi}^{\theta} t\right)}{\sqrt{3}\lambda_{\xi}^{\theta}}e^{-\frac{1}{2}\lambda_{\xi}^{\theta}t}  & \text {if } \xi\in\mathcal{R}_{2}.
    \end{cases}
\end{align}
Now we will estimate $\widehat{E}_{0}(t,\xi)$ and $\widehat{E}_{1}(t,\xi)$ on $\mathcal{R}_{j}$ for $j=1,2$.
\medskip

\noindent\textbf{Estimate on $\mathcal{R}_{1}$:} Since the expression of $\widehat{E}_{0}(t,\xi)$ and $\widehat{E}_{1}(t,\xi)$ on $\mathcal{R}_{1}$ in this case are exactly same as in the case of effective damping, therefore the estimates for $\widehat{u}(t,\xi)$, $\lambda_{\xi}^{\beta}\widehat{u}(t,\xi)$ and $\partial_{t}^{k}\widehat{u}(t,\xi)$ in this case are exactly equal to \eqref{R11:usol}, \eqref{R11:spacedersol}, and \eqref{R11:timedersoln}, respectively. 
\medskip

\noindent\textbf{Estimate on $\mathcal{R}_{2}$:}
From  \eqref{E_0 in R^2_2} and \eqref{E_1 in R^2_2}, one get
\begin{align}
     |\widehat{E}_{0}(t,\xi)|\lesssim e^{-\delta t}\quad&\text{and}\quad     |\widehat{E}_{1}(t,\xi)|\lesssim e^{-\delta t}.\nonumber
     \end{align}
     and  hence the representation \eqref{usoln:fourier} gives
\begin{equation}\label{R22:usoln}
    |\widehat{u}(t,\xi)|\lesssim e^{-\delta t}\left(|\widehat{u}_{0}(\xi)| +|\widehat{u}_{1}(\xi)|\right).
\end{equation}
Further, for any $\beta>0$ we have
\begin{equation}\label{derivate in beta R^2_2}
     |\lambda_{\xi}^{\beta}\widehat{E}_{0}(t,\xi)|\lesssim \lambda_{\xi}^{\beta} e^{-\frac{1}{2}\lambda_{\xi}^{\theta}t}=\left(\lambda_{\xi}^{\theta}t\right)^{\frac{\beta}{\theta}} e^{-\frac{1}{2}\lambda_{\xi}^{\theta}t}t^{-\frac{\beta}{\theta}}
     \lesssim\begin{cases}
         t^{-\frac{\beta}{\theta}}&\text{if } t\in (0,1],\\
         e^{-\delta t}&\text{if } t\in [1,\infty),
     \end{cases}
\end{equation}
and
\begin{equation}\label{derivate in beta R^2_2 2nd}
     |\lambda_{\xi}^{\beta}\widehat{E}_{1}(t,\xi)|\lesssim \lambda_{\xi}^{\beta-\theta} e^{-\frac{1}{2}\lambda_{\xi}^{\theta}t}=\left(\lambda_{\xi}^{\theta}t\right)^{\frac{\beta-\theta}{\theta}} e^{-\frac{1}{2}\lambda_{\xi}^{\theta}t}t^{-\frac{\beta-\theta}{\theta}}
     \lesssim\begin{cases}
         t^{-\frac{\beta-\theta}{\theta}}&\text{if } t\in (0,1],\\
         e^{-\delta t}&\text{if } t\in [1,\infty),
     \end{cases}
\end{equation}
whenever $\beta> \theta$, while
\begin{equation*}
     |\lambda_{\xi}^{\beta}\widehat{E}_{1}(t,\xi)|\lesssim \lambda_{\xi}^{\beta-\theta} e^{-\frac{1}{2}\lambda_{\xi}^{\theta}t}\lesssim e^{-\frac{1}{2}\lambda_{\xi}^{\theta}t}
     \lesssim
         e^{-\delta t},
\end{equation*}
whenever $\beta\leq \theta$.
Recalling the relation \eqref{timederive1} for $\widehat{E}_{1}(t,\xi)$ on $\mathcal{R}_{2}$, we have
     \begin{align}\label{E31derivative}
    |\partial_{t}^{k}\widehat{E}_{1}(t,\xi)|
    &\lesssim \lambda_{\xi}^{(k-1)\theta}e^{-\frac{1}{2}\lambda_{\xi}^{\theta}t}=\left(\lambda_{\xi}^{\theta}t\right)^{k-1} e^{-\frac{1}{2}\lambda_{\xi}^{\theta}t}t^{-(k-1)}\lesssim\begin{cases}
         t^{-(k-1)}&\text{if } t\in (0,1],\\
         e^{-\delta t}&\text{if } t\in [1,\infty),
     \end{cases}
\end{align}
and
\begin{align}\label{E30derivative}
    |\partial_{t}^{k}\widehat{E}_{0}(t,\xi)|
    &\lesssim \lambda_{\xi}^{k\theta}e^{-\frac{1}{2}\lambda_{\xi}^{\theta}t}=\left(\lambda_{\xi}^{\theta}t\right)^{k} e^{-\frac{1}{2}\lambda_{\xi}^{\theta}t}t^{-k}\lesssim\begin{cases}
         t^{-k}&\text{if } t\in (0,1],\\
         e^{-\delta t}&\text{if } t\in [1,\infty).
     \end{cases}
\end{align}
We are now in position to develop the estimate $\|u\|_{\mathcal{H}},\|\mathcal{L}^{\beta}u\|_{\mathcal{H}}$, and $\|\partial_{t}^{k}u\|_{\mathcal{H}}$.
\medskip

\noindent\textbf{Estimate for $\|u(t)\|_{\mathcal{H}}$:} Similar to the estimate \eqref{case1:uest}, combining the estimate \eqref{R11:usol} and \eqref{R22:usoln}, we get
\begin{align}
\|u(t)\|_{\mathcal{H}}^{2}
&\lesssim \sum_{\xi\in \mathcal{R}_{1}}\left(|\widehat{u}_{0}(\xi)|^{2} +t^{2}|\widehat{u}_{1}(\xi)|^{2}\right)+\sum_{\xi\in \mathcal{R}_{2}}e^{-\delta t}\left(|\widehat{u}_{0}(\xi)|^{2} +|\widehat{u}_{1}(\xi)|^{2}\right)\lesssim\|u_{0}\|^{2}_{\mathcal{H}}+t^{2}\|u_{1}\|^{2}_{\mathcal{H}}.\nonumber
\end{align}
\textbf{Estimate for $\|\mathcal{L}^{\beta}u(t)\|_{\mathcal{H}}$:} Similar to the estimate \eqref{case1:spacederuest new}, combining the estimate \eqref{R11:spacedersol}, \eqref{derivate in beta R^2_2}, and \eqref{derivate in beta R^2_2 2nd},
 we get
\begin{align}\label{case2:spacederuest}
\left\|\mathcal{L}^{\beta}u(t)\right\|_{\mathcal{H}}^{2}&\lesssim \sum_{\xi\in \mathcal{R}_{2}}\begin{cases}
\begin{cases}
        t^{-\frac{2\beta}{\theta}}|\widehat{u}_{0}(\xi)|^{2}+t^{-\frac{2(\beta-\theta)}{\theta}}|\widehat{u}_{1}(\xi)|^{2}&\text{if } t\in (0,1],\\
        e^{-\delta t}\left(|\widehat{u}_{0}(\xi)|^{2}+|\widehat{u}_{1}(\xi)|^{2}\right)&\text{if } t\in [1,\infty).
    \end{cases}&\text{if } \beta> \theta,\\
\begin{cases}
        t^{-\frac{2\beta}{\theta}}|\widehat{u}_{0}(\xi)|^{2}+e^{-\delta t}|\widehat{u}_{1}(\xi)|^{2}&\text{if } t\in (0,1],\\
        e^{-\delta t}\left(|\widehat{u}_{0}(\xi)|^{2}+|\widehat{u}_{1}(\xi)|^{2}\right)&\text{if } t\in [1,\infty),
    \end{cases}&\text{if } \beta\leq \theta.
\end{cases}\nonumber\\
&\lesssim \begin{cases}
\begin{cases}
        t^{-\frac{2\beta}{\theta}}\|u_{0}\|_{\mathcal{H}}^{2}+t^{-\frac{2(\beta-\theta)}{\theta}}\|u_{1}\|_{\mathcal{H}}^{2}&\text{if } t\in (0,1],\\
        e^{-\delta t}\left(\|u_{0}\|_{\mathcal{H}}^{2}+\|u_{1}\|_{\mathcal{H}}^{2}\right)&\text{if } t\in [1,\infty).
    \end{cases}&\text{if } \beta> \theta,\\
\begin{cases}
        t^{-\frac{2\beta}{\theta}}\|u_{0}\|_{\mathcal{H}}^{2}+e^{-\delta t}\|u_{1}\|_{\mathcal{H}}^{2}&\text{if } t\in (0,1],\\
        e^{-\delta t}\left(\|u_{0}\|_{\mathcal{H}}^{2}+\|u_{1}\|_{\mathcal{H}}^{2}\right)&\text{if } t\in [1,\infty),
    \end{cases}&\text{if } \beta\leq \theta.
\end{cases}
\end{align}
\textbf{Estimate for $\|\partial_{t}^{k}u(t)\|_{\mathcal{H}}$:} Similar to the estimate \eqref{case1:timederuest}, combining the estimate \eqref{R11:timedersoln}, \eqref{E31derivative}, and \eqref{E30derivative}, we get
\begin{align}\label{case2:timederuest}
\|\partial_{t}u(t)\|_{\mathcal{H}}^{2}
&\lesssim \sum_{\xi\in \mathcal{R}_{1}}
    |\widehat{u}_{1}(\xi)|^{2}
+\sum_{\xi\in \mathcal{R}_{2}}\begin{cases}
t^{-2}|\widehat{u}_{0}(\xi)|^{2}+|\widehat{u}_{1}(\xi)|^{2}&\text{if } t\in (0,1],\\
        e^{-\delta t}|\widehat{u}_{0}(\xi)|^{2}+|\widehat{u}_{1}(\xi)|^{2}&\text{if } t\in [1,\infty).
    \end{cases}\nonumber\\
&\lesssim \begin{cases}
t^{-2}\|u_{0}\|_{\mathcal{H}}^{2}+\|u_{1}\|_{\mathcal{H}}^{2}&\text{if } t\in (0,1],\\
        e^{-\delta t}\|u_{0}\|_{\mathcal{H}}^{2}+\|u_{1}\|_{\mathcal{H}}^{2}&\text{if } t\in [1,\infty).
    \end{cases}
\end{align}
Furthermore, for $k\geq 2$, we have
\begin{align}\label{case2:timederuestk}
\|\partial_{t}^{k}u(t)\|_{\mathcal{H}}^{2}
&\lesssim \sum_{\xi\in \mathcal{R}_{2}}\begin{cases}
 t^{-2k}|\widehat{u}_{0}(\xi)|^{2}+t^{-2(k-1)}|\widehat{u}_{1}(\xi)|^{2}&\text{if } t\in (0,1],\\
   e^{-\delta t}\left(|\widehat{u}_{0}(\xi)|^{2}+|\widehat{u}_{1}(\xi)|^{2}\right)&\text{if } t\in [1,\infty),
\end{cases}\nonumber\\
&\lesssim \begin{cases}
 t^{-2k}\|u_{0}\|_{\mathcal{H}}+t^{-2(k-1)}\|u_{1}\|_{\mathcal{H}}^{2}&\text{if } t\in (0,1],\\
   e^{-\delta t}\left(\|u_{0}\|_{\mathcal{H}}^{2}+\|u_{1}\|_{\mathcal{H}}^{2}\right)&\text{if } t\in [1,\infty),
\end{cases}
\end{align}
This completes the proof of Theorem \ref{thm:case2}.
\end{proof}
Upon establishing the decay estimates for the  case of critical damping, we may now demonstrate Theorem \ref{thm:case0old} for the critical case, with certain adjustments outlined in the subsequent remark.
\begin{remark}\label{rmk:critical-damping}
The estimates \eqref{derivate in beta R^2_2} and \eqref{derivate in beta R^2_2 2nd} on $\mathcal{R}_{2}$ can also be estimated as
\begin{align}
     |\lambda_{\xi}^{\beta}\widehat{E}_{0}(t,\xi)|\lesssim \lambda_{\xi}^{\beta}e^{-\delta t}\quad \text{and}\quad
     |\lambda_{\xi}^{\beta}\widehat{E}_{1}(t,\xi)|\lesssim  e^{-\delta t}\frac{\lambda_{\xi}^{\beta}}{\lambda_{\xi}^{\theta}}\lesssim e^{-\delta t}\lambda_{\xi}^{\beta-\frac{\sigma}{2}}.\nonumber
\end{align}
Consequently, we can estimate
\begin{equation}
         \lambda_{\xi}^{\beta}|\widehat{u}(t,\xi)|\lesssim e^{-\delta t}\left(\lambda_{\xi}^{\beta}|\widehat{u}_{0}(\xi)| +\lambda_{\xi}^{\beta-\frac{\sigma}{2}}|\widehat{u}_{1}(t,\xi)|\right).\nonumber
     \end{equation}
     Hence, the estimate \eqref{case2:spacederuest}  will be replaced by \eqref{case0:spacedernorm new}.
      Similarly, the estimate \eqref{E31derivative} and \eqref{E30derivative} can also be estimated as
      \begin{align}
    |\partial_{t}^{k}\widehat{E}_{1}(t,\xi)|\lesssim \lambda_{\xi}^{\frac{(k-1)\sigma}{2}}e^{-\delta t}\quad\text{and}\quad |\partial_{t}^{k}\widehat{E}_{0}(t,\xi)|\lesssim  \lambda_{\xi}^{\frac{k\sigma}{2}}e^{-\delta t}.\nonumber
\end{align}
This gives
\begin{equation}
      |\partial_{t}^{k}\widehat{u}(t,\xi)|\lesssim e^{-\delta t}\left(\lambda_{\xi}^{\frac{k\sigma}{2}}|\widehat{u}_{0}(\xi)| +\lambda_{\xi}^{\frac{(k-1)\sigma}{2}}|\widehat{u}_{1}(\xi)|\right),\quad \text{for } k\geq 1,\nonumber
\end{equation}
and hence the estimate \eqref{case2:timederuest} and \eqref{case2:timederuestk} will be replaced by \eqref{case0:timedersoln:k>1 new}. This completes the proof of Theorem \ref{thm:case0old} for the case $2\theta =\sigma$.
\end{remark}
Let us now proceed to prove the decay estimates in the case of non-effective damping.
\begin{proof}[Proof of Theorem \ref{thm:case3}]
Similar to the case of effective damping, it is reasonable to consider the following partition of $\mathcal{I}$ in this case:
 
    \begin{enumerate}
    \item $\mathcal{R}_{1}=\{\xi\in \mathcal{I}: \lambda_{\xi}^{2\theta-\sigma}=0\},$
    \item  $\mathcal{R}_{2}=\left\{\xi\in \mathcal{I}: 0<\lambda_{\xi}^{2\theta-\sigma}<4\right\}$,
    \item $\mathcal{R}_{3}=\left\{\xi\in \mathcal{I}: \lambda_{\xi}^{2\theta-\sigma}=4\right\}$, and
    \item $ \mathcal{R}_{4}=\left\{\xi\in \mathcal{I}: \lambda_{\xi}^{2\theta-\sigma}>4\right\}$.
\end{enumerate}
Combining the relation \eqref{case3:root} with representation \eqref{e0e1formula}, we get
\begin{equation}\label{case3:e0def}
    \widehat{E}_{0}(t,\xi)=\begin{cases}1 &\text{if } \xi\in\mathcal{R}_{1},\\
    \left[\cos \left(\frac{1}{2}\sqrt{4\lambda_{\xi}^{\sigma}-\lambda_{\xi}^{2\theta}} t\right)+\frac{\lambda_{\xi}^{\theta}\sin\left(\frac{1}{2}\sqrt{4\lambda_{\xi}^{\sigma}-\lambda_{\xi}^{2\theta}} t\right)}{\sqrt{4\lambda_{\xi}^{\sigma}-\lambda_{\xi}^{2\theta}}}  \right]e^{-\frac{1}{2}\lambda_{\xi}^{\theta}t}& \text {if } \xi\in\mathcal{R}_{2},\\
   \left[1+\frac{1}{2}\lambda_{\xi}^{\theta}t\right]e^{-\frac{1}{2}\lambda_{\xi}^{\theta}t}& \text {if } \xi\in\mathcal{R}_{3}, \\
   \left[
    \cosh \left(\frac{1}{2}\sqrt{\lambda_{\xi}^{2\theta}-4\lambda_{\xi}^{\sigma}} t\right)+\frac{\lambda_{\xi}^{\theta}\sinh \left(\frac{1}{2}\sqrt{\lambda_{\xi}^{2\theta}-4\lambda_{\xi}^{\sigma}} t\right)}{\sqrt{\lambda_{\xi}^{2\theta}-4\lambda_{\xi}^{\sigma}}}\right]e^{-\frac{1}{2}\lambda_{\xi}^{\theta}t} & \text {if }  \xi\in\mathcal{R}_{4},\end{cases}
\end{equation}
and
\begin{equation}\label{case3:e1def}
    \widehat{E}_{1}(t,\xi)=\begin{cases}t &\text{if } \xi\in\mathcal{R}_{1},\\
    \frac{\sin \left(\frac{1}{2}\sqrt{4\lambda_{\xi}^{\sigma}-\lambda_{\xi}^{2\theta}} t\right)}{\sqrt{4\lambda_{\xi}^{\sigma}-\lambda_{\xi}^{2\theta}}} e^{-\frac{1}{2}\lambda_{\xi}^{\theta}t}& \text {if } \xi\in\mathcal{R}_{2},\\ te^{-\frac{1}{2}\lambda_{\xi}^{\theta}t} & \text {if } \xi\in\mathcal{R}_{3},\\ \frac{\sinh \left(\frac{1}{2}\sqrt{\lambda_{\xi}^{2\theta}-4\lambda_{\xi}^{\sigma}} t\right)}{\sqrt{\lambda_{\xi}^{2\theta}-4\lambda_{\xi}^{\sigma}}}e^{-\frac{1}{2}\lambda_{\xi}^{\theta}t} & \text {if } \xi\in\mathcal{R}_{4}.\end{cases}
\end{equation}
Now we will estimate $\widehat{E}_{0}(t,\xi)$ and $\widehat{E}_{1}(t,\xi)$ on $\mathcal{R}_{j}$ for $j=1,2,3,4$.

\medskip

\noindent\textbf{Estimate on $\mathcal{R}_{1}$:} Since $\widehat{E}_{0}(t,\xi)$   and $\widehat{E}_{1}(t,\xi)$ have the same expression as in the effective damping case on $\mathcal{R}_{1}$, therefore the estimates for $\widehat{u}(t,\xi)$, $\lambda_{\xi}^{\beta}\widehat{u}(t,\xi)$ and $\partial_{t}^{k}\widehat{u}(t,\xi)$ in this case are exactly equal to \eqref{R11:usol}, \eqref{R11:spacedersol}, and \eqref{R11:timedersoln}, respectively.

\noindent\textbf{Estimate on $\mathcal{R}_{2}$:} 
From \eqref{case3:e0def} and \eqref{case3:e1def}, using the fact that $\frac{\lambda_{\xi}^{\theta}}{\sqrt{4\lambda_{\xi}^{\sigma}-\lambda_{\xi}^{2\theta}}}$ is bounded and $\left|\sin x\right|\leq 1$, we get
\begin{align}\label{estimate of u case4_2}
     |\widehat{E}_{0}(t,\xi)|,~|\widehat{E}_{1}(t,\xi)|\lesssim e^{-\delta t}\quad \text{and hence we get }|\widehat{u}(t,\xi)|\lesssim e^{-\delta t}\left(|\widehat{u}_{0}(\xi)| +|\widehat{u}_{1}(\xi)|\right).
     \end{align}
Further, for any $\beta>0$ we have
\begin{align}
     |\lambda_{\xi}^{\beta}\widehat{E}_{0}(t,\xi)|\lesssim \lambda_{\xi}^{\beta}e^{-\frac{1}{2}\lambda_{\xi}^{\theta}t}\lesssim e^{-\delta t}\quad \text{and}\quad
|\lambda_{\xi}^{\beta}\widehat{E}_{1}(t,\xi)|\lesssim e^{-\delta t}.\nonumber
\end{align}
Consequently, we can estimate
\begin{equation}\label{estimate of space derivative case3 R_2}
|\lambda_{\xi}^{\beta}\widehat{u}(t,\xi)|\lesssim e^{-\delta t}
\left(|\widehat{u}_{0}(\xi)|+|\widehat{u}_{1}(\xi)|\right).
\end{equation}
Recalling the relation \eqref{timederive1} for $\widehat{E}_{1}(t,\xi)$ on $\mathcal{R}_{3}$, we have
\begin{multline}
    \partial_{t}^{k}\widehat{E}_{1}(t,\xi)=\\\frac{\left(-\frac{1}{2}\lambda_{\xi}^{\theta}+\frac{i}{2}\sqrt{4\lambda_{\xi}^{\sigma}-\lambda_{\xi}^{2\theta}}\right)^{k}}{i\sqrt{4\lambda_{\xi}^{\sigma}-\lambda_{\xi}^{2\theta}}}e^{\left(-\frac{1}{2}\lambda_{\xi}^{\theta}+\frac{i}{2}\sqrt{4\lambda_{\xi}^{\sigma}-\lambda_{\xi}^{2\theta}}\right)t}-\frac{\left(-\frac{1}{2}\lambda_{\xi}^{\theta}-\frac{i}{2}\sqrt{4\lambda_{\xi}^{\sigma}-\lambda_{\xi}^{2\theta}}\right)^{k}}{i\sqrt{4\lambda_{\xi}^{\sigma}-\lambda_{\xi}^{2\theta}}}e^{\left(-\frac{1}{2}\lambda_{\xi}^{\theta}-\frac{i}{2}\sqrt{4\lambda_{\xi}^{\sigma}-\lambda_{\xi}^{2\theta}}\right)t}.\nonumber
\end{multline}
Using the boundedness of $\frac{1}{\sqrt{4\lambda_{\xi}^{\sigma}-\lambda_{\xi}^{2\theta}}}$, we obtain
\begin{align*}
    |\partial_{t}^{k}\widehat{E}_{1}(t,\xi)|&\lesssim\left(\left|\left(-\frac{1}{2}\lambda_{\xi}^{\theta}+\frac{i}{2}\sqrt{4\lambda_{\xi}^{\sigma}-\lambda_{\xi}^{2\theta}}\right)^{k}\right|+\left|\left(-\frac{1}{2}\lambda_{\xi}^{\theta}-\frac{i}{2}\sqrt{4\lambda_{\xi}^{\sigma}-\lambda_{\xi}^{2\theta}}\right)^{k}\right|\right)e^{-\frac{1}{2}\lambda_{\xi}^{\theta}t} \lesssim e^{-\delta t},
\end{align*}
 Consequently using \eqref{timederive0}, we get
\begin{equation}
    |\partial_{t}^{k}\widehat{E}_{0}(t,\xi)|\lesssim \lambda_{\xi}^{\sigma}|\partial_{t}^{k-1}\widehat{E}_{1}(t,\xi)|\lesssim \lambda_{\xi}^{\frac{k+1}{2}\sigma}e^{-\frac{1}{2}\lambda_{\xi}^{\theta}t}\lesssim e^{-\delta t}.\nonumber
\end{equation}
Hence we obtain
\begin{equation}\label{estimate of t derivative case 3 R_2}
      |\partial_{t}^{k}\widehat{u}(t,\xi)|\lesssim e^{-\delta t}\left(|\widehat{u}_{0}(\xi)| +|\widehat{u}_{1}(\xi)|\right),\quad \text{for } k\geq 1.
\end{equation}

\noindent\textbf{Estimate on $\mathcal{R}_{3}$:} From \eqref{case3:e0def} and \eqref{case3:e1def}, we have
\begin{equation}\label{estimate of u case4_3}
     |\widehat{E}_{0}(t,\xi)|\lesssim (1+t)e^{-\delta t}\quad\text{and}\quad     |\widehat{E}_{1}(t,\xi)|\lesssim te^{-\delta t}.\nonumber
\end{equation}
This gives
\begin{equation}\label{estimate of u case3 R_3}
    |\widehat{u}(t,\xi)|\lesssim (1+t)e^{-\delta t}|\widehat{u}_{0}(\xi)| +te^{-\delta t}\widehat{u}_{1}(\xi)|.
\end{equation}
Consequently, for any $\beta>0$, 
\begin{equation}\label{estimate of space derivative case3 R_3}
    \lambda_{\xi}^{\beta}|\widehat{u}(t,\xi)|\lesssim (1+t)e^{-\delta t}|\widehat{u}_{0}(\xi)| +te^{-\delta t}|\widehat{u}_{1}(\xi)|.
\end{equation}
Furthermore, using the relation \eqref{timederive1} and \eqref{case3:e1def}, we have
\begin{equation}
    |\partial^{k}_{t}\widehat{E}_{1}(t,\xi)|\lesssim \lambda_{\xi}^{(k-1)\theta}(1+\lambda_{\xi}^{\theta}t)e^{-\frac{1}{2}\lambda_{\xi}^{\theta}t}\lesssim (1+t)e^{-\delta t},\nonumber
\end{equation}
and consequently, using the relation
\eqref{timederive0}, we obtain
\begin{equation}
    |\partial^{k}_{t}\widehat{E}_{0}(t,\xi)|\lesssim \lambda_{\xi}^{k\theta}(1+\lambda_{\xi}^{\theta}t)e^{-\frac{1}{2}\lambda_{\xi}^{\theta}t}\lesssim (1+t)e^{-\delta t}.\nonumber
\end{equation}
Combining the above estimate with the representation \eqref{timedersoln:fourier}, we have
\begin{equation}\label{estimate of t derivative case 3 R_3}
    |\partial_{t}^{k}\widehat{u}(t,\xi)|\lesssim (1+t)e^{-\delta t}\left(|\widehat{u}_{0}(\xi)| +|\widehat{u}_{1}(\xi)|\right),\quad \text{for } k\geq 1.
\end{equation}
\noindent\textbf{Estimate on $\mathcal{R}_{4}$:} 
From \eqref{case3:e0def} and \eqref{case3:e1def}, we have
\begin{equation}
    |\widehat{E}_{0}(t,\xi)|\lesssim  \left[1+\frac{\lambda_{\xi}^{\theta}}{\sqrt{\lambda_{\xi}^{2\theta}-4\lambda_{\xi}^{\sigma}}}\right]e^{\left(-\frac{1}{2}\lambda_{\xi}^{\theta}+\frac{1}{2}\sqrt{\lambda_{\xi}^{2\theta}-4\lambda_{\xi}^{\sigma}}\right)t}\lesssim e^{-\lambda_{\xi}^{\sigma-\theta}t}\lesssim e^{-\delta t},\nonumber
\end{equation}
and
\begin{equation}
      |\widehat{E}_{1}(t,\xi)|\lesssim  \frac{1}{\sqrt{\lambda_{\xi}^{2\theta}-4\lambda_{\xi}^{\sigma}}}e^{\left(-\frac{1}{2}\lambda_{\xi}^{\theta}+\frac{1}{2}\sqrt{\lambda_{\xi}^{2\theta}-4\lambda_{\xi}^{\sigma}}\right)t}\lesssim e^{-\lambda_{\xi}^{\sigma-\theta}t} \lesssim e^{-\delta t}.\nonumber
\end{equation}
Here we have utilized the relation \eqref{INEQREL1}
to estimate the exponential factor and the fact that $\{\lambda_{\xi}\}_{\xi\in\mathcal{I}}$ is discrete which allow us to bound the denominator factors. Consequently, the representation \eqref{usoln:fourier}  can be estimated as
\begin{equation}\label{estimate of u case4_4}
    |\widehat{u}(t,\xi)|\lesssim e^{-\delta t}\left(|\widehat{u}_{0}(\xi)| +|\widehat{u}_{1}(\xi)|\right).
\end{equation}
Further, for any $\beta>0$, the terms $|\lambda_{\xi}^{\beta}\widehat{E}_{0}(t,\xi)|$ and $|\lambda_{\xi}^{\beta}\widehat{E}_{1}(t,\xi)|$ can be estimated as
\begin{equation}\label{estimate of beta case4_1}
     |\lambda_{\xi}^{\beta}\widehat{E}_{0}(t,\xi)|\lesssim \lambda_{\xi}^{\beta} e^{-\lambda_{\xi}^{\sigma-\theta}t}=\left(\lambda_{\xi}^{\sigma-\theta}t\right)^{\frac{\beta}{\sigma-\theta}} e^{-\lambda_{\xi}^{\sigma-\theta}t}t^{-\frac{\beta}{\sigma-\theta}}
     \lesssim\begin{cases}
         t^{-\frac{\beta}{\sigma-\theta}}&\text{if } t\in (0,1],\\
         e^{-\delta t}&\text{if } t\in [1,\infty),
     \end{cases}
\end{equation}
and
\begin{equation}\label{estimate of beta case4_2}
     |\lambda_{\xi}^{\beta}\widehat{E}_{1}(t,\xi)|\lesssim \lambda_{\xi}^{\beta-\frac{\sigma}{2}} e^{-\lambda_{\xi}^{\sigma-\theta}t}=\left(\lambda_{\xi}^{\sigma-\theta}t\right)^{\frac{\beta-\frac{\sigma}{2}}{\sigma-\theta}} e^{-\lambda_{\xi}^{\sigma-\theta}t}t^{-\frac{\beta-\frac{\sigma}{2}}{\sigma-\theta}}
     \lesssim\begin{cases}
         t^{-\frac{2\beta-\sigma}{2\sigma-2\theta}}&\text{if } t\in (0,1],\\
         e^{-\delta t}&\text{if } t\in [1,\infty),
     \end{cases}
\end{equation}
whenever $2\beta>\sigma$, while
\begin{equation}\label{estimate of beta case4_3}
     |\lambda_{\xi}^{\beta}\widehat{E}_{1}(t,\xi)|\lesssim \lambda_{\xi}^{\beta-\frac{\sigma}{2}} e^{-\lambda_{\xi}^{\sigma-\theta}t}\lesssim e^{-\lambda_{\xi}^{\sigma-\theta}t}
     \lesssim
         e^{-\delta t},
\end{equation}
whenever $2\beta\leq \sigma$.
Also, for any $k \in \mathbb{N}$, the terms $|\partial_{t}^{k}\widehat{E}_{0}(t,\xi)|$ and $|\partial_{t}^{k}\widehat{E}_{1}(t,\xi)|$ can be estimated as
\begin{align}\label{estimate of t case4_1}
    |\partial_{t}^{k}\widehat{E}_{1}(t,\xi)|
    &\lesssim \lambda_{\xi}^{(k-1)\theta}e^{-\lambda_{\xi}^{\sigma-\theta}t}=\left(\lambda_{\xi}^{\sigma-\theta}t\right)^{\frac{(k-1)\theta}{\sigma-\theta}} e^{-\lambda_{\xi}^{\sigma-\theta}t}t^{-\frac{(k-1)\theta}{\sigma-\theta}}\lesssim\begin{cases}
         t^{-\frac{(k-1)\theta}{\sigma-\theta}}&\text{if } t\in (0,1],\\
         e^{-\delta t}&\text{if } t\in [1,\infty),
     \end{cases}
\end{align}
and
\begin{align}\label{estimate of t case4_2}
    |\partial_{t}^{k}\widehat{E}_{0}(t,\xi)|
    &\lesssim \lambda_{\xi}^{k\theta}e^{-\lambda_{\xi}^{\sigma-\theta}t}=\left(\lambda_{\xi}^{\sigma-\theta}t\right)^{\frac{k\theta}{\sigma-\theta}} e^{-\lambda_{\xi}^{\sigma-\theta}t}t^{-\frac{k\theta}{\sigma-\theta}}\lesssim\begin{cases}
         t^{-\frac{k\theta}{\sigma-\theta}}&\text{if } t\in (0,1],\\
         e^{-\delta t}&\text{if } t\in [1,\infty).
     \end{cases}
\end{align}
We are now in position to compute the estimate for $\|u(t)\|_{\mathcal{H}}, \|\mathcal{L}^{\beta}u(t)\|_{\mathcal{H}},$ and $\|\partial_{t}^{k}u(t)\|_{\mathcal{H}}$.

\medskip

\noindent\textbf{Estimate for $\|u(t)\|_{\mathcal{H}}$:} Recalling the Plancherel formula along with the estimates \eqref{R11:usol}, \eqref{estimate of u case4_2}, \eqref{estimate of u case4_3}, and \eqref{estimate of u case4_4}, we have
\begin{multline}
\|u(t)\|_{\mathcal{H}}^{2}\lesssim \sum_{\xi\in \mathcal{R}_{1}}\left(|\widehat{u}_{0}(\xi)|^{2} +t^{2}|\widehat{u}_{1}(\xi)|^{2}\right)+\sum_{\xi\in \mathcal{R}_{2}}e^{-\delta t}\left(|\widehat{u}_{0}(\xi)|^{2} +|\widehat{u}_{1}(\xi)|^{2}\right)\nonumber\\+ \sum_{\xi\in \mathcal{R}_{3}}(1+t)^{2}e^{-\delta t}\left(|\widehat{u}_{0}(\xi)|^{2} +|\widehat{u}_{1}(\xi)|^{2}\right)+\sum_{\xi\in \mathcal{R}_{4}}e^{-\delta t}\left(|\widehat{u}_{0}(\xi)|^{2} +|\widehat{u}_{1}(\xi)|^{2}\right)\lesssim \|u_{0}\|^{2}_{\mathcal{H}}+t^{2}\|u_{1}\|^{2}_{\mathcal{H}}.
\end{multline}
\medskip

\noindent\textbf{Estimate for $\|\mathcal{L}^{\beta}u(t)\|_{\mathcal{H}}$:} Recalling the Sobolev norm along with the estimate
\eqref{R11:spacedersol}, \eqref{estimate of space derivative case3 R_2}, \eqref{estimate of space derivative case3 R_3}, \eqref{estimate of beta case4_1}, \eqref{estimate of beta case4_2} and \eqref{estimate of beta case4_3}, we obtain
\begin{align}\label{case3:spacederuest}
\left\|\mathcal{L}^{\beta}u(t)\right\|_{\mathcal{H}}^{2}
&\lesssim \sum_{\xi\in \mathcal{R}_{2}}e^{-\delta t}\left(|\widehat{u}_{0}(\xi)|^{2} +|\widehat{u}_{1}(\xi)|^{2}\right)
+ \sum_{\xi\in \mathcal{R}_{3}}(1+t)^{2}e^{-\delta t}\left(|\widehat{u}_{0}(\xi)|^{2} +|\widehat{u}_{1}(\xi)|^{2}\right)\nonumber\\
&+\sum_{\xi\in \mathcal{R}_{4}}\begin{cases}
\begin{cases}
        t^{-\frac{2\beta}{\sigma-\theta}}|\widehat{u}_{0}(\xi)|^{2}+t^{-\frac{2\beta-\sigma}{\sigma-\theta}}|\widehat{u}_{1}(\xi)|^{2}&\text{if } t\in (0,1],\\
        e^{-\delta t}\left(|\widehat{u}_{0}(\xi)|^{2}+|\widehat{u}_{1}(\xi)|^{2}\right)&\text{if } t\in [1,\infty),
    \end{cases}&\text{if } 2\beta>\sigma,\\
\begin{cases}
        t^{-\frac{2\beta}{\sigma-\theta}}|\widehat{u}_{0}(\xi)|^{2}+e^{-\delta t}|\widehat{u}_{1}(\xi)|^{2}&\text{if } t\in (0,1],\\
        e^{-\delta t}\left(|\widehat{u}_{0}(\xi)|^{2}+|\widehat{u}_{1}(\xi)|^{2}\right)&\text{if } t\in [1,\infty),
    \end{cases}&\text{if } 2\beta\leq\sigma,
\end{cases} \nonumber\\
&\lesssim \begin{cases}
\begin{cases}
        t^{-\frac{2\beta}{\sigma-\theta}}\|u_{0}\|_{\mathcal{H}}^{2}+t^{-\frac{2\beta-\sigma}{\sigma-\theta}}\|u_{1}\|_{\mathcal{H}}^{2}&\text{if } t\in (0,1],\\
        e^{-\delta t}\left(\|u_{0}\|_{\mathcal{H}}^{2}+\|u_{1}\|_{\mathcal{H}}^{2}\right)&\text{if } t\in [1,\infty),
    \end{cases}&\text{if } 2\beta>\sigma,\\
\begin{cases}
        t^{-\frac{2\beta}{\sigma-\theta}}\|u_{0}\|_{\mathcal{H}}^{2}+e^{-\delta t}\|u_{1}\|_{\mathcal{H}}^{2}&\text{if } t\in (0,1],\\
        e^{-\delta t}\left(\|u_{0}\|_{\mathcal{H}}^{2}+\|u_{1}\|_{\mathcal{H}}^{2}\right)&\text{if } t\in [1,\infty),
    \end{cases}&\text{if } 2\beta\leq\sigma.
\end{cases}
\end{align}

\medskip

\noindent\textbf{Estimate for $\|\partial_{t}^{k}u(t)\|_{\mathcal{H}}$:} 
Combining \eqref{R11:timedersoln}, \eqref{estimate of t derivative case 3 R_2}, \eqref{estimate of t derivative case 3 R_3} \eqref{estimate of t case4_1}, and \eqref{estimate of t case4_2}, we get
\begin{multline}\label{case3:timederuest}
\|\partial_{t}u(t)\|_{\mathcal{H}}^{2}
\lesssim \sum_{\xi\in \mathcal{R}_{1}}
    |\widehat{u}_{1}(\xi)|^{2}+\sum_{\xi\in \mathcal{R}_{2}}e^{-\delta t}\left(|\widehat{u}_{0}(\xi)|^{2} +|\widehat{u}_{1}(\xi)|^{2}\right)+ \sum_{\xi\in \mathcal{R}_{3}}(1+t)^{2}e^{-\delta t}\left(|\widehat{u}_{0}(\xi)|^{2} +|\widehat{u}_{1}(\xi)|^{2}\right)\\+\sum_{\xi\in \mathcal{R}_{4}}\begin{cases}
        t^{-\frac{2\theta}{\sigma-\theta}}|\widehat{u}_{0}(\xi)|^{2}+|\widehat{u}_{1}(\xi)|^{2}&\text{if } t\in (0,1],\\
        e^{-\delta t}|\widehat{u}_{0}(\xi)|^{2}+|\widehat{u}_{1}(\xi)|^{2}&\text{if } t\in [1,\infty),
    \end{cases}\lesssim\begin{cases}
        t^{-\frac{2\theta}{\sigma-\theta}}\|u_{0}\|_{\mathcal{H}}^{2}+\|u_{1}\|_{\mathcal{H}}^{2}&\text{if } t\in (0,1],\\
        e^{-\delta t}\|u_{0}\|_{\mathcal{H}}^{2}+\|u_{1}\|_{\mathcal{H}}^{2}&\text{if } t\in [1,\infty).
    \end{cases}
\end{multline}
Furthermore, for $k\geq 2$, consider 
\begin{multline}\label{case3:timederuestk}
\|\partial_{t}^{k}u(t)\|_{\mathcal{H}}^{2}
\lesssim \sum_{\xi\in \mathcal{R}_{2}}e^{-\delta t}\left(|\widehat{u}_{0}(\xi)|^{2} +|\widehat{u}_{1}(\xi)|^{2}\right)+ \sum_{\xi\in \mathcal{R}_{3}}(1+t)^{2}e^{-\delta t}\left(|\widehat{u}_{0}(\xi)|^{2} +|\widehat{u}_{1}(\xi)|^{2}\right)\\+\sum_{\xi\in \mathcal{R}_{4}} \begin{cases}
 t^{-\frac{2k\theta}{\sigma-\theta}}|\widehat{u}_{0}(\xi)|^{2}+t^{-\frac{2(k-1)\theta}{\sigma-\theta}}|\widehat{u}_{1}(\xi)|^{2}&\text{if } t\in (0,1],\\
   e^{-\delta t}\left(|\widehat{u}_{0}(\xi)|^{2}+|\widehat{u}_{1}(\xi)|^{2}\right)&\text{if } t\in [1,\infty),
\end{cases}\lesssim
\begin{cases}
 t^{-\frac{2k\theta}{\sigma-\theta}}\|u_{0}\|_{\mathcal{H}}^{2}+t^{-\frac{2(k-1)\theta}{\sigma-\theta}}\|u_{1}\|_{\mathcal{H}}^{2}&\text{if } t\in (0,1],\\
   e^{-\delta t}\left(\|u_{0}\|_{\mathcal{H}}^{2}+\|u_{1}\|_{\mathcal{H}}^{2}\right)&\text{if } t\in [1,\infty).
\end{cases}
\end{multline}
This completes the proof of Theorem \ref{thm:case3}.
\end{proof}

Upon establishing the decay estimates for the  non-effective damping, we may now prove Theorem \ref{thm:case0old} for the case of non-effective damping, with certain adjustments outlined in the subsequent remark.

\begin{remark}\label{rmk:noneff-damping}
The estimates \eqref{estimate of beta case4_1}, \eqref{estimate of beta case4_2} and \eqref{estimate of beta case4_3} on $\mathcal{R}_{4}$ can also be estimated as
\begin{align}
     |\lambda_{\xi}^{\beta}\widehat{E}_{0}(t,\xi)|\lesssim \lambda_{\xi}^{\beta}e^{-\delta t}\quad \text{and}\quad 
     |\lambda_{\xi}^{\beta}\widehat{E}_{1}(t,\xi)|\lesssim  e^{-\delta t}\frac{\lambda_{\xi}^{\beta}}{\sqrt{\lambda_{\xi}^{2\theta}-4\lambda_{\xi}^{\sigma}}}\lesssim e^{-\delta t}\lambda_{\xi}^{\beta-\frac{\sigma}{2}}.\nonumber
\end{align}
Consequently, we can estimate
\begin{equation}
|\lambda_{\xi}^{\beta}\widehat{u}(t,\xi)|\lesssim e^{-\delta t}
\left(\lambda_{\xi}^{\beta}|\widehat{u}_{0}(\xi)|+\lambda_{\xi}^{\beta-\frac{\sigma}{2}}|\widehat{u}_{1}(\xi)|\right).\nonumber
\end{equation}
Hence, the estimate \eqref{case3:spacederuest}  will be replaced by \eqref{thm5:spaceder}.
Similarly, the estimate \eqref{estimate of t case4_1} and \eqref{estimate of t case4_2} can also be estimated as
      \begin{align}
    |\partial_{t}^{k}\widehat{E}_{1}(t,\xi)|\lesssim \lambda_{\xi}^{(k-1)\theta}e^{-\delta t}\quad\text{and}\quad |\partial_{t}^{k}\widehat{E}_{0}(t,\xi)|\lesssim  \lambda_{\xi}^{k\theta}e^{-\delta t}.\nonumber
\end{align}
So, we obtain
\begin{equation}
      |\partial_{t}^{k}\widehat{u}(t,\xi)|\lesssim e^{-\delta t}\left(\lambda_{\xi}^{k\theta}|\widehat{u}_{0}(\xi)| +\lambda_{\xi}^{(k-1)\theta}|\widehat{u}_{1}(\xi)|\right),\quad \text{for } k\geq 1,\nonumber
\end{equation}
and hence the estimate \eqref{case3:timederuest} and \eqref{case3:timederuestk} will be replaced by \eqref{thm5:timeder}. This completes the proof of Theorem \ref{thm:case0old} for the case $2\theta\in(\sigma,2\sigma)$.
\end{remark}
The proof of Theorem \ref{thm:case0old} for effective damping, critical damping and non-effective damping is now complete, as demonstrated in Remarks \ref{rmk:eff-damping}, \ref{rmk:critical-damping}, and \ref{rmk:noneff-damping}. The estimates in the high-frequency region are alternatively derived in these remarks to obtain exponential decay in instead of polynomial decay. Nevertheless, there are certain terms $\mathcal{R}_{1}$ in $\|u\|_{\mathcal{H}}$ and $\|\partial_{t}u\|_{\mathcal{H}}$ that do not show any decay.

Let us now proceed to prove the decay estimates for viscoelastic type damping in Theorem \ref{thm:case0old}.
\begin{proof}
Similar to the previous cases, it is reasonable to consider the following partition of $\mathcal{I}$ in this case:
 
    \begin{enumerate}
    \item $\mathcal{R}_{1}=\{\xi\in \mathcal{I}: \lambda_{\xi}^{\theta}=0\},$
    \item  $\mathcal{R}_{2}=\left\{\xi\in \mathcal{I}: 0<\lambda_{\xi}^{\theta}<4\right\}$,
    \item $\mathcal{R}_{3}=\left\{\xi\in \mathcal{I}: \lambda_{\xi}^{\theta}=4\right\}$, and
    \item $ \mathcal{R}_{4}=\left\{\xi\in \mathcal{I}: \lambda_{\xi}^{\theta}>4\right\}$.
\end{enumerate}
Combining the relation \eqref{case3:root new} with representation \eqref{e0e1formula}, we get
\begin{equation}\label{case3:e0def new}
    \widehat{E}_{0}(t,\xi)=\begin{cases}1 &\text{if } \xi\in\mathcal{R}_{1},\\
    \left[\cos \left(\frac{1}{2}\sqrt{4\lambda_{\xi}^{\theta}-\lambda_{\xi}^{2\theta}} t\right)+\frac{\lambda_{\xi}^{\theta}\sin\left(\frac{1}{2}\sqrt{4\lambda_{\xi}^{\theta}-\lambda_{\xi}^{2\theta}} t\right)}{\sqrt{4\lambda_{\xi}^{\theta}-\lambda_{\xi}^{2\theta}}}  \right]e^{-\frac{1}{2}\lambda_{\xi}^{\theta}t}& \text {if } \xi\in\mathcal{R}_{2},\\
   \left[1+\frac{1}{2}\lambda_{\xi}^{\theta}t\right]e^{-\frac{1}{2}\lambda_{\xi}^{\theta}t}& \text {if } \xi\in\mathcal{R}_{3}, \\
   \left[
    \cosh \left(\frac{1}{2}\sqrt{\lambda_{\xi}^{2\theta}-4\lambda_{\xi}^{\theta}} t\right)+\frac{\lambda_{\xi}^{\theta}\sinh \left(\frac{1}{2}\sqrt{\lambda_{\xi}^{2\theta}-4\lambda_{\xi}^{\theta}} t\right)}{\sqrt{\lambda_{\xi}^{2\theta}-4\lambda_{\xi}^{\theta}}}\right]e^{-\frac{1}{2}\lambda_{\xi}^{\theta}t} & \text {if }  \xi\in\mathcal{R}_{4},\end{cases}
\end{equation}
and
\begin{equation}\label{case3:e1def new}
    \widehat{E}_{1}(t,\xi)=\begin{cases}t &\text{if } \xi\in\mathcal{R}_{1},\\
    \frac{\sin \left(\frac{1}{2}\sqrt{4\lambda_{\xi}^{\theta}-\lambda_{\xi}^{2\theta}} t\right)}{\sqrt{4\lambda_{\xi}^{\theta}-\lambda_{\xi}^{2\theta}}} e^{-\frac{1}{2}\lambda_{\xi}^{\theta}t}& \text {if } \xi\in\mathcal{R}_{2},\\ te^{-\frac{1}{2}\lambda_{\xi}^{\theta}t} & \text {if } \xi\in\mathcal{R}_{3},\\ \frac{\sinh \left(\frac{1}{2}\sqrt{\lambda_{\xi}^{2\theta}-4\lambda_{\xi}^{\theta}} t\right)}{\sqrt{\lambda_{\xi}^{2\theta}-4\lambda_{\xi}^{\theta}}}e^{-\frac{1}{2}\lambda_{\xi}^{\theta}t} & \text {if } \xi\in\mathcal{R}_{4}.\end{cases}
\end{equation}
Now we will estimate $\widehat{E}_{0}(t,\xi)$ and $\widehat{E}_{1}(t,\xi)$ on $\mathcal{R}_{j}$ for $j=1,2,3,4$.

\medskip

\noindent\textbf{Estimate on $\mathcal{R}_{1}$:} Since $\widehat{E}_{0}(t,\xi)$   and $\widehat{E}_{1}(t,\xi)$ have the same expression as in the effective damping case on $\mathcal{R}_{1}$, therefore the estimates for $\widehat{u}(t,\xi)$, $\lambda_{\xi}^{\beta}\widehat{u}(t,\xi)$ and $\partial_{t}^{k}\widehat{u}(t,\xi)$ in this case are exactly equal to \eqref{R11:usol}, \eqref{R11:spacedersol}, and \eqref{R11:timedersoln}, respectively.

\noindent\textbf{Estimate on $\mathcal{R}_{2}$:} 
From \eqref{case3:e0def new} and \eqref{case3:e1def new}, using the fact that $\frac{\lambda_{\xi}^{\theta}}{\sqrt{4\lambda_{\xi}^{\theta}-\lambda_{\xi}^{2\theta}}}$ is bounded and $\left|\sin x\right|\leq 1$, we get
\begin{align}\label{estimate of u case4_2 new}
     |\widehat{E}_{0}(t,\xi)|,~|\widehat{E}_{1}(t,\xi)|\lesssim e^{-\delta t}\quad \text{and hence we get }|\widehat{u}(t,\xi)|\lesssim e^{-\delta t}\left(|\widehat{u}_{0}(\xi)| +|\widehat{u}_{1}(\xi)|\right).
     \end{align}
Further, for any $\beta>0$ we have
\begin{align}
     |\lambda_{\xi}^{\beta}\widehat{E}_{0}(t,\xi)|\lesssim \lambda_{\xi}^{\beta}e^{-\frac{1}{2}\lambda_{\xi}^{\theta}t}\lesssim e^{-\delta t}\quad \text{and}\quad
|\lambda_{\xi}^{\beta}\widehat{E}_{1}(t,\xi)|\lesssim e^{-\delta t}.\nonumber
\end{align}
Consequently, we can estimate
\begin{equation}\label{estimate of space derivative case3 R_2 new}
|\lambda_{\xi}^{\beta}\widehat{u}(t,\xi)|\lesssim e^{-\delta t}
\left(|\widehat{u}_{0}(\xi)|+|\widehat{u}_{1}(\xi)|\right).
\end{equation}
Recalling the relation \eqref{timederive1} for $\widehat{E}_{1}(t,\xi)$ on $\mathcal{R}_{3}$, we have
\begin{multline}
    \partial_{t}^{k}\widehat{E}_{1}(t,\xi)=\\\frac{\left(-\frac{1}{2}\lambda_{\xi}^{\theta}+\frac{i}{2}\sqrt{4\lambda_{\xi}^{\theta}-\lambda_{\xi}^{2\theta}}\right)^{k}}{i\sqrt{4\lambda_{\xi}^{\theta}-\lambda_{\xi}^{2\theta}}}e^{\left(-\frac{1}{2}\lambda_{\xi}^{\theta}+\frac{i}{2}\sqrt{4\lambda_{\xi}^{\theta}-\lambda_{\xi}^{2\theta}}\right)t}-\frac{\left(-\frac{1}{2}\lambda_{\xi}^{\theta}-\frac{i}{2}\sqrt{4\lambda_{\xi}^{\theta}-\lambda_{\xi}^{2\theta}}\right)^{k}}{i\sqrt{4\lambda_{\xi}^{\theta}-\lambda_{\xi}^{2\theta}}}e^{\left(-\frac{1}{2}\lambda_{\xi}^{\theta}-\frac{i}{2}\sqrt{4\lambda_{\xi}^{\theta}-\lambda_{\xi}^{2\theta}}\right)t}.\nonumber
\end{multline}
Using the boundedness of $\frac{1}{\sqrt{4\lambda_{\xi}^{\theta}-\lambda_{\xi}^{2\theta}}}$, we obtain
\begin{align*}
    |\partial_{t}^{k}\widehat{E}_{1}(t,\xi)|&\lesssim\left(\left|\left(-\frac{1}{2}\lambda_{\xi}^{\theta}+\frac{i}{2}\sqrt{4\lambda_{\xi}^{\theta}-\lambda_{\xi}^{2\theta}}\right)^{k}\right|+\left|\left(-\frac{1}{2}\lambda_{\xi}^{\theta}-\frac{i}{2}\sqrt{4\lambda_{\xi}^{\theta}-\lambda_{\xi}^{2\theta}}\right)^{k}\right|\right)e^{-\frac{1}{2}\lambda_{\xi}^{\theta}t} \lesssim e^{-\delta t},
\end{align*}
 Consequently using \eqref{timederive0}, we get
\begin{equation}
    |\partial_{t}^{k}\widehat{E}_{0}(t,\xi)|\lesssim \lambda_{\xi}^{\theta}|\partial_{t}^{k-1}\widehat{E}_{1}(t,\xi)|\lesssim \lambda_{\xi}^{\frac{k+1}{2}\theta}e^{-\frac{1}{2}\lambda_{\xi}^{\theta}t}\lesssim e^{-\delta t}.\nonumber
\end{equation}
Hence we obtain
\begin{equation}\label{estimate of t derivative case 3 R_2 new}
      |\partial_{t}^{k}\widehat{u}(t,\xi)|\lesssim e^{-\delta t}\left(|\widehat{u}_{0}(\xi)| +|\widehat{u}_{1}(\xi)|\right),\quad \text{for } k\geq 1.
\end{equation}

\noindent\textbf{Estimate on $\mathcal{R}_{3}$:} From \eqref{case3:e0def new} and \eqref{case3:e1def new}, we have
\begin{equation}
     |\widehat{E}_{0}(t,\xi)|\lesssim (1+t)e^{-\delta t}\quad\text{and}\quad     |\widehat{E}_{1}(t,\xi)|\lesssim te^{-\delta t}.\nonumber
\end{equation}
This gives
\begin{equation}\label{estimate of u case4_3 new}
    |\widehat{u}(t,\xi)|\lesssim (1+t)e^{-\delta t}|\widehat{u}_{0}(\xi)| +te^{-\delta t}\widehat{u}_{1}(\xi)|.
\end{equation}
Consequently, for any $\beta>0$, 
\begin{equation}\label{estimate of space derivative case3 R_3 new}
    \lambda_{\xi}^{\beta}|\widehat{u}(t,\xi)|\lesssim (1+t)e^{-\delta t}|\widehat{u}_{0}(\xi)| +te^{-\delta t}|\widehat{u}_{1}(\xi)|.
\end{equation}
Furthermore, using the relation \eqref{timederive1} and \eqref{case3:e1def new}, we have
\begin{equation}
    |\partial^{k}_{t}\widehat{E}_{1}(t,\xi)|\lesssim \lambda_{\xi}^{(k-1)\theta}(1+\lambda_{\xi}^{\theta}t)e^{-\frac{1}{2}\lambda_{\xi}^{\theta}t}\lesssim (1+t)e^{-\delta t},\nonumber
\end{equation}
and consequently, using the relation
\eqref{timederive0}, we obtain
\begin{equation}
    |\partial^{k}_{t}\widehat{E}_{0}(t,\xi)|\lesssim \lambda_{\xi}^{k\theta}(1+\lambda_{\xi}^{\theta}t)e^{-\frac{1}{2}\lambda_{\xi}^{\theta}t}\lesssim (1+t)e^{-\delta t}.\nonumber
\end{equation}
Combining the above estimate with the representation \eqref{timedersoln:fourier}, we have
\begin{equation}\label{estimate of t derivative case 3 R_3 new}
    |\partial_{t}^{k}\widehat{u}(t,\xi)|\lesssim (1+t)e^{-\delta t}\left(|\widehat{u}_{0}(\xi)| +|\widehat{u}_{1}(\xi)|\right),\quad \text{for } k\geq 1.
\end{equation}
\noindent\textbf{Estimate on $\mathcal{R}_{4}$:} 
From \eqref{case3:e0def new} and \eqref{case3:e1def new}, we have
\begin{equation}
    |\widehat{E}_{0}(t,\xi)|\lesssim  \left[1+\frac{\lambda_{\xi}^{\theta}}{\sqrt{\lambda_{\xi}^{2\theta}-4\lambda_{\xi}^{\theta}}}\right]e^{\left(-\frac{1}{2}\lambda_{\xi}^{\theta}+\frac{1}{2}\sqrt{\lambda_{\xi}^{2\theta}-4\lambda_{\xi}^{\theta}}\right)t}\lesssim e^{-\lambda_{\xi}^{\theta-\theta}t}\lesssim e^{-t},\nonumber
\end{equation}
and
\begin{equation}
      |\widehat{E}_{1}(t,\xi)|\lesssim  \frac{1}{\sqrt{\lambda_{\xi}^{2\theta}-4\lambda_{\xi}^{\theta}}}e^{\left(-\frac{1}{2}\lambda_{\xi}^{\theta}+\frac{1}{2}\sqrt{4\lambda_{\xi}^{\theta}-\lambda_{\xi}^{2\theta}}\right)t}\lesssim e^{-\lambda_{\xi}^{\theta-\theta}t} \lesssim e^{-t}.\nonumber
\end{equation}
Here we have utilized the relation \eqref{INEQREL1}
to estimate the exponential factor and the fact that $\{\lambda_{\xi}\}_{\xi\in\mathcal{I}}$ is discrete which allow us to bound the denominator factors. Consequently, the representation \eqref{usoln:fourier}  can be estimated as
\begin{equation}\label{estimate of u case4_4 new}
    |\widehat{u}(t,\xi)|\lesssim e^{-t} \left(|\widehat{u}_{0}(\xi)| +|\widehat{u}_{1}(\xi)|\right).
\end{equation}
Further, for any $\beta>0$,
\begin{equation}\label{estimate of beta case4_1 new}
     |\lambda_{\xi}^{\beta}\widehat{E}_{0}(t,\xi)|\lesssim \lambda_{\xi}^{\beta} e^{-t}
\end{equation}
and
\begin{equation}\label{estimate of beta case4_2 new}
     |\lambda_{\xi}^{\beta}\widehat{E}_{1}(t,\xi)|\lesssim \lambda_{\xi}^{\beta-\frac{\theta}{2}} e^{-t},
\end{equation}
whenever $2\beta>\theta$, while
\begin{equation}\label{estimate of beta case4_3 new}
     |\lambda_{\xi}^{\beta}\widehat{E}_{1}(t,\xi)|\lesssim \lambda_{\xi}^{\beta-\frac{\theta}{2}} e^{-t}\lesssim e^{-t},
\end{equation}
whenever $2\beta\leq \theta$.
Also, for any $k \in \mathbb{N}$, the terms $|\partial_{t}^{k}\widehat{E}_{0}(t,\xi)|$ and $|\partial_{t}^{k}\widehat{E}_{1}(t,\xi)|$ can be estimated as
\begin{align}\label{estimate of t case4_1 new}
    |\partial_{t}^{k}\widehat{E}_{1}(t,\xi)|
    &\lesssim \lambda_{\xi}^{(k-1)\theta} e^{-t},
\end{align}
and
\begin{align}\label{estimate of t case4_2 new}
    |\partial_{t}^{k}\widehat{E}_{0}(t,\xi)|
    &\lesssim \lambda_{\xi}^{k\theta}e^{-t} .
\end{align}
We are now in position to compute the estimate for $\|u(t)\|_{\mathcal{H}}, \|\mathcal{L}^{\beta}u(t)\|_{\mathcal{H}},$ and $\|\partial_{t}^{k}u(t)\|_{\mathcal{H}}$.

\medskip

\noindent\textbf{Estimate for $\|u(t)\|_{\mathcal{H}}$:} Recalling the Plancherel formula along with the estimates \eqref{R11:usol}, \eqref{estimate of u case4_2 new}, \eqref{estimate of u case4_3 new}, and \eqref{estimate of u case4_4 new}, we have
\begin{multline}
\|u(t)\|_{\mathcal{H}}^{2}\lesssim \sum_{\xi\in \mathcal{R}_{1}}\left(|\widehat{u}_{0}(\xi)|^{2} +t^{2}|\widehat{u}_{1}(\xi)|^{2}\right)+\sum_{\xi\in \mathcal{R}_{2}}e^{-\delta t}\left(|\widehat{u}_{0}(\xi)|^{2} +|\widehat{u}_{1}(\xi)|^{2}\right)\nonumber\\+ \sum_{\xi\in \mathcal{R}_{3}}(1+t)^{2}e^{-\delta t}\left(|\widehat{u}_{0}(\xi)|^{2} +|\widehat{u}_{1}(\xi)|^{2}\right)+\sum_{\xi\in \mathcal{R}_{4}} e^{-2t}\left(|\widehat{u}_{0}(\xi)|^{2} +|\widehat{u}_{1}(\xi)|^{2}\right)\lesssim \|u_{0}\|^{2}_{\mathcal{H}}+t^{2}\|u_{1}\|^{2}_{\mathcal{H}}.
\end{multline}
\medskip

\noindent\textbf{Estimate for $\|\mathcal{L}^{\beta}u(t)\|_{\mathcal{H}}$:} Recalling the Sobolev norm along with the estimate
\eqref{R11:spacedersol}, \eqref{estimate of space derivative case3 R_2 new}, \eqref{estimate of space derivative case3 R_3 new}, \eqref{estimate of beta case4_1 new}, \eqref{estimate of beta case4_2 new} and \eqref{estimate of beta case4_3 new}, we obtain
\begin{align}\label{case3:spacederuest new}
\left\|\mathcal{L}^{\beta}u(t)\right\|_{\mathcal{H}}^{2}
&\lesssim \sum_{\xi\in \mathcal{R}_{2}}e^{-\delta t}\left(|\widehat{u}_{0}(\xi)|^{2} +|\widehat{u}_{1}(\xi)|^{2}\right)
+ \sum_{\xi\in \mathcal{R}_{3}}(1+t)^{2}e^{-\delta t}\left(|\widehat{u}_{0}(\xi)|^{2} +|\widehat{u}_{1}(\xi)|^{2}\right)\nonumber\\
&+\sum_{\xi\in \mathcal{R}_{4}} e^{-2t} \left(\lambda_{\xi}^{2\beta}|\widehat{u}_{0}(\xi)|^{2}+\lambda_{\xi}^{2\beta-\theta}|\widehat{u}_{1}(\xi)|^{2}\right) \lesssim e^{-\delta t} \begin{cases}
\left(\|u_{0}\|^{2}_{\mathcal{H}^{2\beta}_{\mathcal{L}}}+\|u_{1}\|^{2}_{\mathcal{H}^{2\beta-\theta}_{\mathcal{L}}}\right)&\text{if } 2\beta>\theta,\\
\left(\|u_{0}\|^{2}_{\mathcal{H}^{2\beta}_{\mathcal{L}}}+\|u_{1}\|^{2}_{\mathcal{H}}\right)&\text{if } 2\beta\leq\theta.
\end{cases}
\end{align}

\medskip

\noindent\textbf{Estimate for $\|\partial_{t}^{k}u(t)\|_{\mathcal{H}}$:} 
Combining \eqref{R11:timedersoln}, \eqref{estimate of t derivative case 3 R_2 new}, \eqref{estimate of t derivative case 3 R_3 new}, \eqref{estimate of t case4_1 new}, and \eqref{estimate of t case4_2 new}, we get
\begin{align}\label{case3:timederuest new}
\|\partial_{t}u(t)\|_{\mathcal{H}}^{2}
&\lesssim \sum_{\xi\in \mathcal{R}_{1}}
    |\widehat{u}_{1}(\xi)|^{2}+\sum_{\xi\in \mathcal{R}_{2}}e^{-\delta t}\left(|\widehat{u}_{0}(\xi)|^{2} +|\widehat{u}_{1}(\xi)|^{2}\right)+ \sum_{\xi\in \mathcal{R}_{3}}(1+t)^{2}e^{-\delta t}\left(|\widehat{u}_{0}(\xi)|^{2} +|\widehat{u}_{1}(\xi)|^{2}\right)\nonumber\\&+\sum_{\xi\in \mathcal{R}_{4}} e^{-2t}
\left(\lambda_{\xi}^{2\theta}|\widehat{u}_{0}(\xi)|^{2} +|\widehat{u}_{1}(\xi)|^{2}\right)\lesssim e^{-\delta t}
\|u_{0}\|^{2}_{\mathcal{H}^{2\theta}_{\mathcal{L}}}+\|u_{1}\|^{2}_{\mathcal{H}}.
\end{align}
Furthermore, for $k\geq 2$, we have 
\begin{align}\label{case3:timederuestk new}
\|\partial_{t}^{k}u(t)\|_{\mathcal{H}}^{2}
&\lesssim \sum_{\xi\in \mathcal{R}_{2}}e^{-\delta t}\left(|\widehat{u}_{0}(\xi)|^{2} +|\widehat{u}_{1}(\xi)|^{2}\right)+ \sum_{\xi\in \mathcal{R}_{3}}(1+t)^{2}e^{-\delta t}\left(|\widehat{u}_{0}(\xi)|^{2} +|\widehat{u}_{1}(\xi)|^{2}\right)\nonumber\\
&+\sum_{\xi\in \mathcal{R}_{4}} e^{-2t}
\left(\lambda_{\xi}^{2k\theta}|\widehat{u}_{0}(\xi)|^{2} +\lambda_{\xi}^{(2k-1)\theta}|\widehat{u}_{1}(\xi)|^{2}\right)\lesssim e^{-\delta t}
\left(\|u_{0}\|^{2}_{\mathcal{H}^{2k\theta}_{\mathcal{L}}}+\|u_{1}\|^{2}_{\mathcal{H}^{2(k-1)\theta}_{\mathcal{L}}}\right).
\end{align}
\end{proof}

The proof of Theorem \ref{thm:expdecay} can now be instantly illustrated due to the development of decay estimates with additional Sobolev regularity. 
\begin{proof}[Proof of Theorem \ref{thm:expdecay}]
Given that the operator $\mathcal{L}$ is strictly positive, the set $\mathcal{R}_{1}=\{\xi\in\mathcal{I}:\lambda_{\xi}=0\}$ is empty. Consequently, the low-frequency region $\mathcal{R}_{1}$ does not contribute to $\|u\|_{\mathcal{H}}$ and $\|\partial_{t}u\|_{\mathcal{H}}$, while in other regions i.e. $\mathcal{R}_{j}$ for $j=2,3,4$, we already have exponential decay. The proof of Theorem \ref{thm:expdecay} is now complete. 
\end{proof}
\section{Application}\label{section-7}
Having established the decay estimates for both the undamped and damped cases of the semilinear Cauchy problem \eqref{main:Hilbert}, we are now able to employ these results to the question of global/local (in-time) well-posedness. This application emphasizes the decay framework's efficiency in managing nonlinear effects and guaranteeing the long-term existence and uniqueness of solutions under appropriate assumptions regarding the initial data and the nonlinearity. 

We can recognize our Hilbert space $\mathcal{H}$ with $L^{2}(\Omega)$  for some measure space $\Omega$, thereby enabling us to utilize the entire scale of $L^{p}(\Omega)$ spaces. An exemplary case of interest is $\mathcal{H}=L^{2}(\mathbb{R}^{n})$ or $\mathcal{H}=L^{2}(\mathcal{M})$  for a compact manifold $\mathcal{M}$. In this notation, we denote $\|\cdot\|_{\mathcal{H}}=\|\cdot\|_{2}$ and $\|\cdot\|_{p}=\|\cdot\|_{L^{p}(\Omega)}$.
\begin{defn}[Gagliardo–Nirenberg index]\label{gagliardo} For the operator $\mathcal{L}$, we say that $p\geq 1$ is Gagliardo Nirenberg admissible if the following Gagliardo–Nirenberg type inequality
\begin{eqnarray}\label{gagliardo ineq}
    \|u\|_{2p}\lesssim \|\mathcal{L}^{\sigma/2}u\|^{\alpha}_{2}\|u\|_{2}^{1-\alpha} 
\end{eqnarray}
holds for some $\alpha=\alpha(p)\in [0,1]$.
\end{defn}
\begin{remark}
Here are some examples of     Gagliardo–Nirenberg admissible indices:
\begin{itemize}
    \item \textbf{For harmonic sscillator $\mathcal{L}=-\Delta+|x|^2$ on $L^{2}(\mathbb{R}^{n})$ or for Laplacian on $L^{2}(\mathbb{S}^{n})$:} The Gagliardo–Nirenberg admissible indices are given by
    \begin{equation}
        \begin{cases}
            n=1\text{ and } n=2 &:~~1\leq p<\infty,\\
            n\geq 3&:~~1\leq p\leq \frac{n}{n-2\sigma}.
        \end{cases}
    \end{equation}
    \item \textbf{Laplacian on compact Riemannian manifolds:} The Gagliardo–Nirenberg admissible indices are given by
    \begin{equation}
        \begin{cases}
             n=2 &:~~1\leq p<\infty,\\
            n\geq 3&:~~1\leq p\leq \frac{n}{n-2\sigma}.
        \end{cases}
    \end{equation}
      \item \textbf{Twisted Laplacian (Landau Hamiltonian) on $\mathbb{C}^{n}$ :} The Gagliardo–Nirenberg admissible indices are given by
    \begin{equation}
        \begin{cases}
             n=1 &:~~1\leq p<\infty,\\
            n\geq 2&:~~1\leq p\leq \frac{n}{n-\sigma}.
        \end{cases}
    \end{equation}
\end{itemize}
\end{remark}
For more details regarding the examples in the above remark, one can refer to \cite[Section 3]{mic-wave2} and the references therein.
 \begin{theorem}[Global Existence]\label{global:existence:l2}
	Let $\sigma>0$,  $\theta\in[0,\sigma]$ and $\mathcal{L}$ be strictly positive operator. Let $p\geq 1$ be Gagliardo-Nirenberg admissible according to Definition \ref{gagliardo} and $f$ satisfies the following relation:
 \begin{equation}\label{fuj:condition}
     |f(u)-f(v)|\lesssim(|u|^{p-1}+|v|^{p-1})|u-v|\quad \text{and }\quad f(0)=0.
 \end{equation}
 Also assume that
 \begin{equation}
     (u_{0},u_{1})\in \mathcal{A}^{\sigma,\theta}:=\begin{cases}
         \mathcal{H}_{\mathcal{L}}^{\sigma}\times \mathcal{H}\quad &\mathrm{for }~~2\theta\in[0,\sigma]\\
         \mathcal{H}_{\mathcal{L}}^{2\theta}\times \mathcal{H}\quad &\mathrm{for }~~2\theta\in(\sigma,2\sigma].\nonumber
     \end{cases}
 \end{equation}
	Then, there exists a sufficiently small $\varepsilon>0$ such that for any initial data
	\begin{equation*}
		(u_{0},u_{1})\in \mathcal{A}^{\sigma,\theta} ~~\text{ satisfying }  ~~		\|(u_{0},u_{1})\|_{\mathcal{A}^{\sigma,\theta}}<\epsilon,
	\end{equation*}
	there is a unique global solution $u \in C(\mathbb{R}_{+}; \mathcal{H}_{\mathcal{L}}^{\sigma})\cap C^{1}(\mathbb{R}_{+}; \mathcal{H})$ to the Cauchy problem \eqref{main:Hilbert}.
    Moreover, $u$ also satisfies the estimates given in Theorem \ref{thm:expdecay} for $(k,\beta)\in\{(0,0),(0,\sigma/2),(1,0)\}$ and for all $t> 0$.
\end{theorem}
As mentioned in the prior discussion, we denote $\|\cdot\|_{\mathcal{H}}=\|\cdot\|_{2}$.
\begin{proof} The expression \begin{equation}
u^{\mathrm{lin}}(t)=E_{0}*_{\mathcal{L}}u_{0}(t)+E_{1}*_{\mathcal{L}}u_{1}(t),\nonumber
\end{equation}
represents the solution of the homogeneous Cauchy problem \eqref{main:linear:Hilbert},
	where $*_{\mathcal{L}}$ denotes  $\mathcal{L}$-convolution. The solution to the semilinear Cauchy problem \eqref{main:Hilbert}, using Duhamel's principle, can be expressed as
	\begin{equation*}
		u(t)=u^{\operatorname{lin}}(t)+\int_{0}^{t}E_{1}(t-\tau)*_{\mathcal{L}}f(u)(\tau)\mathrm{d}\tau=u^{\operatorname{lin}}(t)+u^{\operatorname{non}}(t).
	\end{equation*}
    Now, we introduce the solution space $X:=C(\mathbb{R}_{+}; \mathcal{H}_{\mathcal{L}}^{\sigma})\cap C^{1}(\mathbb{R}_{+}; \mathcal{H})$ 
associated with the norm
 \begin{equation*}
\left\|u\right\|_{X}:=\sup\limits_{t\in[0,\infty)}\left\{e^{\delta t}\left(\left\|u(t)\right\|_{2}+\left\|\mathcal{L}^{\sigma/2}u(t)\right\|_{2}+\left\|\partial_{t}u(t)\right\|_{2}\right)\right\}.
\end{equation*}
Let $\mathcal{N}:X\to X$ be the operator defined as follows:
\begin{equation}\label{nudef}
\mathcal{N} u(t):=u^{\operatorname{lin}}(t)+u^{\operatorname{non}}(t),\quad \text{for all }t>0.
\end{equation}
We will illustrate that the following inequalities are satisfied by the operator $\mathcal{N}$:
\begin{eqnarray}
 \|\mathcal{N} u\|_{X}&\leq& L\label{ineq1}, \quad\text{for some suitable } L>0,\\
 \|\mathcal{N} u-\mathcal{N} v\|_{X} &\leq& \frac{1}{r}\|u-v\|_{X},\quad\text{for some suitable } r>1,\label{inequali2}
\end{eqnarray}
for all $u, v \in X$.
We will establish that $\mathcal{N}$ is a contraction mapping on the Banach space  $X$ using the inequalities \eqref{ineq1} and \eqref{inequali2}. As a result, the Banach fixed-point theorem asserts that $\mathcal{N}$ admits a unique fixed point in $X$, which corresponds to a unique global solution $u$  of the equation $\mathcal{N}u=u$.
We now proceed to prove the inequalities \eqref{ineq1} and \eqref{inequali2}.

\medskip

\noindent\textbf{Estimate for $\|\mathcal{N}u\|_{X}$:}
Employing the definition of the norm   $\|\cdot\|_{X}$ and the estimates provide in Theorem \ref{thm:expdecay}, we get the following:
\begin{equation}\label{fineqn0}
    \|u^{\operatorname{lin}}(t)\|_{X}\lesssim \|(u_{0},u_{1})\|_{\mathcal{A}^{\sigma,\theta}},
\end{equation}
and hence $u^{\operatorname{lin}}\in X$. To determine the norm $\left\|u^{\operatorname{non}}\right\|_{X}$,
  the following component norms must be estimated:  
\begin{equation*}
	\|\partial_{t}^{i}\mathcal{L}^{j}u^{\operatorname{non}}(t)\|_{2},\quad (i,j)\in\{(0,0),(0,\sigma/2),(1,0)\}.
\end{equation*}
Using the assumption \eqref{fuj:condition} on $f$, we get
$$
\|(f(u)-f(v))(t, \cdot)\|_2^2 \lesssim \int_{\Omega} \left(|u(t,\cdot)|^{p-1}+|v(t,\cdot)|^{p-1}\right)^2|u(t,\cdot)-v(t,\cdot)|^2.
$$
Consequently, by the H\"{o}lder inequality, we get
$$
\|(f(u)-f(v))(t, \cdot)\|_2^2 \lesssim \left(\|u(t, \cdot)\|_{2 p}^{p-1}+\|v(t, \cdot)\|_{2 p}^{p-1}\right)^2\|(u-v)(t, \cdot)\|_{2 p}^2,
$$
since $\frac{1}{\frac{p}{p-1}}+\frac{1}{p}=1$. Now, by using the Gagliardo-Nirenberg inequality \eqref{gagliardo ineq} and the Young's inequality
$$x^{\alpha}y^{1-\alpha}\leq \alpha x+(1-\alpha)y,\quad \text{for } 0\leq \alpha\leq 1\quad \text{and }x,y\geq0,$$
 we obtain
\begin{align}\label{estimate of f}
\|(f(u) -f(v))(t, \cdot) \|_2 &\lesssim \left[\left(\|\mathcal{L}^{\sigma/2}u(t,\cdot)\|^{\alpha}_{2}\|u\|_{2}^{1-\alpha} \right)^{p-1}+\left(\|\mathcal{L}^{\sigma/2}v(t,\cdot)\|^{\alpha}_{2}\|v(t,\cdot)\|_{2}^{1-\alpha} \right)^{p-1}\right]\nonumber\nonumber\\
& \times\left(\|\mathcal{L}^{\sigma/2}(u-v)(t,\cdot)\|^{\alpha}_{2}\|(u-v)(t,\cdot)\|_{2}^{1-\alpha} \right)\nonumber\\
&\lesssim \left[\left(\left\|\mathcal{L}^{\sigma/ 2} u(t, \cdot)\right\|_2+\|u(t, \cdot)\|_2\right)^{p-1}+\left(\left\|\mathcal{L}^{\sigma / 2} v(t, \cdot)\right\|_2+\|v(t, \cdot)\|_2\right)^{p-1}\right]\nonumber\\
& \times\left(\left\|\mathcal{L}^{\sigma/ 2}(u-v)(t, \cdot)\right\|_2+\|(u-v)(t, \cdot)\|_2\right).\nonumber
\end{align}
Now, using the above estimate for $v=0$ and the estimates from Theorem \ref{thm:expdecay}  for $u_{0}=0$ and $u_{1}=f(u)$, we get
\begin{multline}
\|\partial_{t}^{i}\mathcal{L}^{j}u^{\operatorname{non}}(t)\|_{2}=\left\|\partial_{t}^{i}\mathcal{L}^{j}\int_{0}^{t}E_{1}(t-\tau)*_{\mathcal{L}}f(u)(t)\mathrm{d}\tau\right\|_{2}\leq \int_{0}^{t}e^{-\delta(t-\tau)}\left\|f(u)(\tau)\right\|_{2}\mathrm{d}\tau\\
    \leq e^{-\delta t}\int_{0}^{t}e^{-\delta (p-1)\tau}\mathrm{d}\tau \|u\|^{p}_{X}\leq e^{-\delta t} \|u\|^{p}_{X}.\nonumber
\end{multline}
Combining the above estimate with \eqref{fineqn0}, we get
\begin{equation}\label{nunorm}
    \|\mathcal{N}u\|_{X}\leq A\|(u_{0},u_{1})\|_{\mathcal{A}^{\sigma,\theta}}+A\|u\|_{X}^p,
\end{equation}
for some positive constant $A$.
\medskip

\medskip

\noindent\textbf{Estimate for $\|\mathcal{N}u-\mathcal{N}v\|_{X}$:} For 
$u, v\in X$, the Definition \ref{nudef} yields
\begin{equation*}
	\mathcal{N}u(t)-\mathcal{N}v(t)=u^{\mathrm{lin}}(t)-v^{\mathrm{lin}}(t)+ u^{\mathrm{non}}(t)-v^{\mathrm{non}}(t).
\end{equation*}
Verifying that $\|u^{\mathrm{lin}}-v^{\mathrm{lin}}\|_{X}=0$ is straightforward.
We must now estimate the following norms in order to calculate the norm $\left\|u^{\operatorname{non}}-v^{\operatorname{non}}\right\|_{X}$:
\begin{equation*}
	\|\partial_{t}^{i}\mathcal{L}^{j}(u^{\operatorname{non}}-v^{\operatorname{non}})(t)\|_{2},\quad (i,j)\in\{(0,0),(0,\sigma/2),(1,0)\}.
\end{equation*}
Utilizing the estimate analogous to the estimate $\left\|\partial_{t}^{i}\mathcal{L}^{j}u^{\operatorname{non}}(t,\cdot)\right\|_{2}$, we get
\begin{align*}
\left\|\partial_{t}^{i}\mathcal{L}^{j}\left(u^{\operatorname{non}}-v^{\operatorname{non}}\right)(t)\right\|_{2} \lesssim  \int_{0}^{t}e^{-\delta(t-\tau)}\|f(u)-f(v)\|_{2}d\tau\lesssim e^{-\delta t}\|u-v\|_{X}\left(\|u\|_{X}^{p-1}+\|v\|^{p-1}_{X}\right).
\end{align*}
Hence, we get
\begin{align*}
\left\|\partial_{t}^{i}\mathcal{L}^{j}\left(u^{\operatorname{non}}-v^{\operatorname{non}}\right)(t)\right\|_{2} \lesssim \|u-v\|_{X}\left(\|u\|_{X}^{p-1}+\|v\|^{p-1}_{X}\right).
\end{align*}
Thus, combining the above estimate with $\|u^{\mathrm{lin}}-v^{\mathrm{lin}}\|_{X}=0$, we have
\begin{equation}\label{numinusnv}
    \|\mathcal{N}u-\mathcal{N}v\|_{X}\leq B\|u-v\|_{X}\left(\|u\|_{X}^{p-1}+\|v\|^{p-1}_{X}\right),
\end{equation}
for some positive constant $B>0$.

Let us set $R:=rA(\|(u_{0},u_{1})\|_{\mathcal{A}^{\sigma,\theta}})$ for some $r>1$ with sufficiently small initial Cauchy data $\|(u_{0},u_{1})\|_{\mathcal{A}^{\sigma,\theta}}< \varepsilon$  such that
\begin{equation*}
    AR^p<\frac{R}{r} \quad \text{and} \quad 2BR^{p-1}<\frac{1}{r}.
\end{equation*}
Then \eqref{nunorm} and \eqref{numinusnv} reduces to  
\begin{align}
	\|\mathcal{N} u\|_{X} \leq  \frac{2R}{r} \quad\text{and } \quad \|\mathcal{N}u-\mathcal{N}v\|_{X} \leq  \frac{1}{r}\|u-v\|_{X},\nonumber
\end{align}  
respectively for all $u, v\in \mathscr{B}_{R}:=\{u\in X: \|u\|_{X}\leq R\}$. As a closed ball of Banach space $X$, $\mathscr{B}_{R}$ is a Banach space  itself. This allows us to conclude the proof of the theorem using the Banach fixed theorem.
\end{proof}

\section{Final remarks}
We wrap up with some observations and notes concerning the results acquired in this study:
\begin{enumerate}
\item The presence of a discrete spectrum enables us to uniformly bound numerous terms, a feature that is generally not available in settings with a continuous spectrum, such as the classical wave equation or the wave equation on the Heisenberg group. This key difference facilitates the derivation of exponential decay estimates.
    \item The decay estimates derived in Theorem \ref{thm:case0new} strengthen those presented in \cite[Proposition 2.1]{palmieri-1}, facilitated by a more refined partition of $\mathcal{I}$ in comparison to \cite{palmieri-1}. 

\item The decay estimates in Theorem \ref{thm:expdecay} remain valid for higher-order spatial and temporal derivatives, allowing the construction of global solutions with enhanced regularity compared to those in Theorem \ref{global:existence:l2}. In this higher-regularity framework, however, the admissible range of parameters becomes more constrained: in certain cases, both the spatial dimension (or, equivalently, the regularity index $\sigma$) and the nonlinearity exponent $p$ are restricted by a  Gagliardo--Nirenberg inequality.
 \item In Theorem \ref{global:existence:l2}, if we drop the assumption that $\mathcal{L}$ is strictly positive operator then we can only expect \textit{local} well-posedness of solution due to lack of decay  observed in Theorem  \ref{thm:case0old}.  After local existence is established, the study naturally turns to long-term behavior, such as whether the solution may be extended worldwide or blows up finitely.
    \end{enumerate}
\section{Acknowledgement}
AT is supported by Institute Post-Doctoral fellowship from Tata Institute of Fundamental Research, Centre For Applicable Mathematics, Bangalore, India.
\bibliographystyle{alphaabbr}
\bibliography{time-fractional}
\end{document}